\documentclass[12pt]{amsart}
\usepackage{fullpage, amsmath, amssymb, amsfonts, amsthm, stmaryrd, url, hyperref, combelow}
\usepackage[all]{xy}

\newtheorem{theorem}{Theorem}[section]
\newtheorem{lemma}[theorem]{Lemma}
\newtheorem{cor}[theorem]{Corollary}

\newtheorem{prop}[theorem]{Proposition}
\theoremstyle{definition}
\newtheorem{defn}[theorem]{Definition}
\newtheorem{hypothesis}[theorem]{Hypothesis}

\newtheorem{remark}[theorem]{Remark}
\newtheorem{convention}[theorem]{Convention}

\numberwithin{equation}{theorem}

\newcommand{\FF}{\mathbb{F}}

\newcommand{\Qp}{\mathbb{Q}_p}
\newcommand{\QQ}{\mathbb{Q}}
\newcommand{\RR}{\mathbb{R}}

\newcommand{\ZZ}{\mathbb{Z}}

\newcommand{\calH}{\mathcal{H}}

\newcommand{\calM}{\mathcal{M}}

\newcommand{\calR}{\mathcal{R}}

\newcommand{\gothm}{\mathfrak{m}}
\newcommand{\gotho}{\mathfrak{o}}

\DeclareMathOperator{\bd}{bd}
\DeclareMathOperator{\Frac}{Frac}

\DeclareMathOperator{\inte}{int}
\DeclareMathOperator{\Maxspec}{Maxspec}

\DeclareMathOperator{\Spa}{Spa}

\begin{document}

\title{Noetherian properties of Fargues-Fontaine curves}
\author{Kiran S. Kedlaya}
\address{Department of Mathematics, University of California, San Diego, La Jolla, CA 92093, USA}
\email{kedlaya@ucsd.edu}
\date{July 6, 2015}

\begin{abstract}
We establish that the extended Robba rings associated to a perfect nonarchimedean field of characteristic $p$, which arise in $p$-adic Hodge theory
as certain completed localizations of the ring of Witt vectors, are strongly noetherian Banach rings; that is, the completed polynomial ring in any number of variables over such a Banach ring is noetherian. This enables Huber's theory of adic spaces to be applied to such rings. We also establish that rational localizations of these rings are principal ideal domains
and that \'etale covers of these rings (in the sense of Huber) are Dedekind domains.
\end{abstract}

\maketitle

\section{Introduction}

The field of $p$-adic Hodge theory has recently been transformed by a series of new geometric ideas. Central among these is the reformulation of the basic theory by Fargues and Fontaine \cite{fargues-fontaine} (see also \cite{fargues}, \cite{fargues-fontaine-durham}, \cite{part1}) in terms of vector bundles on certain noetherian schemes associated to perfect nonarchimedean fields of characteristic $p$. While these schemes are not of finite type over a field, they have certain formal properties characteristic of proper curves; for instance, their Picard groups surject canonically onto $\ZZ$.

The so-called \emph{Fargues-Fontaine curves} also admit canonical analytifications; more precisely, to each Fargues-Fontaine curve, one can functorially associate an object in Huber's category of adic spaces \cite{huber} which maps back to the original scheme in the category of locally ringed spaces. The pullback functor on vector bundles induced by this morphism is an equivalence of categories \cite[\S 8]{part1}; this constitutes a version of the GAGA principle.

One expects a similar result for coherent sheaves, but in order to build a theory of 
coherent sheaves on adic spaces, one must restrict to spaces satisfying certain noetherian hypotheses. Some care is needed because there is no analogue of the general Hilbert basis theorem for noetherian Banach rings: if $A$ is such a ring, then Tate algebras over $A$ (completion of polynomial rings over $A$ for the Gauss norm) are not known to be noetherian. One must thus consider adic spaces which locally arise from Banach rings for which the Tate algebras are all noetherian (i.e., these rings are \emph{strongly noetherian}). For such spaces, a good theory of coherent sheaves can be constructed by imitating the work of Tate and Kiehl in the case of rigid analytic spaces, as presented in \cite{bgr}; the analogue of Tate's acyclicity theorem is due to Huber
\cite[Theorem~2.5]{huber2}, while the analogue of Kiehl's glueing theorem will appear in an upcoming sequel to \cite{part1} (but see \cite[Theorem~2.7.7]{part1} for the special case of vector bundles).

In this paper, we establish the strongly noetherian property for the rings used to build the adic Fargues-Fontaine curves (Theorem~\ref{T:strongly noetherian Robba},
Theorem~\ref{T:strongly noetherian Robba2}).
These rings, which are derived from the Witt vectors over a perfect field which is complete with respect to a multiplicative norm, appear frequently in $p$-adic Hodge theory as \emph{extended Robba rings} (e.g., see \cite{part1}).
We also establish some finer properties of these rings: any rational localization is a finite direct sum of principal ideal domains (Theorem~\ref{T:strong PID}), and any \'etale covering in the sense of Huber is a finite direct sum of Dedekind domains
(Theorem~\ref{T:Dedekind}).
These statements are suggested by the origin of these rings as completions of local coordinate rings of the Fargues-Fontaine curves, which are regular noetherian schemes of dimension 1 (as shown in \cite{fargues-fontaine}).
It should be possible to go further in this direction by extending the analogy between these rings and one-dimensional affinoid algebras; see Remark~\ref{R:additional properties} for some suggestions. One can also establish an analogue of the GAGA principle for the analytification morphism between the adic and schematic Fargues-Fontaine curves;
this will also appear in a sequel to \cite{part1} (but see \cite[Theorem~8.7.7]{part1} for the special case of vector bundles).

The proof of the strongly noetherian property (Theorem~\ref{T:strongly noetherian Robba}) may be of some independent interest: it uses a form of the theory of \emph{Gr\"obner bases} which has appeared in some of our papers \cite{kedlaya-rigid-finiteness, kedlaya-liu-families} but may otherwise not be widely known to rigid analytic geometers.
For example, it can be used to recover a proof of the usual noetherian property for classical affinoid algebras distinct from the usual proof based on Weierstrass division
\cite[Theorem~5.2.6/1]{bgr}. However, it is not yet apparent to what extent Gr\"obner bases can be used to establish a general analogue of the Hilbert basis theorem for commutative nonarchimedean Banach rings.

\subsection*{Acknowledgments}
The author was supported by NSF grant DMS-1101343,
and thanks MSRI for its hospitality during fall 2014
as supported by NSF grant DMS-0932078. Thanks also to Laurent Fargues and Peter Wear for helpful discussions.

\section{Euclidean division for Witt vectors}
\label{sec:notations}

We begin by recalling the basic setup, fixing notations,
and reviewing the Euclidean division algorithm for certain rings of Witt vectors.

\begin{hypothesis}
Throughout this paper, 
let $p$ be a fixed prime, let $q$ be a power of $p$,
let $L$ be a perfect field containing $\FF_q$ which is complete with respect to the nontrivial multiplicative nonarchimedean norm $\left| \bullet \right|$,
let $E$ be a complete discretely valued field whose residue field contains $\FF_q$,
and fix a uniformizer $\varpi \in E$. Note that we allow $E$ to be of characteristic $p$;
this case is excluded in \cite{kedlaya-witt} and \cite{part1}, but in the few cases where we cite arguments that exclude this case, the replacement argument is more elementary.
\end{hypothesis}

\begin{defn}
Let $\gotho_L, \gotho_E$ denote the valuation subrings of $L,E$.
For any perfect $\FF_q$-algebra $R$, write $W(R)_E$ for the tensor product
$W(R) \otimes_{W(\FF_q)} \gotho_E$, where $W(R)$ is the usual ring of $p$-typical Witt vectors. If $E$ is of characteristic $p$, then 
$W(R)_E \cong R \otimes_{\FF_q} E$.

Define the rings
\[
A_{L,E} = W(\gotho_L)_E[[\overline{x}]: \overline{x} \in L], \qquad
B_{L,E} = A_{L,E} \otimes_{\gotho_E} E.
\]
Note that each element of $A_{L,E}$ (resp.\ $B_{L,E}$)
can be written uniquely in the form $\sum_{n \in \ZZ} \varpi^n [\overline{x}_n]$
for some $\overline{x}_n \in L$ which are zero for $n<0$ (resp.\ for $n$ sufficiently small) and bounded for $n$ large.
For $t \in [0, +\infty)$, define the ``Gauss norm'' function $\lambda_t: B_{L,E} \to \RR$ by the formula
\begin{equation} \label{eq:Gauss norm formula}
\lambda_t \left( \sum_{n \in \ZZ} \varpi^n [\overline{x}_n] \right) = \max\{
p^{-n} \left| \overline{x}_n \right|^t
\},
\end{equation}
interpreting $0^t = 0$ in the case $t=0$, so that $\lambda_0$ is the $\varpi$-adic absolute value.
\end{defn}

\begin{lemma} \label{L:Gauss norms}
For $t \in [0, +\infty)$, the function $\lambda_t$ defines a multiplicative norm on
$B_{L,E}$.
\end{lemma}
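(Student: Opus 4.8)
The plan is to establish the three defining properties in turn — positive‑definiteness, the (ultrametric) triangle inequality $\lambda_t(f+g)\le\max\{\lambda_t(f),\lambda_t(g)\}$, and multiplicativity $\lambda_t(fg)=\lambda_t(f)\lambda_t(g)$ — of which only the lower bound $\lambda_t(fg)\ge\lambda_t(f)\lambda_t(g)$ is not essentially formal. First I would dispose of the bookkeeping: uniqueness of the expansion $\sum_n\varpi^n[\overline x_n]$ forces $\lambda_t(f)=0$ only for $f=0$, and the stated growth/decay hypotheses on the $\overline x_n$ guarantee that the supremum in \eqref{eq:Gauss norm formula} is attained, so $\lambda_t$ is genuinely $\RR_{\ge 0}$-valued; one also checks directly that $\lambda_t(1)=1$ and $\lambda_t(\varpi^k[\overline c]f)=p^{-k}|\overline c|^t\lambda_t(f)$ for $\overline c\in L$, whence (since $B_{L,E}=A_{L,E}[\varpi^{-1}]$, and both sides of the remaining identities transform predictably under multiplication by powers of $\varpi$) it is enough to treat $f,g\in A_{L,E}$. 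When $E$ has characteristic $p$ the identity $[\overline x]+[\overline y]=[\overline x+\overline y]$ shows that addition and multiplication of expansions are carried out coefficientwise, and everything is immediate from the multiplicativity of $|\bullet|$ on $L$; so I assume from now on that $E$ has characteristic $0$, and to fix ideas I describe the argument for $E=\Qp$, so that $\varpi=p$ and $\sum_j p^j[\overline x_j]$ is identified with the Witt vector $(\overline x_j^{\,p^j})_j$.

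The computational input is the behaviour of carries in Witt addition. Writing $f+g=\sum_n\varpi^n[\overline z_n]$, the coefficient $\overline z_n$ depends only on the $\overline x_j,\overline y_j$ with $j\le n$ (reduce modulo $\varpi^{n+1}$, i.e.\ work in truncated Witt vectors), and in fact $\overline z_n=S_n(\overline x_0,\dots,\overline x_n^{\,p^n};\overline y_0,\dots,\overline y_n^{\,p^n})^{1/p^n}$, where $S_n$ is the $n$-th Witt addition polynomial: it has integer coefficients and is isobaric of weight $p^n$ when $X_j,Y_j$ are assigned weight $p^j$. Feeding the bounds $|\overline x_j|^t\le p^j\lambda_t(f)$ and $|\overline y_j|^t\le p^j\lambda_t(g)$ into $S_n$ one monomial at a time, and using that for a monomial $c\prod_j X_j^{a_j}Y_j^{b_j}$ of $S_n$ one has $\sum_j j(a_j+b_j)p^j\le n\sum_j(a_j+b_j)p^j=np^n$ (valid because $S_n$ involves only the variables indexed by $j\le n$), one gets $p^{-n}|\overline z_n|^t\le\max\{\lambda_t(f),\lambda_t(g)\}$, which is the triangle inequality. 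Moreover this combinatorial inequality is strict except for the two ``naive'' monomials $X_n$ and $Y_n$ of $S_n$, so $\overline z_n=\overline x_n+\overline y_n$ plus a term of strictly smaller $\lambda_t$-size: all carries are strictly subordinate. Submultiplicativity now follows formally, since $fg=\sum_m\varpi^m([\overline x_m]g)$ with $[\overline x_m]g=\sum_n\varpi^n[\overline x_m\overline y_n]$, so that $\lambda_t(\varpi^m[\overline x_m]g)=p^{-m}|\overline x_m|^t\lambda_t(g)\le\lambda_t(f)\lambda_t(g)$ and the sum over $m$ converges $\varpi$-adically with tails of vanishing $\lambda_t$-size.

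For the reverse inequality I would pass to the associated graded ring $\Gr^{\lambda_t}B_{L,E}$ for the filtration by $\lambda_t$-size. The strictness of all carries — for addition directly, and for multiplication after writing $fg=\sum\varpi^{m+n}[\overline x_m\overline y_n]$ and renormalizing — means that $\Gr^{\lambda_t}B_{L,E}$ is the ``naive'' graded ring, obtained by adjoining an invertible variable $\varpi$ (of degree $p^{-1}$) to the graded ring $\Gr^{|\bullet|}L$ of $L$ for its norm (with the grading on the latter rescaled by $t$). Since $|\bullet|$ is multiplicative, $\Gr^{|\bullet|}L$ is an integral domain (a twisted group ring over the residue field of $L$ on the torsion-free value group), hence so is $\Gr^{\lambda_t}B_{L,E}$; and a submultiplicative ultrametric norm whose associated graded ring is an integral domain is automatically multiplicative (the images of nonzero $f,g$ are nonzero of degrees $\lambda_t(f),\lambda_t(g)$, their product is nonzero of degree $\lambda_t(f)\lambda_t(g)$, and equals the image of $fg$). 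This finishes the case $E=\Qp$. The general characteristic-$0$ case goes the same way with the addition laws for $W(-)_E$ in place of the classical Witt polynomials; I expect the main obstacle to be verifying the analogue of the isobaric/monomial estimate above in the presence of ramification of $E$ — though it may be possible either to reduce to $E=\Qp$ by base change or simply to invoke \cite{kedlaya-witt}.
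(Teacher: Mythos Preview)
Your argument is correct and follows exactly the route the paper signals: the ``homogeneity properties of Witt vector arithmetic'' referenced in the paper are precisely the isobaric property of the addition polynomials that you exploit, and the passage through the associated graded to upgrade submultiplicativity to multiplicativity is the standard way to package this. So the approach matches; you have simply written out what the paper delegates to \cite[\S4]{kedlaya-witt}.

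One remark on the general-$E$ case, which you flag as uncertain: the concern about ramification is unfounded, and in fact you do not need the explicit Witt polynomials at all. The only input required is that if $[\overline a]+[\overline b]=\sum_n\varpi^n[\overline d_n]$ in $W(\gotho_L)_E$, then $\overline d_n(\lambda\overline a,\lambda\overline b)=\lambda\,\overline d_n(\overline a,\overline b)$ for $\lambda\in\gotho_L$. This follows immediately from multiplicativity of the Teichm\"uller map and uniqueness of the $\varpi$-adic expansion, since $[\lambda\overline a]+[\lambda\overline b]=[\lambda]\bigl([\overline a]+[\overline b]\bigr)=\sum_n\varpi^n[\lambda\overline d_n]$. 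This degree-$1$ homogeneity already forces $|\overline d_n|\le\max(|\overline a|,|\overline b|)$ (scale so that the larger of $|\overline a|,|\overline b|$ becomes $1$ and observe that everything lands in $W(\gotho_L)_E$), which is all you need for the triangle inequality and the strict-carry claim; your associated-graded argument for multiplicativity then goes through verbatim. So the case $E=\Qp$ is not actually special.
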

\begin{proof}
This is a straightforward consequence of the homogeneity properties of Witt vector arithmetic. See for instance \cite[\S 4]{kedlaya-witt}.
\end{proof}

For the remainder of \S\ref{sec:notations}, fix some $r>0$.

\begin{defn}
Let $A^r_{L,E}$ be the completion of $A_{L,E}$ with respect to $\lambda_r$,
and put $B^r_{L,E} = A^r_{L,E}[\varpi^{-1}]$.
Note that $A^r_{L,E}$ maps into $W(L)_E$; more precisely, if we write an arbitrary element $x \in W(L)_E$
as a $p$-adically convergent sum $\sum_{n=0}^\infty \varpi^n [\overline{x}_n]$,
then $x \in A^r_{L,E}$ if and only if $p^{-n} \left| \overline{x}_n \right|^r \to 0$ as $n \to \infty$. 
Moreover, the formula \eqref{eq:Gauss norm formula} continues to hold for $x \in A^r_{L,E}$ and $t \in [0,r]$.
Consequently, in the case $E = \QQ_p$, the rings $A^r_{L,E}, B^r_{L,E}$ coincides with the rings denoted 
$\tilde{\calR}^{\inte,r}_{L}, \tilde{\calR}^{\bd,r}_L$ in \cite{part1}.
\end{defn}

\begin{defn}
For $x = \sum_{n \in \ZZ} \varpi^n [\overline{x}_n] \in B^r_{L,E}$ 
nonzero, define the \emph{Newton polygon} of $x$
as the portion of the boundary of the convex hull of the set
\[
\bigcup_{n \in \ZZ} \{(x,y) \in \RR^2: x \leq \log_p \left| \overline{x}_n \right|,
 y \geq n\},
\]
with slopes in the range $(0,r]$. 
For $t \in (0,r]$, the \emph{multiplicity} of $t$ in (the Newton polygon of) $x$ is the height of the segment of the Newton polygon of $x$ lying on a line of slope $t$, or 0 if no such segment exists; note that this quantity is always a nonnegative integer.

For $x \in A^r_{L,E}$, we define the \emph{degree} of $x$, denoted $\deg(x)$, to be the largest $n$ realizing $\lambda_r(x) = \max_n\{p^{-n} \left| \overline{x}_n \right|^r\}$, or equivalently, the sum of the $p$-adic valuation of $x$ plus the multiplicities of all slopes of $x$. By convention, we also put $\deg(0) = -\infty$. 
\end{defn}

\begin{lemma} \label{L:additivity of slopes}
For $x_1,x_2 \in A^r_{L,E}$ nonzero and $t \in (0, r]$, the multiplicity of $t$ in (resp.\ the degree of) $x_1x_2$ is the sum of the multiplicities of $t$ in (resp.\ the degrees of) $f_1$ and $f_2$.
\end{lemma}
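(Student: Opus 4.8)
The plan is to compress all of the Newton polygon data of a nonzero $x = \sum_n \varpi^n[\overline{x}_n] \in A^r_{L,E}$ into the single function $\mu_x\colon [0,\infty) \to \RR$ defined by $\mu_x(t) = -\log_p \lambda_t(x)$, and to reduce the lemma to the multiplicativity of the norms $\lambda_t$. Writing $v(\overline{a}) = -\log_p \lvert \overline{a} \rvert \in \RR \cup \{+\infty\}$, formula \eqref{eq:Gauss norm formula} (valid on $A^r_{L,E}$ for $t \in [0,r]$) gives
\[
\mu_x(t) = \min_{n}\bigl\{\, n + t\,v(\overline{x}_n) \,\bigr\}.
\]
First I would check that this right-hand side is finite for every $t \in [0,\infty)$, not just for $t \le r$: the coefficients $\overline{x}_n$ are bounded, so $v(\overline{x}_n)$ is bounded below, hence $n + t\,v(\overline{x}_n) \to +\infty$ and the minimum is attained on a finite set of indices; being an infimum of affine functions of $t$, $\mu_x$ is concave and piecewise affine with finitely many breakpoints on each compact interval. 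I would take this formula as the \emph{definition} of $\mu_x(t)$ for $t > r$, where $\lambda_t$ itself is not defined on $A^r_{L,E}$. The essential input is that $\mu_x$ is additive, $\mu_{x_1 x_2} = \mu_{x_1} + \mu_{x_2}$: for $t \in [0,r]$ this is exactly the multiplicativity of $\lambda_t$ (Lemma~\ref{L:Gauss norms}, whose proof applies verbatim to $A^r_{L,E}$, or via \cite[\S 4]{kedlaya-witt}), and for $t > r$ it follows by continuity in $t$ from that range together with the piecewise-affine structure (or directly from additivity of slopes for finite Witt vectors).

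Next I would set up the dictionary between $\mu_x$ and the Newton polygon. The corners of the convex hull defining the Newton polygon are the points $(-v(\overline{x}_n), n)$, and a line of slope $t_0 \in (0,r]$ supports this hull from the lower right exactly along the value $\mu_x(t_0)$, meeting it at the corners indexed by $N(t_0) := \{\, n : n + t_0\,v(\overline{x}_n) = \mu_x(t_0) \,\}$. On $N(t_0)$ one has $n = \mu_x(t_0) - t_0\,v(\overline{x}_n)$, so $n$ and $v(\overline{x}_n)$ vary inversely; consequently the segment of the Newton polygon on this line runs between the corners with $n = \min N(t_0) = \mu_x(t_0) - t_0\,\mu_x'(t_0^-)$ and $n = \max N(t_0) = \mu_x(t_0) - t_0\,\mu_x'(t_0^+)$, and its height is
\[
\max N(t_0) - \min N(t_0) = t_0\bigl(\mu_x'(t_0^-) - \mu_x'(t_0^+)\bigr).
\]
Thus the multiplicity of $t_0$ in $x$ equals $t_0$ times the size of the kink of the concave function $\mu_x$ at $t_0$, which is automatically $0$ when $\mu_x$ is differentiable there, i.e. when no such segment exists. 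In the same way $\deg(x) = \max N(r) = \mu_x(r) - r\,\mu_x'(r^+)$, where $\mu_x'(r^+)$ refers to the extended $\mu_x$; combined with $\mu_x(0) = v_\varpi(x) := \min\{\, n : \overline{x}_n \neq 0 \,\}$ (since $\lambda_0$ is the $\varpi$-adic norm), this recovers the alternative description of $\deg$ from the Definition as $v_\varpi(x)$ plus the sum of the multiplicities of all slopes of $x$.

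With the dictionary in place the lemma is immediate. Since $\mu_{x_1 x_2} = \mu_{x_1} + \mu_{x_2}$, the kink of $\mu_{x_1 x_2}$ at any $t_0 \in (0,r]$ is the sum of the kinks of $\mu_{x_1}$ and $\mu_{x_2}$ there, so multiplying by $t_0$ gives additivity of multiplicities; in particular, if $x_1 x_2$ has no segment of slope $t_0$ then neither does $x_1$ nor $x_2$, because the kinks are nonnegative. For degrees one uses either $\deg(x) = \mu_x(r) - r\,\mu_x'(r^+)$ together with additivity of $\mu$ and of its right derivative, or the formula $\deg(x) = v_\varpi(x) + \sum_{t_0 \in (0,r]}(\text{multiplicity of } t_0 \text{ in } x)$ together with additivity of $v_\varpi$ (multiplicativity of $\lambda_0$) and the multiplicity additivity just proved. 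I expect the only genuine work to be the bookkeeping in the middle step — matching the convex-hull boundary to the one-sided derivatives of $\mu_x$, and justifying the harmless extension of $\mu_x$ past $t = r$ needed to read off $\deg$ at the endpoint — with everything else being a formal consequence of the multiplicativity of the Gauss norms.
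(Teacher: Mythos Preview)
Your strategy is exactly the convex duality the paper's one-line proof invokes, and the dictionary between the Newton polygon and the concave function $\mu_x(t) = -\log_p \lambda_t(x)$ is set up correctly. There is, however, one false step: you claim that for $x = \sum_n \varpi^n[\overline{x}_n] \in A^r_{L,E}$ the coefficients $\overline{x}_n$ are bounded, and use this to extend $\mu_x$ past $t = r$ and thereby make sense of $\mu_x'(r^+)$. Boundedness holds for elements of $A_{L,E}$ but not after completion: membership in $A^r_{L,E}$ only requires $p^{-n}\lvert\overline{x}_n\rvert^r \to 0$, which permits $\lvert\overline{x}_n\rvert \to \infty$. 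For instance, if one chooses $\overline{x}_n$ with $v(\overline{x}_n)$ close to $-n/r + \sqrt{n}$, then $n + r\,v(\overline{x}_n) \approx r\sqrt{n} \to \infty$, so $x \in A^r_{L,E}$; yet $n + t\,v(\overline{x}_n) \approx n(1-t/r) + t\sqrt{n} \to -\infty$ for every $t > r$, so your extended $\mu_x(t)$ is $-\infty$ there and $\mu_x'(r^+)$ is undefined. Both of your proposed routes to additivity of $\deg$ pass through $\mu_x'(r^+)$ (the sum-of-multiplicities formula still requires the multiplicity at the endpoint $t_0 = r$, hence the right derivative), so the gap is not bypassed.

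The repair is the one you already gesture at with ``finite Witt vectors'': truncate. Choose $x_i' = \sum_{n \le N} \varpi^n[\overline{x}_{i,n}]$ with $N$ large enough that $\lambda_r(x_i - x_i') < \lambda_r(x_i)$; then also $\lambda_r(x_1 x_2 - x_1' x_2') < \lambda_r(x_1 x_2)$, and by Remark~\ref{R:height} (together with its evident analogue for the full Newton polygon on $(0,r]$) the degrees and multiplicities of $x_i$ and $x_1 x_2$ coincide with those of the finite sums $x_i'$ and $x_1' x_2'$. For the latter your extension of $\mu$ to all $t \ge 0$ is legitimate, $\lambda_t$ is genuinely multiplicative on such elements for every $t$, and your argument runs without change.
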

\begin{proof}
This follows from the multiplicative property of the norms $\lambda_t$ together with convex duality. We omit further details.
\end{proof}

The ring $A^r_{L,E}$ admits a Euclidean division algorithm as described in \cite[Lemma~2.6.3]{kedlaya-revisited}. However, we opt to give a self-contained proof for several reasons.
The level of generality in \cite{kedlaya-revisited} is at once too high (there are intended applications in which one considers somewhat smaller rings)
and too low (the field $E$ therein is forced to be of characteristic $0$)
to match our setup here.
In addition, there are a number of minor but confusing errors in the presentation
in \cite{kedlaya-revisited}; we have corrected these in the arguments that follow.
(See \cite[\S 4.2]{part1} for errata in the context of \cite{kedlaya-revisited}.)

\begin{remark} \label{R:height}
Note that for $x,y \in A^r_{L,E}$ such that $\lambda_r(x-y) < \lambda_r(x)$, we have $\deg(x) = \deg(y)$. This observation indicates that if one is willing to neglect lower-order terms, then degrees in our sense behave like the degrees of ordinary polynomials.
\end{remark}

\begin{lemma} \label{L:division algorithm1}
For $x \in A^r_{L,E}$ nonzero, there exists $\epsilon \in (0,1)$
with the following property: for any $y \in A^r_{L,E}$, we can write 
$y = z x + w $ for some $z, w \in A^r_{L,E}$ obeying the following conditions.
\begin{enumerate}
\item[(a)]
We have $\lambda_r(w) \leq \lambda_r(y)$.
\item[(b)]
If $\lambda_r(w) > \epsilon \lambda_r(y)$, then
$\deg(w) < \deg(x)$.
\end{enumerate}
\end{lemma}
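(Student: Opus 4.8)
The plan is to prove this by a Euclidean division algorithm that eliminates the top-degree Teichm\"uller coefficient of $y$ one step at a time. First I would normalize $x$. Since $x\neq 0$ the coefficient $\overline{x}_{\deg x}$ is a unit in $L$, so after multiplying $x$ by the unit $[\overline{x}_{\deg x}^{-1}]\in A^r_{L,E}$ (harmless, since multiplying $x$ by a unit does not change $\deg(x)$ and only alters $z$ by a unit) the leading term of $x$ is $\varpi^d$ with $d=\deg(x)$. The terms of index $>d$ then form $\varpi^d\eta$ with $\lambda_r(\eta)<1$, and $1+\eta$ is a unit of the $\lambda_r$-complete ring $A^r_{L,E}$; multiplying by its inverse we may assume
\[
x=\varpi^d+\zeta,\qquad \deg(\zeta)\le d-1,\qquad \lambda_r(\zeta)\le\lambda_r(x)=p^{-d}.
\]
If $d=0$ then $\zeta=0$, $x$ is a unit, and any $\epsilon$ works, so assume $d\ge 1$.

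Next I would set up the elimination step. Given $v\in A^r_{L,E}$ with $e:=\deg(v)\ge d$, put $z'=\varpi^{e-d}[\overline{v}_e]$. By Lemma~\ref{L:additivity of slopes}, $\lambda_r(z'x)=\lambda_r(v)$ and $z'x=\varpi^{e}[\overline{v}_e]+z'\zeta$ with $\lambda_r(z'\zeta)\le\lambda_r(v)$ and $\deg(z'\zeta)=(e-d)+\deg(\zeta)\le e-1$. The elementary point is that $v-\varpi^{e}[\overline{v}_e]$ is obtained simply by deleting the index-$e$ coefficient from the Teichm\"uller expansion of $v$ (a $\lambda_r$-convergent sum of Teichm\"uller terms at distinct indices is already in normal form, so no carries arise), hence $\lambda_r(v-\varpi^e[\overline{v}_e])\le\lambda_r(v)$, with degree $<e$ when equality holds. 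Combining, $v-z'x=(v-\varpi^e[\overline{v}_e])-z'\zeta$ satisfies $\lambda_r(v-z'x)\le\lambda_r(v)$, and if $\lambda_r(v-z'x)=\lambda_r(v)$ then $\deg(v-z'x)<e$; the last implication uses the inequality $\deg(a+b)\le\max(\deg a,\deg b)$ valid whenever $\lambda_r(a+b)=\max(\lambda_r(a),\lambda_r(b))$, which follows from the convexity of $t\mapsto\log_p\lambda_t$ exactly as in Lemma~\ref{L:additivity of slopes}. Thus every step either strictly decreases $\lambda_r$, or, at fixed $\lambda_r$, strictly decreases the degree.

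I would then iterate, starting from $y_0=y$ and forming $y_{i+1}=y_i-z_ix$ as long as $\deg(y_i)\ge d$ and $\lambda_r(y_i)>\epsilon\lambda_r(y)$, stopping otherwise and setting $z=\sum_i z_i$ (a finite sum) and $w=y_N$. Property~(a) is immediate. For (b): if we stopped because $\deg(w)<d$ then $\deg(w)<\deg(x)$ unconditionally; if we stopped because $\lambda_r(w)\le\epsilon\lambda_r(y)$, the hypothesis of (b) is false and (b) holds vacuously. In the \emph{benign case} $\lambda_r(\zeta)<\lambda_r(x)$ one can take $\epsilon=\lambda_r(\zeta)/\lambda_r(x)$ and even avoid iterating: for $\deg(y)\ge d$ a single step with $z_0=\sum_{n\ge d}\varpi^{n-d}[\overline{y}_n]$ gives $w=y-z_0x=y_{<d}-z_0\zeta$ with $\deg(y_{<d})<d$ and $\lambda_r(z_0\zeta)=\epsilon\lambda_r(y)$, so if $\lambda_r(w)>\epsilon\lambda_r(y)$ then $w$ agrees with $y_{<d}$ to leading order and $\deg(w)=\deg(y_{<d})<d$ by Remark~\ref{R:height} (and $\deg(y)<d$ is trivial with $z=0$).

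The main obstacle is the \emph{boundary case} $\lambda_r(\zeta)=\lambda_r(x)$ — equivalently, $x$ has a Newton slope equal to $r$ — in which the step $v\mapsto v-z'x$ reintroduces a term of the full norm $\lambda_r(v)$ at a strictly lower index, so no fixed $\epsilon<1$ is supplied by the one-step estimate. Here one must argue that the iteration reaches a stopping configuration in finitely many steps: each maximal stretch on which $\lambda_r$ is constant is finite (the degree strictly decreases there), and if $\lambda_r$ eventually drops to $0$ then $z=\sum z_i$ converges and $w=0$; the delicate point is to rule out, or otherwise handle, a limit $\ell>0$ of the norm drops when the value group of $\left|\bullet\right|$ is not discrete. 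I expect this to require either a lower bound, in terms of $x$ alone, on the size of each norm drop (using that the cancellation producing it is controlled by the fixed slope-$r$ part of $\zeta$), or a preliminary slope factorization of $x$ (divide first by the factor whose slopes are $<r$, which is benign, and treat the pure slope-$r$ factor separately); this is where the substantive work of the proof lies.
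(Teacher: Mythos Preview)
Your proposal is incomplete: you correctly isolate the boundary case (when $x$ has a Newton slope equal to $r$, i.e.\ $\lambda_r(\zeta)=\lambda_r(x)$ after your normalization) as the crux, but you do not resolve it. Your speculative fixes do not work as stated. Slope factorization of $x$ is typically \emph{proved} using the division algorithm, so invoking it here would be circular. And your single-coefficient elimination gives no uniform lower bound on the norm drops: at each step the new term $z'\zeta$ has norm exactly $\lambda_r(v)$, so any drop comes from cancellation in $L$, whose size is not controlled by $x$ alone when $|\bullet|$ is not discrete.

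The paper closes this gap with two ideas you are missing. First, the choice of $\epsilon$ must incorporate the Witt vector carries: one requires $\lambda_r(p)\le\epsilon$ (automatic if $E$ has characteristic $p$) in addition to bounding the tail of $x$. This yields the near-additivity estimate
\[
\lambda_r\bigl([\overline{z}_1]\pm\cdots\pm[\overline{z}_n]-[\overline{z}_1\pm\cdots\pm\overline{z}_n]\bigr)\le\epsilon\max_i\lambda_r([\overline{z}_i]),
\]
which is exactly the uniform ``lower bound on norm drops'' you were seeking; it comes from the ambient Witt structure, not from $x$. Second, rather than eliminating one coefficient at a time, the paper subtracts the full quotient $z_l=\sum_{n\ge 0}\varpi^n[\overline{y}_{l,n+m}/\overline{x}_m]$ at each step. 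The invariant that then strictly decreases is not $\deg(y_l)$ but $N_l$, the largest index $n$ with $\lambda_r(\varpi^n[\overline{y}_{l,n}])>\epsilon\lambda_r(y)$: after the full subtraction, all ``large'' coefficients at indices $\ge m$ are cancelled at the level of $L$, and near-additivity ensures the carries and the tail of $x$ only contribute terms of size $\le\epsilon\lambda_r(y)$. Since $N_l$ is a nonnegative integer with $N_l\ge m$, this terminates.

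Your benign-case argument and your normalization are fine, and your single-step estimate is correct (in particular your observation that deleting one Teichm\"uller term produces no carries is valid). But the essential content of the lemma lies precisely in the boundary case, and there your outline stops short of a proof.
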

\begin{proof}
Put $m = \deg(x)$ and write $x = \sum_{n=0}^\infty \varpi^n [\overline{x}_n]$.
We may then choose $\epsilon \in (0, 1)$ such that 
$\lambda_r(p) \leq \epsilon$ and
$\lambda_r(\varpi^n [\overline{x}_n]) \leq \epsilon \lambda_r(x)$ for $n > m$;
we prove the claim for this value of $\epsilon$.
Note that by the homogeneity properties of Witt vector arithmetic (see again \cite[\S 4]{kedlaya-witt}),
the first condition ensures that for any $\overline{z}_1,\dots,\overline{z}_n \in L$,
\begin{equation} \label{eq:near additivity}
\lambda_r([\overline{z}_1] \pm \cdots \pm [\overline{z}_n] - [\overline{z}_1 \pm \cdots \pm \overline{z}_n])
\leq \epsilon \max\{\lambda_r([\overline{z}_1]), \dots, \lambda_r([\overline{z}_n])\}.
\end{equation}

We define a sequence $y_0, y_1,\dots$ as follows: take $y_0 = y$,
and given $y_l = \sum_{n=0}^\infty \varpi^n [\overline{y}_{l,n}]$,
put $z_l = \sum_{n=0}^\infty \varpi^n [\overline{y}_{l,n+m}/\overline{x}_m]$
and $y_{l+1} = y_l - z_l x$.
Note that $\lambda_r(z_l) = \lambda_r(y_l)/\lambda_r(x)$, so
$\lambda_r(y_{l+1}) \leq \lambda_r(y_l) \leq \lambda_r(y)$ for all $l$.
Consequently, if for some $l$ we have either $\lambda_r(y_l) \leq \epsilon \lambda_r(y)$ or $\deg(y_l) < m$, we may take $z = z_0 + \cdots + z_{l-1}$, $w = y_l$ to achieve the desired result.

It therefore suffices to deduce a contradiction under the assumptions that $\lambda_r(y_l) > \epsilon \lambda_r(y)$ and $\deg(y_l) \geq m$ for all $l$. 
To see this, let $N_l$ be the largest value of $n$ for which 
$\lambda_r(\varpi^n [\overline{y}_{l,n}]) > \epsilon \lambda_r(y)$; note that $N_l \geq m$. Put $x' = \sum_{n=0}^{m-1} \varpi^n [\overline{x}_n]$ and write
\begin{align*}
y_{l+1} &= \sum_{n=0}^{m-1} \varpi^n [\overline{y}_{l,n}] - z_l x' - z_l \sum_{n=m+1}^\infty \varpi^n [\overline{x}_n] \\
&= \sum_{n=0}^{m-1} \varpi^n [\overline{y}_{l,n}] 
- \sum_{i=0}^{N_l-m} \sum_{j=0}^{m-1} \varpi^{i+j} [\overline{y}_{l,i+m} \overline{x}_j/\overline{x}_m]
+ *
\end{align*}
with $\lambda_r(*) \leq \epsilon \lambda_r(y)$.
Define 
$\overline{w}_n \in L$ by the following identity in $L [ T]$:
\begin{equation} \label{eq:near additivity2}
\sum_{n=0}^\infty \overline{w}_n T^n = 
\sum_{n=0}^{m-1} \overline{y}_{l,n} T^n
- \sum_{i=0}^{N_l-m} \sum_{j=0}^{m-1} (\overline{y}_{l,i+m} \overline{x}_j/\overline{x}_m) T^{i+j}.
\end{equation}
For each $n$ and each pair $(i,j)$ with $i+j=n$, we have
\[
\lambda_r(\varpi^n [\overline{y}_{l,n}]), \lambda_r(\varpi^n [\overline{y}_{l,i+m} \overline{x}_j/\overline{x}_m]) \leq \lambda_r(y).
\]
Consequently,
by applying \eqref{eq:near additivity} to the coefficients of $T^n$ in \eqref{eq:near additivity2}, then multiplying through by $\varpi^n$ and summing over $n$, we see that
\[
y_{l+1} =  \sum_{n=0}^\infty \varpi^n [\overline{w}_n] + *
\]
with $\lambda_r(*) \leq \epsilon \lambda_r(y)$.
From \eqref{eq:near additivity2},
we see that $N_{l+1} < N_l$, yielding a contradiction.
\end{proof}

\begin{prop} \label{P:division algorithm}
For $x,y \in A^r_{L,E}$ with $x \neq 0$, we can write $y = zx + w$ for some $z,w \in A^r_{L,E}$ with $\lambda_r(w) \leq \lambda_r(y)$
and $\deg(w) < \deg(x)$.
\end{prop}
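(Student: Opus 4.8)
The plan is to iterate Lemma~\ref{L:division algorithm1} to produce a sequence of successive quotient–remainder pairs whose remainders decrease geometrically in $\lambda_r$-norm, and to sum the resulting partial quotients in the complete ring $A^r_{L,E}$. More precisely, given $x \neq 0$, fix the $\epsilon \in (0,1)$ provided by Lemma~\ref{L:division algorithm1}. Starting from $w_0 = y$, I would apply the lemma repeatedly: having produced $w_k$, write $w_k = z_{k+1} x + w_{k+1}$ with $\lambda_r(w_{k+1}) \leq \lambda_r(w_k)$ and, whenever $\lambda_r(w_{k+1}) > \epsilon \lambda_r(w_k)$, the degree bound $\deg(w_{k+1}) < \deg(x)$. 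The key point to extract is that once the degree condition $\deg(w_k) < \deg(x)$ holds at some stage, I want it to persist; if instead the norm keeps dropping by a factor $\leq \epsilon$ at each step, the remainders go to zero and I can terminate the construction in the limit.

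The main technical step is controlling the degree across the steps where the norm does \emph{not} drop below $\epsilon \lambda_r(w_k)$. The cleanest organization is a two-phase argument. In the first phase, apply the lemma only finitely many times: as long as $\deg(w_k) \geq \deg(x)$, condition (b) forces $\lambda_r(w_{k+1}) \leq \epsilon \lambda_r(w_k)$ (contrapositive of (b): if $\deg(w_{k+1}) \geq \deg(x)$ then $\lambda_r(w_{k+1}) \leq \epsilon\lambda_r(w_k)$ — I must check this is genuinely what (b) gives, namely that the failure of $\deg(w_{k+1}) < \deg(x)$ forces the norm to shrink). So either we reach a stage $k_0$ with $\deg(w_{k_0}) < \deg(x)$, or $\lambda_r(w_k) \to 0$ and $y = (\sum_k z_k) x$ with remainder $0$, whose degree is $-\infty < \deg(x)$. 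In the former case, I enter the second phase starting from $w_{k_0}$: I must now show I can continue dividing while keeping the degree strictly below $\deg(x)$ and driving the norm to zero. Here I would apply Lemma~\ref{L:division algorithm1} again to $w_{k_0}$; if the new remainder has norm $\leq \epsilon \lambda_r(w_{k_0})$ I simply recurse (each new remainder automatically satisfies the degree bound by Remark~\ref{R:height}, since it differs from a degree-$<\deg(x)$ element by something of smaller norm, or is itself produced with that bound), and if it has norm $> \epsilon\lambda_r(w_{k_0})$ then condition (b) again gives $\deg(w_{k_0+1}) < \deg(x)$ directly. Either way the remainders decrease geometrically, so $\sum z_k$ converges in $A^r_{L,E}$ (the norm on quotients is $\lambda_r(z_k) = \lambda_r(w_{k-1})/\lambda_r(x)$, which is summable), and the limiting remainder $w = \lim w_k$ satisfies $\lambda_r(w) \leq \lambda_r(y)$ by (a) telescoped and $\deg(w) < \deg(x)$ by Remark~\ref{R:height} applied to the tail of the sequence.

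I expect the genuine obstacle to be bookkeeping the degree condition through the limit: $\deg$ is not continuous (it is only lower-semicontinuous in the sense of Remark~\ref{R:height}), so I cannot naively pass $\deg(w_k) < \deg(x)$ to the limit. The fix is exactly Remark~\ref{R:height}: once $\lambda_r(w_{k+1}) < \lambda_r(w_k)$ with $\deg(w_k) < \deg(x)$, either $\deg(w_{k+1}) = \deg(w_k) < \deg(x)$, or the norm drop is accompanied by a drop in degree — either way $\deg(w_{k+1}) < \deg(x)$, and the same comparison applied between $w$ and any sufficiently close $w_k$ in the geometric tail gives $\deg(w) = \deg(w_k) < \deg(x)$. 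The one subtlety to verify carefully is that Lemma~\ref{L:division algorithm1} does not guarantee a \emph{strict} norm decrease, only $\lambda_r(w_{k+1}) \leq \lambda_r(w_k)$; so in the second phase I should check that if equality $\lambda_r(w_{k+1}) = \lambda_r(w_k)$ ever occurs together with $\lambda_r(w_{k+1}) > \epsilon\lambda_r(w_k)$, condition (b) still fires and pins the degree below $\deg(x)$, which it does since $\lambda_r(w_{k+1}) = \lambda_r(w_k) > \epsilon \lambda_r(w_k) = \epsilon\lambda_r(\text{the relevant }y)$ — so the argument closes.
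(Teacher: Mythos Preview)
Your first phase is exactly the paper's proof, and it already finishes the argument. Once you reach $w_{k_0}$ with $\deg(w_{k_0}) < \deg(x)$, condition (a) telescoped gives $\lambda_r(w_{k_0}) \leq \lambda_r(y)$, so you may simply take $w = w_{k_0}$ and $z = z_1 + \cdots + z_{k_0}$ and stop. The paper does precisely this: it iterates the lemma, halting (setting $z_l = 0$) the moment $\deg(y_l) < \deg(x)$; if that never happens, the contrapositive of (b) forces $\lambda_r(y_l) \leq \epsilon^l \lambda_r(y)$ and the remainder tends to $0$.

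Your second phase is both unnecessary and flawed. You assert that in phase two ``the remainders decrease geometrically,'' but Lemma~\ref{L:division algorithm1} does not guarantee this: when $\lambda_r(w_{k+1}) > \epsilon \lambda_r(w_k)$, condition (b) pins $\deg(w_{k+1}) < \deg(x)$ but allows $\lambda_r(w_{k+1}) = \lambda_r(w_k)$. If this alternative recurs indefinitely you get a sequence of remainders of constant norm, the partial quotients need not tend to zero, and $\sum z_k$ need not converge. Your closing sentence notices exactly this case but draws the wrong conclusion: yes, (b) ``fires and pins the degree,'' but that gives no norm decay and hence no convergence. The fix is not to repair phase two but to delete it: the proposition asks only for $\deg(w) < \deg(x)$, not $w = 0$, and phase one already delivers that.
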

\begin{proof}
Choose $\epsilon \in (0,1)$ as in Lemma~\ref{L:division algorithm1}.
We define sequences $y_0, y_1,\dots$ and $z_0, z_1,\dots$ as follows. Take $y_0 = y$.
Given $y_l$, if $\deg(y_l) < \deg(x)$, put $z_l = 0$, $y_{l+1} = y_l$.
Otherwise, apply Lemma~\ref{L:division algorithm1} to write 
$y_l = z_l x + w_l$ with $\lambda_r(w_l) \leq \lambda_r(y_l)$
and either $\lambda_r(w_l) \leq \epsilon \lambda_r(y_l)$ or $\deg(w_l) < \deg(x)$,
and put $y_{l+1} = w_l$.

We show that the sum $z = \sum_{l=0}^\infty z_l$ converges and has the desired effect.
From the construction, we have $\lambda_r(y_l) \leq \lambda_r(y)$ for all $l$.
If $z_l = 0$ for some $l$, then the sum is finite and $y-zx = y_l$, so $\lambda_r(y-zx) = \lambda_r(y_l) \leq \lambda_r(y)$ and $\deg(y-zx) = \deg(y_l) < \deg(x)$. Otherwise,
we have $\lambda_r(y_l) \leq \epsilon^l \lambda_r(y)$ for all $l$, so 
\[
\lambda_r(z_l) \leq \lambda_r(x)^{-1} \max\{\lambda_r(y_l), \lambda_r(y_{l+1})\} \to 0
\]
and the sum again converges.
We then have $y-zx = \lim_{l \to \infty} y_{l+1} = 0$, so we may take $w=0$.
\end{proof}

\begin{cor} \label{C:pid1}
The ring $A^r_{L,E}$ is a Euclidean domain for the function $\deg$, and hence a principal ideal domain.
\end{cor}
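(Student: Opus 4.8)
The plan is to check that $A^r_{L,E}$ together with the function $\deg$ satisfies the axioms of a Euclidean domain and then to invoke the standard fact that every Euclidean domain is a principal ideal domain; the substantive content has already been isolated in Proposition~\ref{P:division algorithm}, so what remains is largely bookkeeping. First I would note that $A^r_{L,E}$ is an integral domain: by Lemma~\ref{L:Gauss norms} the function $\lambda_r$ is multiplicative on $B_{L,E}$, and its restriction to $A^r_{L,E}$ is computed by \eqref{eq:Gauss norm formula}, so for nonzero $x, y \in A^r_{L,E}$ we get $\lambda_r(xy) = \lambda_r(x) \lambda_r(y) \neq 0$ and hence $xy \neq 0$. (One could equally deduce this from the additivity of degrees in Lemma~\ref{L:additivity of slopes}.) Moreover, for a nonzero $x = \sum_{n \geq 0} \varpi^n [\overline{x}_n] \in A^r_{L,E}$, every index $n$ with $\overline{x}_n \neq 0$ satisfies $n \geq 0$, so $\deg$ restricts to a function $A^r_{L,E} \setminus \{0\} \to \ZZ_{\geq 0}$.

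Next I would verify the two properties usually demanded of a Euclidean function. Multiplicative compatibility, $\deg(xy) \geq \deg(x)$ for nonzero $x, y$, follows from Lemma~\ref{L:additivity of slopes}, which gives $\deg(xy) = \deg(x) + \deg(y)$, together with $\deg(y) \geq 0$. The division property is exactly Proposition~\ref{P:division algorithm}: for $x \neq 0$ and any $y \in A^r_{L,E}$, there exist $z, w$ with $y = zx + w$ and $\deg(w) < \deg(x)$, and since $\deg(0) = -\infty$ this single inequality subsumes the case $w = 0$. Hence $A^r_{L,E}$ is a Euclidean domain for $\deg$.

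Finally, the implication from Euclidean domain to principal ideal domain is the classical argument: given a nonzero ideal $I$, pick $x \in I \setminus \{0\}$ of minimal degree; for any $y \in I$, write $y = zx + w$ with $w = y - zx \in I$ and $\deg(w) < \deg(x)$, so minimality forces $w = 0$ and $y \in x A^r_{L,E}$, whence $I = x A^r_{L,E}$. Since Proposition~\ref{P:division algorithm} supplies all the real work, there is no genuine obstacle here; the only point requiring a moment's care is confirming that the Newton-polygon-based notion of $\deg$ really does behave like a polynomial degree under multiplication, which is precisely the assertion of Lemma~\ref{L:additivity of slopes}.
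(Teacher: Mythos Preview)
Your proposal is correct and matches the paper's approach: the paper states this corollary with no proof, treating it as immediate from Proposition~\ref{P:division algorithm}, and you have simply filled in the routine verification that $\deg$ is a Euclidean function. One minor imprecision: Lemma~\ref{L:Gauss norms} is stated for $B_{L,E}$, not its completion $A^r_{L,E}$, so to deduce that $A^r_{L,E}$ is a domain you should either note that multiplicativity of $\lambda_r$ passes to the completion by continuity, or (cleaner, and as you already observe) invoke Lemma~\ref{L:additivity of slopes} directly, which is stated for $A^r_{L,E}$.
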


\section{The strong noetherian property}
\label{subsec:strong noetherian}

We now prove an analogue of the Hilbert basis theorem for the ring $A^r_{L,E}$.
\begin{defn}
For any commutative nonarchimedean Banach ring $A$ with norm $\left| \bullet \right|$, any nonnegative integer $n$, and any $n$-tuple $\rho = (\rho_1,\dots,\rho_n)$ of positive real numbers,
define the ring $A\{T_1/\rho_1,\dots,T_n/\rho_n\}$ as the completion of 
the ordinary polynomial ring $A[T_1,\dots,T_n]$ with respect to the weighted Gauss norm
\begin{equation} \label{eq:multi Gauss norm}
\left| \sum_{i_1,\dots,i_n=0}^\infty c_{i_1,\dots,i_n}T_1^{i_1}\cdots T_n^{i_n}\right|_\rho = 
\max_{i_1,\dots,i_n} \{\left| c_{i_1,\dots,i_n} \right| \rho_1^{i_1} \cdots \rho_n^{i_n}\}.
\end{equation}
We may view $A\{T_1/\rho_1,\dots,T_n/\rho_n\}$ as the subring of
$A \llbracket T_1,\dots,T_n \rrbracket$ consisting of those series
$\sum_{i_1,\dots,i_n=0}^\infty c_{i_1,\dots,i_n}T_1^{i_1}\cdots T_n^{i_n}$
for which $\left| c_{i_1,\dots,i_n} \right| \rho_1^{i_1} \cdots \rho_n^{i_n} \to 0$
as $i_1 + \cdots + i_n \to \infty$, with the norm again given by \eqref{eq:multi Gauss norm}.
Note that if $\left| \bullet \right|$ is multiplicative, then so is $\left| \bullet \right|_\rho$ (Gauss's lemma; see \cite[Lemma~1.7]{kedlaya-witt}).
\end{defn}

One would like to know that $A\{T_1/\rho_1,\dots,T_n/\rho_n\}$ is noetherian whenever $A$ is, but this is only known under somewhat restrictive hypotheses, e.g., when $A$ is a nonarchimedean field \cite[Theorem~5.2.6/1]{bgr}. 
Over the course of \S\ref{subsec:strong noetherian}, we will prove the following theorem,
which answers a question of Fargues \cite{fargues}
by proving that $A^r_{L,E}$ is \emph{strongly noetherian} in the sense of Huber. This means that Huber's theory of adic spaces, as developed in \cite{huber}, applies to this ring; we will pursue this point in an upcoming sequel to \cite{part1}.

\begin{theorem} \label{T:strongly noetherian Robba}
For $r>0$, view $A^r_{L,E}$ as a Banach ring using the norm $\lambda_r$. Then for any nonnegative integer $n$ and any $\rho_1,\dots,\rho_n>0$,
the ring $R = A^r_{L,E}\{T_1/\rho_1,\dots,T_n/\rho_n\}$ is noetherian.
\end{theorem}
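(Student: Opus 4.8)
The plan is to prove Theorem~\ref{T:strongly noetherian Robba} by an induction on $n$ that rests on a Gr\"obner-basis formalism adapted to the filtration of $A^r_{L,E}$ by $\deg$. The case $n=0$ is Corollary~\ref{C:pid1}, so assume the result for $n-1$ and pass to $R = A^r_{L,E}\{T_1/\rho_1,\dots,T_n/\rho_n\}$. The idea is to equip $R$ with a notion of ``leading term'': to a nonzero element $f = \sum_\alpha c_\alpha T^\alpha$ (multi-index notation, $\alpha \in \ZZ_{\geq 0}^n$), attach the weight $w(f) = \max_\alpha \{\lambda_r(c_\alpha)\rho^\alpha\}$, and then among the indices achieving this maximum pick out those whose coefficient $c_\alpha$ has the largest value of $\deg$, breaking further ties by a monomial order on $\alpha$. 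Using Remark~\ref{R:height}, $\deg$ is insensitive to terms of strictly smaller $\lambda_r$-norm, so this package behaves like a leading term: the leading datum of $fg$ is the product of the leading data of $f$ and $g$ thanks to the multiplicativity of $\lambda_r$ (Lemma~\ref{L:Gauss norms}, Gauss's lemma) and the additivity of $\deg$ (Lemma~\ref{L:additivity of slopes}).

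Next I would establish a division algorithm in $R$. Given a finite set $g_1,\dots,g_s \in R$ whose leading data are fixed, and any $f \in R$, I want to write $f = \sum_i q_i g_i + r$ where no monomial appearing in $r$ (at full weight) is ``divisible'' by any leading monomial of the $g_i$, and where $\lambda_r$-norms are controlled so the process converges in the Banach ring $R$. The mechanism is the usual one: if the leading term of $f$ is divisible by that of some $g_i$, subtract an appropriate $c T^\beta g_i$ to cancel it; here the coefficient reduction in the $A^r_{L,E}$-direction is handled by Proposition~\ref{P:division algorithm} (Euclidean division in $A^r_{L,E}$), while the $T$-direction reduction is monomial division. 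One must check that each cancellation strictly decreases an appropriate ordinal-valued invariant (built from weight, $\deg$ of the relevant coefficient, and monomial order) OR strictly decreases the $\lambda_r$-norm by a uniform factor $\epsilon < 1$ coming from Lemma~\ref{L:division algorithm1}; in the latter regime the tails converge $\lambda_r$-adically, giving a genuine element of the completion $R$. This is the step I expect to be the main obstacle: reconciling the two sources of termination — the well-ordering of leading data versus the geometric decay of norms — into a single clean argument that the algorithm halts or converges, and verifying that the remainder $r$ genuinely has no leading term divisible by the $g_i$ even after passing to the limit.

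With division in hand, the ascending chain condition follows in the standard Gr\"obner way. Given an ideal $I \subseteq R$, consider the ``leading ideal'' — more precisely, for each $\delta \in \ZZ_{\geq 0}$ and each weight level, the set of leading monomials arising from elements of $I$ whose leading coefficient has $\deg \leq \delta$. Because $A^r_{L,E}$ is noetherian (indeed a PID, Corollary~\ref{C:pid1}) and the monomials in $n$ variables satisfy Dickson's lemma, these leading data stabilize: one can extract a finite generating set $g_1,\dots,g_s$ of $I$ whose leading data generate all leading data of $I$. Then division of an arbitrary $f \in I$ by the $g_i$ produces a remainder $r \in I$ with no admissible leading term, forcing $r = 0$, so $f \in (g_1,\dots,g_s)$. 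Hence $I$ is finitely generated and $R$ is noetherian. I would organize the write-up by first isolating the abstract Gr\"obner-basis lemma (leading-term formalism $+$ division $+$ Dickson) in a form close to \cite{kedlaya-rigid-finiteness, kedlaya-liu-families}, then checking that $A^r_{L,E}$ with the data $(\lambda_r, \deg)$ satisfies its hypotheses via the results of \S\ref{sec:notations}, and finally deducing the theorem.
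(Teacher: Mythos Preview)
Your proposal is correct and uses the same core strategy as the paper: a Gr\"obner-basis argument combining Dickson's lemma on $\ZZ_{\geq 0}^n$, the multiplicativity of $\lambda_r$, and the Euclidean structure on $A^r_{L,E}$ via $\deg$ (Proposition~\ref{P:division algorithm}). The ingredients you list are exactly the ones the paper invokes.

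There are, however, a few organizational differences worth noting. First, the induction on $n$ is unnecessary: the paper treats all $n$ at once, since Dickson's lemma already handles the combinatorics uniformly. Second, the paper uses a simpler leading-term notion than yours---it records only the $\preceq$-maximal index among those at maximal norm, and tracks $\deg$ separately by defining, for each index $I$, the integer $d_I$ as the \emph{smallest} $\deg$ of a leading coefficient occurring in the ideal at leading index $I$; the finite set of generators is then indexed by the minimal $I$'s for each value of $d_I$. Third, and most usefully for you, the paper does not attempt a division algorithm with a remainder condition. Instead it proves directly (Lemma~\ref{L:reduce size with generators}) that given the finite generator set, any $y$ in the ideal can be approximated by a combination of generators up to an error of norm $\leq \epsilon |y|_\rho$ for a fixed $\epsilon<1$; iterating this gives convergence. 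This sidesteps precisely the obstacle you anticipated---reconciling the well-ordering termination with geometric norm decay---because one never needs the process to ``halt'' at a genuine remainder, only to shrink the norm. Your framing would work but requires more bookkeeping; the paper's $\epsilon$-reduction lemma is the cleaner packaging.
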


Our approach to the proof relies on some standard ideas from the theory of Gr\"obner bases; indeed, it can be used to give an alternate proof of \cite[Theorem~5.2.6/1]{bgr}.
We start with the underlying combinatorial construction.

\begin{hypothesis}
For the remainder of \S\ref{subsec:strong noetherian}, retain notation as in 
Theorem~\ref{T:strongly noetherian Robba}, let $H$ be an ideal of $R$,
and let $I = (i_1,\dots,i_n)$ and $J = (j_1,\dots,j_n)$ (and subscripted versions thereof,
such as $I_k = (i_{k,1},\dots,i_{k,n})$) 
denote elements of the additive monoid $\ZZ_{\geq 0}^n$ of $n$-tuples of nonnegative integers.
\end{hypothesis}

\begin{defn}
We equip $\ZZ_{\geq 0}^n$ with the componentwise partial order $\leq$,
for which $I \leq J$ if and only if $i_k \leq j_k$ for $i=1,\dots,n$. 
This partial order is a \emph{well-quasi-ordering}: any infinite sequence contains an infinite nondecreasing subsequence.

We also equip $\ZZ_{\geq 0}^n$ with the \emph{graded lexicographic} total order $\preceq$, for which $I \prec J$ if 
either $i_1 + \cdots + i_n < j_1 + \cdots + j_n$, 
or
$i_1 + \cdots + i_n = j_1 + \cdots + j_n$
and there exists $k \in \{1,\dots,n\}$ such that $i_l = j_l$ for $l<k$ and $i_k < j_k$.
Since $\preceq$ is a refinement of $\leq$, it is a well-ordering.
\end{defn}

\begin{remark}
In commutative algebra, the only critical properties of $\preceq$ are that it is a well-ordering and that it refines $\leq$. In some cases (such as ours), it is also important that for any $I$, there are only finitely many $J$ with $J \preceq I$. In any case, there are many options for $\preceq$ with similar properties,
giving rise to many different \emph{term orderings} which are relevant for practical applications. See for instance \cite{eisenbud}.
\end{remark}

We next define a notion of \emph{leading terms} for elements of $R$. Note that a similar construction appears already in \cite{kedlaya-rigid-finiteness}.
\begin{defn}
For $x = \sum_I x_I T^I \in R$ nonzero,
define the \emph{leading index} of $x$ to be the index $I$ which is maximal under $\preceq$ for the property that $\left| x_I T^I \right|_\rho = \left| x \right|_\rho$,
and define the \emph{leading coefficient} of $x$ to be the corresponding value of $x_I$.
\end{defn}

We can now construct an analogue of a Gr\"obner basis for the ideal $H$.
\begin{defn}
For each $I$, let $d_I$ be the smallest possible degree of the leading coefficient of an element of $H$ with leading index $I$, or $+\infty$ if no such element exists.
Note that if $I_1 \leq I_2$, then $d_{I_2} \leq d_{I_1}$.

Since $\ZZ_{\geq 0}^n$ is well-quasi-ordered under $\leq$,
the set of $I$ for which $d_I < +\infty$ contains only finitely many minimal elements with respect to $\leq$. Consequently, the set of possible finite values of $d_I$ is bounded above, and hence is finite. For each nonnegative integer $d$, let $S_d$ be the set of $I$ which are minimal with respect to $\leq$ for the property that $d_I = d$; then $S_d$ is finite for all $d$ and empty for all but finitely many $d$. Let $S$ be the union of the $S_d$. 
For each $I \in S$, choose $x_I \in H \setminus \{0\}$
with leading index $I$ and leading coefficient of degree $d_I$.
\end{defn}

We claim that the finite set $\{x_I: I \in S\}$ generates the ideal $H$.
As in the proof of Proposition~\ref{P:division algorithm}, we first establish a certain approximate version of this statement, using an iterative construction and a proof by contradiction based on well-ordering properties.
\begin{lemma} \label{L:reduce size with generators}
There exists $\epsilon \in (0,1)$ with the following property: for each $y \in H$,
there exist $a_I \in R$ for $I \in S$ such that
$\left| a_I \right|_\rho \left| x_I \right|_\rho \leq \left| y \right|_\rho$ and
$\left| y - \sum_{I \in S} a_I x_I \right|_\rho \leq \epsilon \left|y \right|_\rho$.
\end{lemma}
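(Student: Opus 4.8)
The plan is to imitate the proof of Proposition~\ref{P:division algorithm}: construct the $a_I$ by an iterative "leading-term reduction'' of a running remainder against the finite family $\{x_I : I \in S\}$, control the iteration by a well-founded complexity invariant built from the leading index (for the well-ordering $\preceq$) together with the degree of the leading coefficient (in $\ZZ_{\geq 0}$), and argue by contradiction, using the well-ordering, that the process is forced down to the asserted bound. In this analogy, the uniform $\epsilon$ plays the role that the degree of the fixed divisor plays in the one-variable Euclidean algorithm.

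First I would fix $\epsilon$. For each $I \in S$, write the leading coefficient $c_I$ of $x_I$ as $\sum_n \varpi^n [\overline{c}_{I,n}]$, so that $\deg c_I = d_I$, and write $x_I = \sum_K b_{I,K} T^K$, with leading index $I$. Since $S$ is finite, each $c_I$ has finite degree, and each $x_I$ is $\left| \bullet \right|_\rho$-convergent, only finitely many constraints are imposed by requiring $\lambda_r(p) \leq \epsilon$, together with $\lambda_r(\varpi^n [\overline{c}_{I,n}]) \leq \epsilon \lambda_r(c_I)$ for all $I \in S$ and all $n > d_I$, and $\left| b_{I,K} T^K \right|_\rho \leq \epsilon \left| x_I \right|_\rho$ for all $I \in S$ and all $K \succ I$; so some $\epsilon \in (0,1)$ meets all of them. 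As in Lemma~\ref{L:division algorithm1}, the bound $\lambda_r(p) \leq \epsilon$ makes the near-additivity estimate \eqref{eq:near additivity} available with this $\epsilon$. Now, given $y \in H$ with $v := \left| y \right|_\rho > 0$, I would iterate the following step on a running remainder $u \in H$ (initially $y$), stopping as soon as $\left| u \right|_\rho \leq \epsilon v$. Let $J$ be the leading index and $c$ the leading coefficient of $u$; since $u$ is a nonzero element of $H$ with leading index $J$, we have $d_J < +\infty$, so by construction of $S$ there is $I \in S$ with $I \leq J$ and $d_I = d_J \leq \deg c$. Apply the Euclidean division of Proposition~\ref{P:division algorithm} inside $A^r_{L,E}$ to write $c = \gamma c_I + \delta$ with $\lambda_r(\delta) \leq \lambda_r(c)$ and $\deg \delta < d_I = d_J$, and replace $u$ by $u - \gamma T^{J-I} x_I$, adding the contribution $\gamma T^{J-I}$ to $a_I$. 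Since Teichm\"uller lifts multiply exactly, the leading monomial of $\gamma T^{J-I} x_I$ is $\gamma c_I T^J$, so the coefficient at index $J$ becomes $c - \gamma c_I = \delta$; by the tail bound on $x_I$, the contributions of $\gamma T^{J-I} x_I$ at indices $\succ J$ have $\left| \bullet \right|_\rho \leq \epsilon v$, and at indices $\prec J$ have $\left| \bullet \right|_\rho \leq v$. Since every term subtracted along the way has norm $\leq v$, the resulting $a_I$ automatically satisfy $\left| a_I \right|_\rho \left| x_I \right|_\rho = \left| a_I x_I \right|_\rho \leq v$.

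The per-step dichotomy, established exactly as in Lemma~\ref{L:division algorithm1} using \eqref{eq:near additivity} and Remark~\ref{R:height}, is this: either the norm $\left| u \right|_\rho$ strictly decreases, or it is unchanged and the leading index of $u$ strictly decreases under $\preceq$. Indeed the tail estimates force the leading index not to increase while the norm is unchanged; and the minimality built into the definition of $d_J$ rules out the only remaining possibility, namely that $u - \gamma T^{J-I} x_I$ keeps $J$ as its leading index with a leading coefficient of degree $< d_J$ — an element of $H$ with leading index $J$ must have leading coefficient of degree at least $d_J$. Since $\preceq$ is a well-ordering, the leading index can strictly decrease only finitely often at a fixed norm level, so after finitely many steps the norm $\left| u \right|_\rho$ must strictly drop, and then the argument repeats at the new norm level.

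The main obstacle is to see that this forced descent terminates after finitely many steps, and terminates with $\left| u \right|_\rho \leq \epsilon v$ rather than stalling at an intermediate value — the analogue of the way the condition $\deg(w) < \deg(x)$ terminates the one-variable algorithm. The point to be made precise is that if the eventual remainder $w = y - \sum_{I \in S} a_I x_I$ had a coefficient $c_K(w)T^K$ of norm $> \epsilon v$ at an index $K$ with $d_K \leq \deg c_K(w)$, then, choosing $I \in S$ with $I \leq K$ and $d_I = d_K$, this coefficient would still be reducible against $x_I$; so the construction must be organized — keeping track of which indices have already been processed, and controlling via the tail bounds on the $x_I$ exactly how a reduction performed at one index feeds back into the coefficients at smaller indices — so that upon termination every coefficient of norm $> \epsilon v$ sits at an irreducible index, which for $w \in H$ forces $w = 0$ or $\left| w \right|_\rho \leq \epsilon v$. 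Carrying out this organization, and in particular bounding the cumulative effect of these feedback contributions so that the process does not reintroduce large coefficients at ever-smaller indices without end, is where the argument genuinely goes beyond Proposition~\ref{P:division algorithm}, and is the technical heart of the lemma.
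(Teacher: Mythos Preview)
Your setup is essentially the paper's: the same choice of $\epsilon$ (from the tails $\{b_{I,K}T^K : K \succ I\}$ of the finitely many $x_I$), the same iterative reduction of the leading coefficient via Proposition~\ref{P:division algorithm}, and the same per-step observation that the subtraction $u \mapsto u - \gamma T^{J-I} x_I$ perturbs coefficients at indices $\succ J$ by at most $\epsilon v$. Your per-step dichotomy (``either $|u|_\rho$ drops, or the leading index drops under $\preceq$'') is correct, and your identification of the main obstacle is accurate. The extra constraints you put on $\epsilon$ coming from Lemma~\ref{L:division algorithm1} are harmless but unnecessary: the paper uses Proposition~\ref{P:division algorithm} as a black box, so only the tail bound on the $x_I$ enters.

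The gap is exactly where you say it is, and your proposal does not close it. A strictly decreasing sequence of norm values in $(\epsilon v, v]$ need not be finite, since the value group of $\lambda_r$ (and hence of $|\cdot|_\rho$) can be dense in $\RR_{>0}$; so ``the argument repeats at the new norm level'' does not terminate on its own. The paper's resolution does \emph{not} track norm levels. Instead, under the assumption $|y_l|_\rho > \epsilon v$ for all $l$, it tracks the \emph{$\epsilon$-support} $E_l = \{K : |y_{l,K} T^K|_\rho > \epsilon v\}$. Your tail bound shows that the step at $J_l$ leaves $E_l$ unchanged at all indices $\succ J_l$; hence every $E_l$ is contained in $\{K : K \preceq J_+\}$ where $J_+ = \max_\preceq E_0$. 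The decisive point---which goes beyond the mere well-ordering of $\preceq$---is that this set is \emph{finite}, because $\preceq$ is the graded lexicographic order and there are only finitely many $K$ with $|K| \leq |J_+|$. Thus the sequence of leading indices $J_l$ lives in a finite set; by pigeonhole, the $\preceq$-largest value $J$ that recurs infinitely often admits consecutive occurrences $J_l = J_{l'} = J$ with $J_k \prec J$ for $l < k < l'$. Between steps $l+1$ and $l'$ the coefficient at $J$ is only perturbed by terms of norm $\leq \epsilon v < |y_{l',J}T^J|_\rho$, so Remark~\ref{R:height} gives $\deg y_{l',J} = \deg w_l < d_J$, contradicting the minimality defining $d_J$.

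In short: you correctly located the difficulty, but the missing idea is the finiteness of $\{K : K \preceq J_+\}$ and the resulting pigeonhole/recurrence argument, not a bookkeeping reorganization of which indices have been processed.
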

\begin{proof}
Write $x_I = \sum_{J} x_{I,J} T^{J}$ and let $c_I = x_{I,I}$ be the leading coefficient of $x_I$. Let $\epsilon$ be the maximum of $\left| x_{I,J} T^J \right|_\rho / \left| c_I T^I \right|_\rho$ over all $I \in S$ and $J$ for which $I \prec J$, provided that this maximum is positive (and hence belongs to $(0,1)$); otherwise, choose any $\epsilon \in (0,1)$. We prove the claim for this value of $\epsilon$.

We define $y_l \in H$, $a_{l,I} \in R$ for $l=0,1,\dots$ and $I \in S$ as follows.
Put $y_0 = y$. Given $y_l = \sum_{J} y_{l,J} T^{J}$,
if $\left| y_l \right|_\rho \leq \epsilon \left| y \right|_\rho$, put $a_{l,I} = 0$
and $y_{l+1} = y_l$.
Otherwise, $y_l$ is nonzero, so it has a leading index $J_l$. By construction, we can find an index $I_l \in S$ such that $I_l \leq J_l$ and $d_{I_l} = d_{J_l}$.
Apply Proposition~\ref{P:division algorithm} to write $y_{l,J_l} = z_l c_{I_l} + w_l$ 
for some $z_l, w_l \in A^r_{L,E}$ with $\left| w_l \right| \leq \left| y_{l,J_l} \right|$
and $\deg(w_l) < \deg(c_{I_l}) = d_{I_l}$.
Put
\[
a_{l,I} = \begin{cases} z_l T^{J_l - I_l} & (I = I_l) \\ 0 & (I \neq I_l), 
\end{cases}
\qquad
y_{l+1} = y_l- a_{l,I_l} x_{I_l}.
\]
If $\left| y_l \right|_\rho \leq \epsilon \left| y \right|_\rho$ for some $l$, then the sums
$a_I = \sum_{l=0}^\infty a_{l,I}$ are finite and have the desired effect. It thus suffices to derive a contradiction under the assumption that 
$\left| y_l \right|_\rho > \epsilon \left| y \right|_\rho$ for all $l$.

Define the \emph{$\epsilon$-support} of $y_l$ to be the finite set $E_l$ consisting of those $J$ for which $\left| y_{l, J} T^J \right|_\rho > \epsilon \left| y \right|_\rho$;
in particular, $J_l \in E_l$.
By virtue of our choice of $\epsilon$, $E_l$ and $E_{l+1}$ agree for all indices $J$ for which $J_l\prec J$.
In particular, since $E_0$ is finite, we can choose $J_+$ for which $J \preceq J_+$ for all $J \in E_0$, and then $J \preceq J_+$ for $J \in E_{l}$ for all $l$.

The set $\{J \in \ZZ_{\geq 0}^n: J \preceq J_+\}$ is finite, so for some $l_0$, every index which occurs as $J_l$ for a single $l \geq l_0$ occurs for infinitely many such $l$. Let $J$ be the largest such index with respect to $\preceq$, choose some $l \geq l_0$ for which $J = J_l$, and let $l'$ be the smallest value greater than $l$ for which $J_l = J_{l'}$; then $J_k \prec J$ for $l< k<l'$.
By the choice of $\epsilon$, we have 
\[
\left| y_{k,J} T^J - y_{k+1,J} T^J \right|_\rho \leq \epsilon \left|y \right|_\rho \qquad (k = l+1, \dots, l'-1)
\]
and hence
\[
\left| y_{l+1,J} T^J - y_{l',J} T^J \right|_\rho \leq \epsilon \left|y \right|_\rho < \left| y_{l',J} T^J\right|_\rho.
\]
But now
\[
\deg(y_{l+1,J}) < d_J, \left| y_{l+1,J} - y_{l',J} \right| < \left| y_{l',J} \right|
\]
and by Remark~\ref{R:height} this yields $\deg(y_{l',J}) < d_J$, contradicting the definition of $d_J$.
\end{proof}

We now finish as in the proof of Proposition~\ref{P:division algorithm}.

\begin{lemma}
The finite set $\{x_I: I \in S\}$ generates the ideal $H$.
Consequently, Theorem~\ref{T:strongly noetherian Robba} holds.
\end{lemma}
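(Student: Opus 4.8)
The plan is to bootstrap the approximate statement of Lemma~\ref{L:reduce size with generators} into an exact one, by exactly the iterative device already used to pass from Lemma~\ref{L:division algorithm1} to Proposition~\ref{P:division algorithm}. Fix $\epsilon \in (0,1)$ as in Lemma~\ref{L:reduce size with generators}, and fix $y \in H$. I would build sequences $y_0, y_1, \dots \in H$ and $a_{l,I} \in R$ (for $l \geq 0$, $I \in S$) by setting $y_0 = y$ and, given $y_l$, applying Lemma~\ref{L:reduce size with generators} to obtain $a_{l,I} \in R$ with $\left| a_{l,I} \right|_\rho \left| x_I \right|_\rho \leq \left| y_l \right|_\rho$ and $\left| y_l - \sum_{I \in S} a_{l,I} x_I \right|_\rho \leq \epsilon \left| y_l \right|_\rho$; then put $y_{l+1} = y_l - \sum_{I \in S} a_{l,I} x_I$, which again lies in $H$ because each $x_I$ does. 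The key point, requiring no new work, is that the same $\epsilon$ and the same finite set $S$ are usable at every stage, since both depend only on $H$ and not on the element being reduced.

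By induction $\left| y_l \right|_\rho \leq \epsilon^l \left| y \right|_\rho$, so $\left| a_{l,I} \right|_\rho \leq \left| x_I \right|_\rho^{-1} \epsilon^l \left| y \right|_\rho \to 0$ as $l \to \infty$ (here I use $x_I \neq 0$, so $\left| x_I \right|_\rho > 0$). Since $R = A^r_{L,E}\{T_1/\rho_1,\dots,T_n/\rho_n\}$ is complete with respect to $\left| \bullet \right|_\rho$ by construction, each series $a_I := \sum_{l=0}^\infty a_{l,I}$ converges in $R$, and
\[
\sum_{I \in S} a_I x_I \;=\; \lim_{l \to \infty} \sum_{k=0}^{l-1} \sum_{I \in S} a_{k,I} x_I \;=\; \lim_{l \to \infty} (y - y_l) \;=\; y,
\]
because $\left| y_l \right|_\rho \to 0$. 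Hence $y$ lies in the ideal generated by $\{x_I : I \in S\}$. As $y \in H$ was arbitrary and each $x_I \in H$, this shows $H = (x_I : I \in S)$ is finitely generated. Since $H$ was an arbitrary ideal of $R$, every ideal of $R$ is finitely generated, so $R$ is noetherian; this is precisely Theorem~\ref{T:strongly noetherian Robba}.

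I do not expect any real obstacle in this final step: all the substance has already been packaged into Lemma~\ref{L:reduce size with generators} (and, behind it, the Euclidean division of Proposition~\ref{P:division algorithm} together with the well-quasi-ordering and well-ordering arguments that produce the finite set $S$ with a uniform $\epsilon$). The only things needing care are the bookkeeping of the norm estimates along the iteration and the appeal to completeness of $R$ to sum the correction terms $a_{l,I}$ — both routine.
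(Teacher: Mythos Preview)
Your proposal is correct and follows essentially the same approach as the paper: iterate Lemma~\ref{L:reduce size with generators} with the fixed $\epsilon$ to force $\left| y_l \right|_\rho \leq \epsilon^l \left| y \right|_\rho$, then sum the convergent corrections $a_I = \sum_l a_{l,I}$ in the complete ring $R$ to get $y = \sum_{I \in S} a_I x_I$. Your write-up is in fact slightly more careful than the paper's (you make explicit the use of $x_I \neq 0$ and the completeness of $R$), but the argument is the same.
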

\begin{proof}
Choose $\epsilon \in (0,1)$ as in Lemma~\ref{L:reduce size with generators}.
For $y \in H$, define sequences $y_0, y_1,\dots$ and $a_{0,I}, a_{1,I}, \dots$ for $I \in S$ as follows: put $y_0 = y$,
and given $y_l$,
apply Lemma~\ref{L:reduce size with generators}
to construct $a_{l,I} \in R$ for $I \in S$ such that
$\left| a_{l,I} \right|_\rho \left| x_I \right|_\rho \leq \left| y_l \right|_\rho$ and
$\left| y_l - \sum_{I \in S} a_{l,I} x_I \right|_\rho \leq \epsilon \left|y_l \right|_\rho$,
then put $y_{l+1} = y_l - \sum_{I \in S} a_{l,I} x_I$. 
By construction, $\left| y_l \right|_\rho \leq \epsilon^l \left| y_l \right|_\rho$,
so the sequence $\{y_l\}_{l=0}^\infty$ converges to zero and the sums 
$a_I = \sum_{l=0}^\infty a_{l,I}$ converge to limits satisfying
$y = \sum_{I \in S} a_I x_I$.
\end{proof}

\section{Some additional rings}
\label{sec:other rings}

We next define the rings that appear directly in the study of the adic spaces associated to Fargues-Fontaine curves, and use Theorem~\ref{T:strongly noetherian Robba} to extend the strong noetherian property to these rings.

\begin{hypothesis}
Throughout \S\ref{sec:other rings}, let $I = [s,r]$ be a closed subinterval of $(0, +\infty)$.
\end{hypothesis}

\begin{defn}
Define $\lambda_I = \max\{\lambda_s, \lambda_r\}$; by Lemma~\ref{L:Gauss norms}, this is a power-multiplicative norm on $B_{L,E}$.
Let $B^I_{L,E}$ be the completion of $B_{L,E}$ with respect to $\lambda_I$. 
Let $B^{I,+}_{L,E}$ denote the subring of $x \in B^I_{L,E}$ for which $\lambda_I(x) \leq 1$.
\end{defn}

\begin{remark} \label{R:identify rings}
In the case $E = \Qp$, the ring $B^I_{L,E}$ coincides with the ring $\tilde{\calR}^I_L$ of \cite{part1}; in general, it appears under the notation $B_I$ in \cite{fargues-fontaine}.
\end{remark}

The following may be considered an analogue of the Hadamard three circles inequality
(compare \cite[Lemma~4.2.3]{part1}).
\begin{lemma} \label{L:Hadamard}
For $t_1, t_2 \in I$ and $c \in [0,1]$, put $t = t_1^c t_2^{1-c}$. Then for all $x \in B^I_{L,E}$,
\[
\lambda_t(x) \leq \lambda_{t_1}(x)^c \lambda_{t_2}(x)^{1-c}.
\]
\end{lemma}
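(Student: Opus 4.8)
The plan is to reduce the Hadamard three-circles inequality for $\lambda_t$ to the corresponding statement for a single Witt vector of the form $\varpi^n[\overline{x}_n]$, where it becomes an elementary statement about logarithmic convexity. First I would recall from \eqref{eq:Gauss norm formula} that for $x = \sum_{n \in \ZZ} \varpi^n [\overline{x}_n] \in B^I_{L,E}$, each of $\lambda_{t_1}$, $\lambda_{t_2}$, and $\lambda_t$ is computed as a supremum over $n$ of the quantities $p^{-n} |\overline{x}_n|^{t_i}$ (using that \eqref{eq:Gauss norm formula} continues to hold on the completion, as noted in the definition of $A^r_{L,E}$ and its analogue for $B^I_{L,E}$). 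So it suffices to observe that for each fixed $n$,
\[
p^{-n} |\overline{x}_n|^{t} = \bigl(p^{-n} |\overline{x}_n|^{t_1}\bigr)^c \bigl(p^{-n} |\overline{x}_n|^{t_2}\bigr)^{1-c},
\]
which is immediate since $t = ct_1 + (1-c)t_2$ in the exponent (after taking logarithms, this is the identity $-n = c(-n) + (1-c)(-n)$ together with $\log|\overline{x}_n|^t = c\log|\overline{x}_n|^{t_1} + (1-c)\log|\overline{x}_n|^{t_2}$).

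Next I would pass from the termwise identity to the supremum. The key inequality is the general fact that for nonnegative reals $a_n, b_n$, one has $\sup_n (a_n^c b_n^{1-c}) \leq (\sup_n a_n)^c (\sup_n b_n)^{1-c}$, which follows from $a_n^c b_n^{1-c} \leq (\sup a_n)^c (\sup b_n)^{1-c}$ for each individual $n$ and then taking the supremum on the left. Applying this with $a_n = p^{-n}|\overline{x}_n|^{t_1}$ and $b_n = p^{-n}|\overline{x}_n|^{t_2}$ yields
\[
\lambda_t(x) = \sup_n p^{-n}|\overline{x}_n|^{t} = \sup_n a_n^c b_n^{1-c} \leq \Bigl(\sup_n a_n\Bigr)^c \Bigl(\sup_n b_n\Bigr)^{1-c} = \lambda_{t_1}(x)^c \lambda_{t_2}(x)^{1-c},
\]
which is exactly the claimed bound. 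One technical point to check is that the suprema in \eqref{eq:Gauss norm formula} are genuinely attained (or at least that the formula is valid) for elements of the completion $B^I_{L,E}$, not just for elements of $B_{L,E}$; this is handled by the same density argument that justifies extending \eqref{eq:Gauss norm formula} to $A^r_{L,E}$, namely that on $B_{L,E}$ the coefficients $\overline{x}_n$ are eventually small enough (for $n$ large) and the values $p^{-n}|\overline{x}_n|^{t}$ tend to $0$, so the relevant supremum is a maximum over a cofinite-complement-bounded index set and passes to limits continuously in each $\lambda_{t_i}$.

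I do not expect any serious obstacle here: the statement is essentially the observation that each $\lambda_t$ is a tropicalization of a family of monomials in $|\overline{x}_n|$ with exponent $t$, and such tropical expressions are automatically log-convex in $t$. The only place requiring minor care is confirming that one may work coefficientwise on the completed ring $B^I_{L,E}$ rather than only on $B_{L,E}$; once the series representation $\sum_n \varpi^n[\overline{x}_n]$ is available on $B^I_{L,E}$ with the stated growth conditions, the argument above goes through verbatim. I would therefore organize the proof as: (1) reduce to the termwise identity via the series representation; (2) prove the termwise identity by taking logarithms; (3) conclude by the supremum-of-geometric-means inequality.
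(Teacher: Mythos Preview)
Your approach is the same as the paper's: reduce to a single term $\varpi^n[\overline{x}_n]$, where the claimed inequality becomes an equality, and then pass to the supremum. The paper phrases the first reduction as ``by continuity it suffices to check for $x \in B_{L,E}$'' and then invokes \eqref{eq:Gauss norm formula} directly; you instead try to work with a Teichm\"uller expansion on all of $B^I_{L,E}$. The paper's route is cleaner here, since the existence of such an expansion for arbitrary elements of the completion $B^I_{L,E}$ is not established in this paper (though it is true), whereas density of $B_{L,E}$ is immediate from the definition and the continuity of each $\lambda_t$ for $t \in I$ follows from $\lambda_t \leq \lambda_I$.

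There is, however, one genuine point you should have flagged rather than glossed over. The statement as printed sets $t = t_1^c t_2^{1-c}$ (geometric mean), but your termwise identity
\[
p^{-n}|\overline{x}_n|^{t} = \bigl(p^{-n}|\overline{x}_n|^{t_1}\bigr)^c \bigl(p^{-n}|\overline{x}_n|^{t_2}\bigr)^{1-c}
\]
holds if and only if $t = ct_1 + (1-c)t_2$ (arithmetic mean), which is indeed what you write in your justification. With the geometric mean the lemma is actually false: take $x = [\overline{x}]$ with $0 < |\overline{x}| < 1$, $t_1 = 1$, $t_2 = 4$, $c = 1/2$; then $t = 2$ and $\lambda_2(x) = |\overline{x}|^2 > |\overline{x}|^{5/2} = \lambda_1(x)^{1/2}\lambda_4(x)^{1/2}$. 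The point is that $\log \lambda_t(x)$, being a supremum of affine functions of $t$, is convex in $t$ itself, not in $\log t$. So the lemma as stated contains a typo; both you and the paper are in fact proving the arithmetic-mean version, which is what the subsequent corollaries actually use. You should note this discrepancy explicitly rather than silently substituting one formula for the other mid-proof.
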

\begin{proof}
By continuity, it suffices to check the inequality for $x \in B_{L,E}$.
From the shape of the formula \eqref{eq:Gauss norm formula}, we may further reduce to the case where $x = \varpi^n [\overline{x}_n]$ for some $n \in \ZZ$, $\overline{x}_n \in L$. But in this case, the desired inequality becomes an equality.
\end{proof}
\begin{cor}\label{C:Banach to Frechet}
We have $\lambda_{I} = \sup\{\lambda_t: t \in I\}$.
\end{cor}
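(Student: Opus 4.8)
The statement is an immediate consequence of Lemma~\ref{L:Hadamard}, so the ``proof'' is really just unwinding the definition $\lambda_I = \max\{\lambda_s,\lambda_r\}$. I would argue the two inequalities separately. First, since $s,r \in I$, the supremum $\sup\{\lambda_t(x) : t \in I\}$ is at least $\max\{\lambda_s(x),\lambda_r(x)\} = \lambda_I(x)$ for every $x \in B^I_{L,E}$, giving $\sup\{\lambda_t : t \in I\} \geq \lambda_I$ with no work.

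For the reverse inequality, fix $x \in B^I_{L,E}$ and $t \in I = [s,r]$. If $s = r$ there is nothing to prove, so assume $s < r$. The map $c \mapsto s^c r^{1-c}$ is a continuous strictly monotonic bijection from $[0,1]$ onto $[s,r]$, so there is a (unique) $c \in [0,1]$ with $t = s^c r^{1-c}$. Applying Lemma~\ref{L:Hadamard} with $t_1 = s$, $t_2 = r$ gives
\[
\lambda_t(x) \leq \lambda_s(x)^c \lambda_r(x)^{1-c} \leq \lambda_I(x)^c \lambda_I(x)^{1-c} = \lambda_I(x),
\]
where the middle step uses $\lambda_s(x),\lambda_r(x) \leq \max\{\lambda_s(x),\lambda_r(x)\} = \lambda_I(x)$. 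Taking the supremum over $t \in I$ yields $\sup\{\lambda_t : t \in I\} \leq \lambda_I$, and combining with the first inequality completes the proof.

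\textbf{Where the difficulty (such as it is) lies.} There is no real obstacle here: all the analytic content has been absorbed into Lemma~\ref{L:Hadamard}, and what remains is the elementary observation that a geometric mean of two numbers lies between them, together with the fact that every $t \in [s,r]$ is realized as a geometric mean $s^c r^{1-c}$. The only point worth a word of care is that $\lambda_t$ for intermediate $t$ is to be understood as the continuous extension to $B^I_{L,E}$ furnished (via $\lambda_t \leq \lambda_I$ on $B_{L,E}$) by Lemma~\ref{L:Hadamard} itself, so that the supremum on the right-hand side is taken over genuinely defined functions on $B^I_{L,E}$.
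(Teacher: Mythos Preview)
Your argument is correct and is exactly the intended one: the paper states the result as an immediate corollary of Lemma~\ref{L:Hadamard} with no further proof, and your write-up simply makes explicit the two-line deduction (trivial inequality from $s,r\in I$; reverse inequality from the geometric-mean bound). Your closing remark about $\lambda_t$ being well defined on the completion $B^I_{L,E}$ via $\lambda_t \leq \lambda_I$ on $B_{L,E}$ is also the right way to handle that point.
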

\begin{cor}
For any closed subinterval $I'$ of $I$, there is a natural injective map $B^I_{L,E} \to B^{I'}_{L,E}$.
\end{cor}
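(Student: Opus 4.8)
The plan is to construct the map as the completion of the identity map on $B_{L,E}$, and then to deduce injectivity by reading off Teichm\"uller expansions in both rings.

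First I would record the norm comparison $\lambda_{I'} \leq \lambda_I$ on $B_{L,E}$. Write $I = [s,r]$ and $I' = [s',r']$, so that $s \leq s' \leq r' \leq r$ and in particular $s', r' \in I$; then by Corollary~\ref{C:Banach to Frechet} we have $\lambda_{s'}, \lambda_{r'} \leq \sup\{\lambda_t : t \in I\} = \lambda_I$, whence $\lambda_{I'} = \max\{\lambda_{s'}, \lambda_{r'}\} \leq \lambda_I$ (alternatively this follows directly from Lemma~\ref{L:Hadamard} upon writing $s'$ and $r'$ as geometric means of $s$ and $r$). Hence the identity map on the dense subring $B_{L,E}$ is bounded as a map $(B_{L,E}, \lambda_I) \to (B_{L,E}, \lambda_{I'})$, and so extends uniquely to a bounded ring homomorphism $\phi \colon B^I_{L,E} \to B^{I'}_{L,E}$; this construction is manifestly natural in the pair $I \supseteq I'$ and compatible with composing inclusions of intervals.

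For injectivity I would pass to coefficients. Just as for $A^r_{L,E}$ in \S\ref{sec:notations} (and compatibly with the expansion of elements of $B_{L,E}$, and also with the identifications of $B^I_{L,E}$ recalled in Remark~\ref{R:identify rings}), each element of $B^I_{L,E}$ admits a \emph{unique} expansion $\sum_{n \in \ZZ} \varpi^n[\overline{x}_n]$ with $\overline{x}_n \in L$ for which $\max\{p^{-n}|\overline{x}_n|^s, p^{-n}|\overline{x}_n|^r\} \to 0$ as $|n| \to \infty$, and the formula \eqref{eq:Gauss norm formula} continues to compute $\lambda_s$, $\lambda_r$, and hence $\lambda_I$, of such an element. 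Since $s', r' \in [s,r]$, one has $p^{-n}|\overline{x}_n|^{s'}, p^{-n}|\overline{x}_n|^{r'} \leq \max\{p^{-n}|\overline{x}_n|^s, p^{-n}|\overline{x}_n|^r\}$ for every $n$, so the same series also meets the decay condition defining $B^{I'}_{L,E}$; being continuous and restricting to the identity on $B_{L,E}$, the map $\phi$ sends $\sum_n \varpi^n[\overline{x}_n]$ to the series with the same coefficients. Thus if $\phi(x) = 0$, then evaluating $\lambda_{s'}$ of $\phi(x)$ by \eqref{eq:Gauss norm formula} in $B^{I'}_{L,E}$ gives $\max_n p^{-n}|\overline{x}_n|^{s'} \leq \lambda_{I'}(\phi(x)) = 0$, which forces $\overline{x}_n = 0$ for all $n$, i.e.\ $x = 0$.

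The only step that is not a formal manipulation of norms is the input just used: the two-sided Teichm\"uller expansion of $B^I_{L,E}$ and the validity of \eqref{eq:Gauss norm formula} on it. The delicate point is that addition in $W(\gotho_L)_E$ is not coefficientwise, so one cannot read the limiting coefficients off a $\lambda_I$-Cauchy sequence drawn from $B_{L,E}$ directly; instead one combines the $\varpi$-adic completeness and separatedness of $W(L)_E$ (which provide the termwise Teichm\"uller expansion) with near-additivity estimates of the type \eqref{eq:near additivity} to see that $\lambda_I$-convergence is equivalent to coefficientwise convergence with the stated decay and that \eqref{eq:Gauss norm formula} passes to the limit. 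This is the same package of homogeneity arguments already used in \S\ref{sec:notations} for $A^r_{L,E}$, so it is routine; with it in hand, the remainder of the argument is formal. Alternatively one may simply quote it via Remark~\ref{R:identify rings}.
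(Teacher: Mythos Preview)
Your construction of the map via the norm inequality $\lambda_{I'} \leq \lambda_I$ is exactly what the paper does. For injectivity, however, the paper takes a different and shorter route: given $x \in B^I_{L,E}$ mapping to zero in $B^{I'}_{L,E}$, one has $\lambda_t(x) = 0$ for all $t \in I'$, and then Lemma~\ref{L:Hadamard} (log-convexity of $t \mapsto \lambda_t(x)$) forces $\lambda_t(x) = 0$ for all $t$ in the interior of $I$; continuity in $t$ then gives $\lambda_I(x) = 0$, whence $x = 0$. This uses only the norm structure already in place and avoids any appeal to Teichm\"uller expansions on $B^I_{L,E}$.

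Your argument is also correct, but it trades the single application of Hadamard for a structural input---the two-sided Teichm\"uller expansion of elements of $B^I_{L,E}$ together with the validity of \eqref{eq:Gauss norm formula} there---which, as you note, is not proved in this paper and must be imported (e.g.\ via Remark~\ref{R:identify rings} and \cite{part1}) or reproved. The payoff is a more explicit description of the map as coefficient-preserving; the cost is that the proof is no longer self-contained relative to the paper. If you want to stay within the paper's framework, the Hadamard argument is preferable.
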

\begin{proof}
The existence of the map follows from Corollary~\ref{C:Banach to Frechet}.
To check injectivity, suppose $x \in B^I_{L,E}$ maps to zero in $B^{I'}_{L,E}$. Then $\lambda_t(x) = 0$ for all $t \in I'$, but by Lemma~\ref{L:Hadamard} this also implies
$\lambda_t(x) = 0$ for all $t$ in the interior of $I$. By continuity, this implies $\lambda_t(x) = 0$ for all $t \in I$, whence $x = 0$.
\end{proof}

\begin{defn} \label{D:Newton polygon extended}
For $x \in B^I_{L,E}$ nonzero, define the \emph{Newton polygon} of $x$ by choosing some $x' \in B_{L,E}$ with $\lambda_t(x'-x) < \lambda_t(x)$ for all $t \in I$, forming the Newton polygon of $x'$, then discarding segments corresponding to slopes not in $I$. Note that this  construction does not depend on the choice of $x'$, and inherits the multiplicativity properties from the corresponding definition for $B_{L,E}$ (Lemma~\ref{L:additivity of slopes}). We define the \emph{multiplicity} of slopes as before, and the \emph{degree} of $x$ as the sum of all multiplicities, which is again a nonnegative integer.
\end{defn}

\begin{lemma} \label{L:unit}
A nonzero element $x \in B^I_{L,E}$ is a unit if and only if its degree is $0$.
\end{lemma}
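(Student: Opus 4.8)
The plan is to prove the two implications of Lemma~\ref{L:unit} separately, using the multiplicativity of the degree function established in Definition~\ref{D:Newton polygon extended} (inherited from Lemma~\ref{L:additivity of slopes}) for the easy direction, and the Euclidean division machinery for the hard direction.

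For the forward implication, suppose $x \in B^I_{L,E}$ is a unit, with inverse $y$. Then $xy = 1$, and since the constant $1$ has empty Newton polygon and hence degree $0$, additivity of degrees (the analogue of Lemma~\ref{L:additivity of slopes} asserted in Definition~\ref{D:Newton polygon extended}) forces $\deg(x) + \deg(y) = 0$. Since degrees are nonnegative integers, $\deg(x) = 0$.

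For the converse, suppose $\deg(x) = 0$; I want to show $x$ is a unit. The idea is to reduce to the Euclidean structure on $A^r_{L,E}$ from Corollary~\ref{C:pid1}. First I would approximate: choose $x' \in B_{L,E}$, and after clearing denominators a power $\varpi^N x'$ lying in $A_{L,E}$, with $\lambda_t(x' - x) < \lambda_t(x)$ for all $t \in I$; by Remark~\ref{R:height} (in its $B^I$ incarnation via Definition~\ref{D:Newton polygon extended}), $x'$ also has degree $0$ over the interval $I$. The key point is that having degree $0$ means the Newton polygon of $x'$ has no segments with slope in $I = [s,r]$, so after scaling by a power of $\varpi$ we may arrange that $x'$, viewed in $A^r_{L,E}$, has $\deg(x') $ equal to its $\varpi$-adic valuation plus the multiplicities of slopes in $(0, s)$ — in particular $x'$ has no slopes in $[s,r]$. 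Now in $A^r_{L,E}$, apply Proposition~\ref{P:division algorithm}: divide the unit $1$ of $A^r_{L,E}$ — no, rather, observe that since $\deg$ on $A^r_{L,E}$ makes it a Euclidean domain, and the slopes of $x'$ lying outside $I$ correspond to factors that become invertible after passing to $B^I_{L,E}$ (a unit of $W(L)_E$ localized appropriately), the element $x'$ becomes a unit in $B^I_{L,E}$. More carefully: factor $x'$ over $A^r_{L,E}$ — since it is a PID — or directly, use that $\lambda_t(x')$ is a monomial function of $t$ of the form $c \cdot p^{-\deg} $ with no breakpoints in $I$, so $x'$ admits a multiplicative inverse in the completed ring $B^I_{L,E}$ by a geometric-series argument: write $x' = c(1 + u)$ with $c$ a suitable monomial $\varpi^j[\overline{c}]$ and $\lambda_t(u) < 1$ for all $t \in I$, then $\sum_k (-u)^k$ converges in $B^I_{L,E}$. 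Finally, since $x'$ is a unit and $\lambda_t(x-x') < \lambda_t(x) = \lambda_t(x')$ for all $t$, the element $x = x'(1 + (x-x')/x')$ is also a unit.

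The main obstacle I anticipate is the step extracting, from the hypothesis $\deg(x) = 0$, a clean multiplicative expression $x' = (\text{monomial}) \cdot (1 + u)$ with $\lambda_t(u) < 1$ throughout $I$: one must carefully track how the Newton polygon slopes of $x'$ outside $I$ interact with the Gauss norms $\lambda_t$ for $t \in I$, and verify that the absence of slopes in the closed interval $I$ (not merely the open interior) genuinely produces a uniform strict inequality $\lambda_t(u) < 1$ rather than merely $\le 1$. This is where convex duality and the precise endpoint behavior of the Newton polygon enter, and where the distinction between $B^I_{L,E}$ (closed interval) and the open-interval Robba ring matters. The approximation bookkeeping — ensuring all the strict inequalities $\lambda_t(x - x') < \lambda_t(x)$ hold simultaneously for all $t \in I$, which is legitimate by Corollary~\ref{C:Banach to Frechet} and compactness of $I$ — is routine by comparison.
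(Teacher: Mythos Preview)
Your proposal is correct and, once the detours through Euclidean division and PID factorization in $A^r_{L,E}$ are abandoned (as you ultimately do), it is essentially the paper's own argument: from $\deg(x)=0$ extract a single monomial $\varpi^n[\overline{x}_n]$ with $\lambda_t(x-\varpi^n[\overline{x}_n])<\lambda_t(\varpi^n[\overline{x}_n])$ for all $t\in I$, then invert by a geometric series. Your anticipated obstacle about strictness at the endpoints of $I$ is the only real content, and the paper simply asserts it; it follows from the convex-hull definition of the Newton polygon, since a supporting line of slope $s$ or $r$ touching two of the points would produce a segment of that slope and hence positive multiplicity.
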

\begin{proof}
If $x$ is a unit, it must have degree $0$ by the multiplicativity property of Newton polygons (Definition~\ref{D:Newton polygon extended}). Conversely, if $x$ has degree 0, then for some $n \in \ZZ$, $\overline{x}_n \in L^\times$ we have $\lambda_t(x - \varpi^n [\overline{x}_n]) < \lambda_t(\varpi^n [\overline{x}_n])$ for all $t \in I$, so we may compute an inverse of $x$ using a convergent geometric series.
\end{proof}

\begin{lemma} \label{L:Robba localizations}
Choose $\overline{z} \in L$ with $\left| \overline{z} \right| = c \in (0,1)$. Then for $\rho \in (0,1)$, we have isomorphisms of Banach rings
\begin{align*}
B^I_{L,E}\{T/\rho\}/(T-[\overline{z}]) &\cong B^{I'}_{L,E}, \qquad &I' = I \cap [\log_c \rho, +\infty), \\
B^I_{L,E}\{T/\rho^{-1}\}/(T-[\overline{z}^{-1}]) &\cong B^{I''}_{L,E}, \qquad &I'' = I \cap (0, \log_c \rho], \\
A^r_{L,E}\{T\}/(pT-[\overline{z}^n]) &\cong B^{I'''}_{L,E}, \qquad & I''' =[- n^{-1} \log_c p, r],
\end{align*}
interpreting $B^*_{L,E}$ as $0$ if $*$ is empty.
Moreover, in case $\rho \in p^{\QQ}$,
the integral closures of the images of $B^{I,+}_{L,E}$ in $B^{I'}_{L,E}, B^{I''}_{L,E}$
are respectively $B^{I',+}_{L,E}, B^{I'',+}_{L,E}$.
\end{lemma}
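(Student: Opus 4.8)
The plan is to verify each of the three displayed isomorphisms directly, then handle the integral closure statement as a separate, more delicate matter.

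\medskip

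\textbf{The three isomorphisms.} For the first one, observe that $B^I_{L,E}\{T/\rho\}$ is the completion of $B^I_{L,E}[T]$ with respect to the norm sending $\sum c_j T^j$ to $\max_j \lambda_I(c_j)\rho^j$. Setting $T = [\overline{z}]$ defines a map to $B_{L,E}$; I would first argue that this map extends continuously to the interval $I' = I \cap [\log_c \rho, +\infty)$, using that $\lambda_t([\overline{z}]) = c^t \leq \rho$ exactly when $t \geq \log_c \rho$, so that the series $\sum c_j [\overline{z}]^j$ converges in $B^{I'}_{L,E}$ and its norm is bounded by the quotient norm. For surjectivity, every element of $B^{I'}_{L,E}$ can be approximated by elements of $B_{L,E}$, which are finite sums $\sum_n \varpi^n [\overline{x}_n]$; writing $[\overline{x}_n] = [\overline{x}_n \overline{z}^{-j}]\,[\overline{z}]^j = [\overline{x}_n\overline{z}^{-j}] T^j$ modulo $(T - [\overline{z}])$ for a suitable large $j$ lets one lift into $B^I_{L,E}\{T/\rho\}$ while controlling norms. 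For injectivity one uses the division-type argument: an element of the kernel of the quotient map, when pushed into each $B^{\{t\}}_{L,E}$ with $t \in I'$, must vanish, and one runs a Newton-polygon / geometric-series estimate to conclude it is already a multiple of $T - [\overline{z}]$ in the Banach completion. The second isomorphism is entirely parallel with $[\overline{z}]$ replaced by $[\overline{z}^{-1}]$ (note $\lambda_t([\overline{z}^{-1}]) = c^{-t}$, which is $\leq \rho^{-1}$ iff $t \leq \log_c \rho$, giving $I''$). The third is the same mechanism with the "Frobenius-type" relation $pT = [\overline{z}^n]$: here $\lambda_t(p) = p^{-1}$ and $\lambda_t([\overline{z}^n]) = c^{nt}$, and one checks that adjoining $T = [\overline{z}^n]/p$ forces $\lambda_t(T) \leq 1$ exactly when $c^{nt} \leq p^{-1}$, i.e. $t \geq -n^{-1}\log_c p$, so one lands in $B^{I'''}_{L,E}$ with $I''' = [-n^{-1}\log_c p, r]$; I should keep track of the fact that one starts from $A^r_{L,E}$ (integral coefficients) rather than $B^r_{L,E}$, which is why the left endpoint is determined by the $p$-adic constraint rather than being $0$.

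\medskip

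\textbf{The integral closure statement.} Assume now $\rho \in p^{\QQ}$, so that $I'$ and $I''$ have rational endpoints compatible with the value group. The claim is that the integral closure of the image of $B^{I,+}_{L,E}$ inside $B^{I'}_{L,E}$ is exactly $B^{I',+}_{L,E}$ (and similarly for $I''$). One inclusion is soft: $B^{I',+}_{L,E}$ consists of elements of $\lambda_{I'}$-norm $\leq 1$, each such element (approximated by $B_{L,E}$-elements) is integral over the image of $B^{I,+}_{L,E}$ because its powers remain norm-bounded and one can exhibit a monic relation using that the relation $T = [\overline{z}]$ has $\lambda_{I'}([\overline{z}]) = c^{\inf I'}$; more precisely, at the endpoint $t_0 = \log_c \rho$ of $I'$ not in $I$, the extra valuations that appear are rational multiples of $\log_c \rho \in \QQ \log_c p$, so suitable powers of the generator satisfy monic equations over the base. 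The reverse inclusion — that anything integral over $B^{I,+}_{L,E}$ already has $\lambda_{I'}$-norm $\leq 1$ — follows from power-multiplicativity of $\lambda_{I'}$ (Corollary~\ref{C:Banach to Frechet} and Lemma~\ref{L:Hadamard}): if $x^m + a_{m-1}x^{m-1} + \cdots + a_0 = 0$ with $\lambda_I(a_j) \leq 1$, hence $\lambda_{I'}(\text{image of }a_j) \leq 1$, then $\lambda_{I'}(x)^m \leq \max_j \lambda_{I'}(x)^j$ forces $\lambda_{I'}(x) \leq 1$. I would phrase the "integral closure of the image" carefully, noting the image of $B^{I,+}_{L,E}$ need not itself be integrally closed, and that the rationality hypothesis on $\rho$ is what guarantees the generator $[\overline{z}]$ (or its relevant power) is integral over the base rather than merely in the normalization of its fraction field.

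\medskip

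\textbf{Main obstacle.} The routine bookkeeping is in matching the intervals and norm bounds in the three isomorphisms; the genuinely delicate point is the integral-closure assertion, specifically producing an explicit monic equation over $B^{I,+}_{L,E}$ for a general element of $B^{I',+}_{L,E}$ rather than just over its image in a rational localization. The role of $\rho \in p^{\QQ}$ is precisely to make this work: it ensures the new slope endpoint lies in the divisible hull of the value group generated by $p$, so that some power of $[\overline{z}]$ has norm an integer power of $\lambda_I$-values and the integrality can be witnessed concretely. I expect this to require a short lemma computing the normalization of $\gotho_{B^{I,+}}[T]/(T-[\overline{z}])$ in terms of Newton polygons, which is where most of the real work lies.
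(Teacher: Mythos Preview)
Your plan for the three isomorphisms is essentially the paper's approach: define the evaluation map $T\mapsto[\overline z]$, identify the target interval via the inequality $\lambda_t([\overline z])\le\rho\iff t\ge\log_c\rho$, lift monomials $\varpi^n[\overline x_n]$ to $\varpi^n[\overline x_n\overline z^{-j}]T^j$ for surjectivity, and invert $T-[\overline z]$ termwise for injectivity. Two small points you omit but the paper handles: one must first check that $(T-[\overline z])$ is a closed ideal (the paper does this by observing that multiplication by $T-[\overline z]$ is a strict injection, via multiplicativity of the pointwise Gauss norms), and one should dispose of the case $I'=\emptyset$ separately (then $T-[\overline z]$ is already a unit). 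Neither is difficult, but both are needed for the quotient to be a Banach ring and for the statement to parse.

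For the integral closure, your framing makes the problem harder than it is. You do not need a monic equation for a \emph{general} element of $B^{I',+}_{L,E}$, nor a structural lemma about the normalization of $\gotho_{B^{I,+}}[T]/(T-[\overline z])$. Since the integral closure is a subring and $B^{I',+}_{L,E}$ is topologically generated over the image of $B^{I,+}_{L,E}$ by Teichm\"uller monomials, it suffices to treat a single monomial $x=\varpi^n[\overline x_n]$ with $\lambda_{I'}(x)=1$. The paper's move is then simply to show that a suitable power $x^k$ admits a lift $z=yT^j\in B^I_{L,E}\{T/\rho\}$ with $|z|_\rho=1$: after raising to a power one can arrange $|\overline x_n\overline z^{-j}|=1$ exactly (this is where $\rho\in p^{\QQ}$ enters, so that the value groups match), and then $|z|_\rho=\rho^j p^{-n}=\lambda_{t_0}(x)=1$ by direct computation. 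This replaces your proposed normalization lemma with a two-line calculation on monomials; your identification of the role of the rationality hypothesis is correct, but the mechanism is this concrete lift rather than an abstract Newton-polygon argument.
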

\begin{proof}
We check only the first case in detail, the other cases being similar.
Put $t_0 = \log_c \rho$.
If $I' = \emptyset$, then $T - [\overline{z}] = [\overline{z}](1 - [\overline{z}^{-1}] T)$ and $\left| [\overline{z}^{-1}] T \right|_\rho < 1$,
so $T- [\overline{z}]$ is a unit in $B^I_{L,E}/\{T/\rho\}$ and so 
both sides of the desired equality are zero.
We may thus assume hereafter that $I' \neq \emptyset$,
so that there is a well-defined map $B^I_{L,E}\{T/\rho\} \to B^{I'}_{L,E}$ taking $T$ to $[\overline{z}]$.

For $x,y \in B^I_{L,E}\{T/\rho\}$ with $y = (T-[\overline{z}]) x$,
applying the multiplicative property of Gauss norms and then taking suprema yields
$\left| y \right|_\rho \geq \left| x \right|_\rho$
(compare \cite[Lemma~2.8.8]{part1}).
This means that multiplication by $T-[\overline{z}]$ is a strict injective endomorphism of $B^I_{L,E}\{T/\rho\}$. In particular, the ideal $(T - [\overline{z}])$ is closed, so $B^I_{L,E}\{T/\rho\}/(T-[\overline{z}])$ is a Banach ring. 

Next, suppose that $y = \sum_{n=0}^\infty y_n T^n \in B^I_{L,E}\{T/\rho\}$ maps to zero in $B^{I'}_{L,E}$. Put
\[
x_n = -\sum_{i=0}^n y_i [\overline{z}]^{i-n-1},
\]
so that in $B^I_{L,E}\llbracket T \rrbracket$ we have $y = (T-[\overline{z}]) x$ for
$x = \sum_{n=0}^\infty x_n T^n$. For $t \in I$ with $t < t_0$,
we see as above that $T-[\overline{z}]$ is invertible in $B^{[t,t]}_{L,E}\{T/\rho\}$
and so $\rho^n \lambda_t(x_n) \to 0$. For $t \in I$ with $t \geq t_0$,
we have $\lambda_t([\overline{z}]) \leq \rho$ and so 
$B^{[t,t]}_{L,E}\{T/\rho\}/(T-[\overline{z}]) \cong B^{[t,t]}_{L,E}$;
hence $\rho^n \lambda_t(x_n) \to 0$ again. 
Using Corollary~\ref{C:Banach to Frechet},
we conclude that
$x \in B^I_{L,E}\{T/\rho\}$, so the map $B^I_{L,E}\{T/\rho\}/(T-[\overline{z}]) \to B^{I'}_{L,E}$ is injective.

Next, put $x = \varpi^n [\overline{x}_n]$ for some $n \in \ZZ$, $\overline{x}_n \in L$.
Let $j$ be the smallest nonnegative integer such that $c^{-j} \left| \overline{x}_n \right| \geq 1$ and take $y = \varpi^n [\overline{x}_n \overline{z}^{-j}]$.
For $t \in I$ with $t \leq t_0$, we have $\rho^j \lambda_t(y) \leq \rho^j \lambda_{t_0}(y) = \lambda_{t_0}(x)$.
For $t \in I$ with $t > t_0$, in case $j=0$ we obviously have $\lambda_t(y) = \lambda_t(x)$;
otherwise, we have $c^{-j+1} \left| \overline{x}_n \right| < 1$ and so
$\rho^j \lambda_t(y) < c^{t_0-t} \lambda_{t_0}(x)$.
For $z = y T^j \in B^I_{L,E}\{T/\rho\}$, we therefore have
\begin{equation} \label{eq:Robba localization lift}
\left| z \right|_\rho \leq c^{t_0-t} \lambda_{I'}(x).
\end{equation}
By \eqref{eq:Gauss norm formula}, we may then lift any $x \in B^{I'}_{L,E}$ to $z \in B^I_{L,E}\{T/\rho\}$ so that \eqref{eq:Robba localization lift} remains true.
This implies that $B^I_{L,E}\{T/\rho\}/(T-[\overline{z}]) \cong B^{I'}_{L,E}$
is strict surjective.

To conclude, it is sufficient to check that if $\rho \in p^\QQ$, then for any $x = \varpi^i [\overline{x}_n]$ with $\lambda_{I'}(x) = 1$, we can lift some power of $x$ to $z \in B^{I}_{L,E}\{T/\rho\}$ with $\left| z \right|_\rho = 1$.
Set notation as above. If $j=0$, then 
\[
\lambda_{I'}(x) = \lambda_s(x) = \lambda_s(y) = \lambda_I(y) = \left| z \right|_\rho.
\]
If $j>0$, then $\lambda_{I'}(x) = \lambda_{t_0}(x) = p^{-n} \left| \overline{x}_n \right|^{t_0}$. Since $\lambda_{I'}(x) = 1$ and $\rho \in p^{\QQ}$, after raising $x$ to a suitable power we have $\left| \overline{x}_n \overline{z}^{-j} \right| = 1$, so
\[
\left| z \right|_\rho = \rho^j p^{-n} = \lambda_{t_0}(x) = \lambda_{I'}(x).
\]
This completes the proof.
\end{proof}

By combining Theorem~\ref{T:strongly noetherian Robba} and
Lemma~\ref{L:Robba localizations}, we obtain the following result.
\begin{theorem} \label{T:strongly noetherian Robba2}
View $B^I_{L,E}$ as a Banach ring using the norm $\lambda_I$. Then for any nonnegative integer $n$ and any positive real numbers $\rho_1,\dots,\rho_n$,
the ring $B^I_{L,E}\{T_1/\rho_1,\dots,T_n/\rho_n\}$ is noetherian.
\end{theorem}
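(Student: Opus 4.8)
The plan is to deduce the theorem from Theorem~\ref{T:strongly noetherian Robba} and Lemma~\ref{L:Robba localizations}: I would present $B^I_{L,E}\{T_1/\rho_1,\dots,T_n/\rho_n\}$ as a quotient ring of a Tate algebra over $A^r_{L,E}$ that is already known to be noetherian, and then invoke the fact that a quotient of a noetherian ring is noetherian.

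First I would realize $B^I_{L,E}$ itself as a strict quotient of a two-variable Tate algebra over $A^r_{L,E}$. Write $I = [s,r]$ and fix $\overline{z} \in L$ with $c := \left| \overline{z} \right| \in (0,1)$, which exists because the norm on $L$ is nontrivial. Since $\log_c p < 0$, the quantity $-n^{-1}\log_c p$ is positive and tends to $0$ as $n \to \infty$, so I can choose a positive integer $n$ with $s_0 := -n^{-1}\log_c p \leq s$; the third isomorphism of Lemma~\ref{L:Robba localizations} then identifies $B^{[s_0,r]}_{L,E}$ with the strict quotient $A^r_{L,E}\{U\}/(pU-[\overline{z}^n])$. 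Next set $\rho_0 := c^s \in (0,1)$, so that $\log_c \rho_0 = s$ and hence $[s_0,r] \cap [\log_c \rho_0, +\infty) = [s,r] = I$ (using $s_0 \leq s \leq r$); the first isomorphism of Lemma~\ref{L:Robba localizations}, applied with $B^{[s_0,r]}_{L,E}$ in the role of the base ring, then identifies $B^I_{L,E}$ with the strict quotient $B^{[s_0,r]}_{L,E}\{V/\rho_0\}/(V-[\overline{z}])$. Composing these two presentations exhibits $B^I_{L,E}$ as a strict quotient of $A^r_{L,E}\{U\}\{V/\rho_0\}$, which coincides with $A^r_{L,E}\{U,V/\rho_0\}$ by the standard identification of an iterated completed polynomial ring with the joint one.

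Next I would adjoin the variables $T_1,\dots,T_n$. Applying the functor $-\{T_1/\rho_1,\dots,T_n/\rho_n\}$ to a strict surjection of commutative nonarchimedean Banach rings again yields a surjection: given an element $\sum_I c_I T^I$ of the target with $\left| c_I \right| \rho_1^{i_1}\cdots \rho_n^{i_n} \to 0$, lift each $c_I$ to an element of the source of comparable norm, so that $\sum_I (\text{lift of } c_I) T^I$ converges and maps to the given element. Hence $B^I_{L,E}\{T_1/\rho_1,\dots,T_n/\rho_n\}$ is a quotient of $A^r_{L,E}\{U,V/\rho_0\}\{T_1/\rho_1,\dots,T_n/\rho_n\} = A^r_{L,E}\{U,V/\rho_0,T_1/\rho_1,\dots,T_n/\rho_n\}$, and the latter is noetherian by Theorem~\ref{T:strongly noetherian Robba} applied with $n+2$ variables and weights $1,\rho_0,\rho_1,\dots,\rho_n$. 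Since any quotient of a noetherian ring is noetherian, this completes the argument.

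The substantive input is entirely contained in Theorem~\ref{T:strongly noetherian Robba} and Lemma~\ref{L:Robba localizations}; the only points requiring care in the combination are the bookkeeping of the interval endpoints in the choice of $n$ and $\rho_0$ (so that one lands on exactly $I=[s,r]$ and not a larger interval) and the routine verification that strict surjections of Banach rings induce surjections on Tate algebras. I do not anticipate any genuine obstacle beyond this.
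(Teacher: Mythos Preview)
Your proposal is correct and is exactly the argument the paper has in mind: the paper's proof is literally the single sentence ``By combining Theorem~\ref{T:strongly noetherian Robba} and Lemma~\ref{L:Robba localizations}, we obtain the following result,'' and your write-up supplies precisely the details of that combination (present $B^I_{L,E}$ as a quotient of $A^r_{L,E}\{U,V/\rho_0\}$ via the third and then the first isomorphism of Lemma~\ref{L:Robba localizations}, then pass to Tate algebras in the extra variables). One cosmetic point: you use $n$ both for the exponent in $[\overline{z}^n]$ and for the number of variables $T_1,\dots,T_n$; rename one of them.
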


\begin{remark}
The fact that the rings $B^I_{L,E}\{T_1/\rho_1,\dots,T_n/\rho_n\}$ are noetherian for $\rho_1 = \cdots = \rho_n = 1$ means that $B^I_{L,E}$ is strongly noetherian in the sense of Huber. However, we do not know how to deduce this directly from the restricted form of Theorem~\ref{T:strongly noetherian Robba} in which one only allows $\rho_1 = \cdots = \rho_n = 1$: we need to allow arbitrary $\rho$ in order to fix the left endpoint of the interval $I$ using Lemma~\ref{L:Robba localizations}.
We also do not know how to give a direct proof of Theorem~\ref{T:strongly noetherian Robba2} in the style of the proof of Theorem~\ref{T:strongly noetherian Robba} except in the case where $I = [r,r]$ consists of a single point, in which case $\lambda_I = \lambda_r$ is again multiplicative.
\end{remark}

\begin{remark}
One can make sense of $B^{[0,r]}_{L,E}$ by identifying it with $B^r_{L,E} = A^r_{L,E}[\varpi^{-1}]$. This gives a Banach ring for the norm $\max\{\lambda_0, \lambda_r\}$; note that $\lambda_0$ is the $\varpi$-adic absolute value. Of course $B^{[0,r]}_{L,E}$ is noetherian because $A^r_{L,E}$ is. However, due to the mismatch of topologies, we do not know how to prove that $B^{[0,r]}_{L,E}$ is strongly noetherian.
\end{remark}

\begin{remark} \label{R:regular excellent}
It is natural to ask whether the rings $B^I_{L,E}\{T_1/\rho_1,\dots,T_n/\rho_n\}$ are not only noetherian, but also regular and excellent. We have not considered this question further.
\end{remark}

\begin{remark}
One can also make sense of $B^{[r,+\infty]}_{L,E}$ by rescaling the Gauss norms, e.g., by setting
\[
\lambda_t \left( \sum_{n \in \ZZ} \varpi^n [\overline{x}_n] \right) = \sup\{
p^{-n/(1+t)} \left| \overline{x}_n \right|^{t/(1+t)}
\}
\]
so that
\[
\lambda_\infty \left( \sum_{n \in \ZZ} \varpi^n [\overline{x}_n] \right) = \sup\{
\left| \overline{x}_n \right|
\}.
\]
However, the resulting ring can be shown to be nonnoetherian, by exploiting the existence of elements with infinitely many distinct slopes in their Newton polygons (or equivalently, the fact that the maxima have become suprema).
\end{remark}

\section{A descent construction}
\label{sec:descent}

Before continuing, we record a descent argument which will allow us to freely enlarge the field $L$ in what follows.

\begin{convention}
We adopt conventions concerning Banach rings, adic Banach rings, Gel'fand spectra, and adic spectra as in \cite{part1}. In particular, we write $\calM(R)$ for the Gel'fand spectrum of the Banach ring $R$ and $\Spa(R,R^+)$ for the adic spectrum of the adic Banach ring $(R,R^+)$. Each $\beta \in \calM(R)$ is a multiplicative seminorm on $R$;
we write $\calH(\beta)$ for the completion of $\Frac(R/\ker(\beta))$ with respect to the induced multiplicative norm.
\end{convention}

\begin{hypothesis} \label{H:extension}
Throughout \S\ref{sec:descent}, let $L'$ be a perfect overfield of $L$ which is complete with respect to a multiplicative nonarchimedean norm extending the norm on $L$.
\end{hypothesis}

\begin{lemma} \label{L:tensor product}
The following statements hold.
\begin{enumerate}
\item[(a)]
The tensor product seminorms on $L' \otimes_L L'$ and $L' \otimes_L L' \otimes_L L'$
are power-multiplicative.
\item[(b)]
The simplicial exact sequence
\[
0 \to L \to L' \otimes_L L' \to L' \otimes_L L' \otimes_L L'
\]
is almost optimal; 
that is, the quotient and subspace seminorms at each point coincide.
Consequently, we may complete the tensor product to obtain another almost optimal exact sequence.
\end{enumerate}
\end{lemma}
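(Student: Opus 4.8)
The plan is to reduce everything to the single most concrete input: the norm on $L$ is multiplicative, and $L'$ is a perfect complete extension with a multiplicative norm restricting to it. For part~(a), I would argue that a tensor product of normed $L$-algebras, each carrying a multiplicative norm, has power-multiplicative tensor product seminorm provided $L$ itself is a nonarchimedean field with multiplicative norm. The cleanest route is spectral: for a commutative Banach $L$-algebra $R$, the tensor product (or spectral) seminorm is power-multiplicative if and only if it agrees with $\sup_{\beta \in \calM(R)} \beta$. So I would identify $\calM(L' \otimes_L L')$ with pairs of points of $\calM(L')$ lying over a common point of $\calM(L)$ — but $\calM(L)$ is a single point since the norm on $L$ is multiplicative — and then invoke the standard fact (as in \cite{part1}, or \cite{bgr} in the classical case) that for an extension of complete nonarchimedean fields the tensor product seminorm is the supremum of its bounded multiplicative seminorms, hence power-multiplicative. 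The triple tensor product $L' \otimes_L L' \otimes_L L'$ is handled identically, viewing it as $(L' \otimes_L L') \otimes_L L'$ and noting that the left factor, while possibly not a field, is a Banach $L$-algebra whose seminorm is already known to be power-multiplicative by the two-factor case.

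For part~(b), the point is that the augmented \v Cech--Amitsur complex $0 \to L \to L' \otimes_L L' \to L' \otimes_L L' \otimes_L L'$ is, up to reindexing, the degree $\leq 2$ part of the standard resolution computing descent along $L \to L'$, and it is split as a complex of $L$-vector spaces by the usual contracting homotopy coming from multiplication $L' \otimes_L L' \to L'$. The task is to check that one can choose such a splitting that is norm-nonexpansive at the level of seminorms, so that subspace and quotient seminorms coincide. Concretely: the map $L \to L' \otimes_L L'$, $\lambda \mapsto \lambda \otimes 1$, is an isometry onto its image because precomposing with multiplication $L' \otimes_L L' \to L'$ recovers the inclusion $L \hookrightarrow L'$, which is isometric, and multiplication is nonexpansive for the tensor seminorm; this forces the tensor seminorm on $\lambda \otimes 1$ to equal $|\lambda|$, giving the isometry onto the image and hence optimality of $0 \to L \to L' \otimes_L L'$. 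For the second map one argues similarly: the contracting homotopy $s\colon L' \otimes_L L' \otimes_L L' \to L' \otimes_L L'$ built from multiplication in the outer two factors is nonexpansive, and $\mathrm{id} - s \circ d$ and $d \circ s$ land in the appropriate pieces, so the subspace norm on $\ker(d) = \mathrm{image}$ and the quotient norm on the source agree up to the identification; being almost optimal (equality of seminorms pointwise) then follows, and completing preserves this because completion of an almost optimal sequence is almost optimal (a formal fact about completions of normed exact sequences).

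The main obstacle I expect is part~(a): showing the tensor seminorm is genuinely power-multiplicative (not merely submultiplicative) for $L' \otimes_L L'$ and its triple version. This is exactly the step where one needs $L$ to be a field with a \emph{multiplicative} norm — it fails for general base Banach rings — and the argument goes through the description of $\calM$ of a complete tensor product of field extensions over a point. I would lean on \cite[\S 3.1]{part1} (or the analogous discussion around \cite[Lemma~2.8.x]{part1}) for the statement that for complete nonarchimedean fields $K \to K_1, K \to K_2$, the seminorm on $K_1 \widehat{\otimes}_K K_2$ is power-multiplicative, and then note that $L' \otimes_L L'$ is a dense subalgebra of $L' \widehat{\otimes}_L L'$ carrying the restriction of that seminorm, so power-multiplicativity descends to it. Everything else — the isometry $L \hookrightarrow L' \otimes_L L'$, the contracting homotopy, and the stability of almost optimality under completion — is routine once this input is in hand.
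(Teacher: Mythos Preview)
The paper gives no argument of its own here; it simply cites \cite[Remark~3.1.6]{part1}. Your approach to (a) via the spectral seminorm is correct and is the standard one.

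For (b) there is a genuine gap. A minor point first: the displayed sequence appears to be misprinted---with $\lambda\mapsto\lambda(1\otimes 1)$ as the first map and the cosimplicial differential as the second, the composite $L\to L'\otimes_L L'\to L'^{\otimes 3}$ sends $\lambda$ to $\lambda(1\otimes 1\otimes 1)\neq 0$, so this is not even a complex. The application in Lemma~\ref{L:primitive descent} (lifting along $\iota_1-\iota_2\colon L'\to L'\widehat\otimes_L L'$ with norm control) shows that the intended sequence is the Amitsur complex $0\to L\to L'\to L'\otimes_L L'$. More substantively, your multiplication homotopy does not contract this complex. For instance with $s(a\otimes b\otimes c)=ab\otimes c$ one computes $sd(a\otimes b)=ab\otimes 1$, so $(\mathrm{id}-sd)(a\otimes b)=a\otimes b-ab\otimes 1$ lands in the image of $L'\to L'\otimes_L L'$, not of $L$; and one checks $dsd\neq d$, so $s$ is not a bounded section of $d$ on its image either. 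What multiplication actually contracts is the complex obtained \emph{after} base change along $L\to L'$ (the extra degeneracy works there because in degree zero one has the identity $L'\to L'$ as retraction). Transferring almost optimality back down to the original complex is precisely the nontrivial step, and your sketch does not address it; one needs either a direct spectral computation---using (a) to identify $|v\otimes 1-1\otimes v|$ with $\inf_{\lambda\in L}|v-\lambda|$---or an almost-orthogonal basis of $L'$ over $L$, which is what the cited reference supplies.
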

\begin{proof}
See \cite[Remark~3.1.6]{part1}.
\end{proof}

Using Lemma~\ref{L:tensor product}, we obtain a descent property for ideals in $W(\gotho_{L})_E$.

\begin{lemma} \label{L:primitive descent}
Equip $R = L' \widehat{\otimes}_L L'$ with the tensor product seminorm.
Let $z$ be an element of $W(\gotho_{L'})_E$ with the property that 
for any $\beta \in \calM(R)$, the two images of $z$ in $W(\gotho_{\calH(\beta)})_E$ generate the same ideal and their ratio maps to $1$ under $W(\gotho_{\calH(\beta)})_E \to W(\kappa_{\calH(\beta)})$. Then $z$ factors as a unit times an element of $W(\gotho_L)_E$.
\end{lemma}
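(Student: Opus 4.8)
The plan is to use a faithfully flat descent argument combined with the theory of Newton polygons (Definition~\ref{D:Newton polygon extended}, Lemma~\ref{L:unit}) to control the relevant ideals. First I would reduce to analyzing the Newton polygon of $z$ over $L'$. Since $z \in W(\gotho_{L'})_E$ lives inside $A^r_{L',E}$ for every $r > 0$ (indeed $W(\gotho_{L'})_E$ is the common subring where only nonnegative powers of $\varpi$ appear and the coefficients are integral), I may speak of $\deg(z)$ and of the multiplicities of its slopes. The hypothesis that the two images of $z$ in $W(\gotho_{\calH(\beta)})_E$ generate the same ideal, together with Lemma~\ref{L:additivity of slopes} (additivity of multiplicities under multiplication), forces the two pullbacks $z \otimes 1$ and $1 \otimes z$ in $W(\gotho_R)_E$ (or in $A^r_{R,E}$) to have the same Newton polygon at every point $\beta \in \calM(R)$; combined with Lemma~\ref{L:tensor product}(a), which gives that the tensor product seminorm on $R$ is power-multiplicative so that $\calM(R)$ sees enough norms, I conclude $z \otimes 1$ and $1 \otimes z$ generate the same ideal of $A^r_{R,E}$ for each $r$, hence (by passing to $B^r_{R,E}$ or directly) their ratio $u := (z \otimes 1)/(1 \otimes z)$ is a well-defined unit in $B^r_{R,E}$ for every $r$, and in fact lies in $W(\gotho_R)_E[\varpi^{-1}]$; the second hypothesis, that this ratio maps to $1$ in $W(\kappa_{\calH(\beta)})$ at every $\beta$, pins down $u$ modulo the maximal ideal everywhere.

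Next I would upgrade this pointwise information to a genuine statement over $R$ itself. The element $u \in W(\gotho_R)_E[\varpi^{-1}]$ is a unit whose reduction mod $\varpi$ (equivalently, mod the ideal generated by $\varpi$, giving $\gotho_{L'} \widehat{\otimes}_{\gotho_L} \gotho_{L'}$ on the residue level after inverting nothing, or more precisely the first Witt component) agrees with $1$ at every point of $\calM(R)$. An element of a uniform Banach ring that is a unit and whose spectral seminorm and the spectral seminorm of its inverse are both everywhere $\leq 1$, and which reduces to $1$ at every point, must itself be $1$ plus something of spectral seminorm $<1$: more carefully, writing $u = [\bar{u}_0] + \varpi(\cdots)$ in its Teichmüller expansion, the first hypothesis gives $|\bar u_0|_\beta = 1$ for all $\beta$ and the second gives that $\bar u_0$ maps to $1$ in $\kappa_{\calH(\beta)}$ for all $\beta$, i.e. $|\bar u_0 - 1|_\beta < 1$ for all $\beta$; hence $\bar u_0 - 1$ has spectral seminorm $<1$ on $\calM(R)$, which forces $\lambda_t(u - 1) < \lambda_t(1)$ for all relevant $t$ after accounting for the $\varpi$-part via a geometric-series argument as in the proof of Lemma~\ref{L:unit}. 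Thus $u$ satisfies the cocycle condition and is close enough to $1$ that its class in the relevant $H^1$ is trivial; concretely I would exhibit $u = v_1 / v_2$ with $v_i$ the two pullbacks of a single unit $v \in W(\gotho_L)_E[\varpi^{-1}]$ by appealing to the almost optimal exact sequence
\[
0 \to L \to L' \widehat{\otimes}_L L' \to L' \widehat{\otimes}_L L' \widehat{\otimes}_L L'
\]
of Lemma~\ref{L:tensor product}(b) (and its Witt-vector analogue obtained by functoriality of $W(\gotho_{\bullet})_E$ and the observation that this exact sequence is split-exact at the level of Teichmüller coefficients), checking the cocycle identity for $u$ on the triple tensor product from the hypothesis applied there. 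Descent of the unit $v$ then yields $z = v \cdot z'$ with $z' \in W(\gotho_L)_E$ by a second application of the same exact sequence to $z' := v^{-1} z$, which by construction has equal pullbacks and hence descends.

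I expect the main obstacle to be the passage from the pointwise hypotheses over the $\calH(\beta)$ to a statement over $R = L' \widehat{\otimes}_L L'$ itself, i.e. the claim that $u - 1$ has spectral seminorm $<1$: the hypotheses only give $|u-1|_\beta < 1$ at each point separately, not uniformly, and $\calM(R)$ is not compact-open in a way that obviously upgrades a pointwise strict inequality to a uniform one. Handling this carefully requires exploiting that $\bar u_0 - 1$ is an honest element of the uniform Banach ring $\gotho_{L'} \widehat{\otimes}_{\gotho_L} \gotho_{L'}$ whose image is topologically nilpotent at every point, so that by the maximum-modulus principle on the (compact) Gel'fand spectrum its spectral seminorm is a maximum and is therefore $<1$; this is where power-multiplicativity from Lemma~\ref{L:tensor product}(a) is essential, since it guarantees the spectral seminorm is computed by $\calM(R)$ and equals the quotient seminorm. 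Once that uniformity is in hand, the descent step is formal, using only the almost-optimality in Lemma~\ref{L:tensor product}(b) to split the relevant exact sequences and the functoriality of $W(\gotho_{\bullet})_E$.
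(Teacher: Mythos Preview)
Your overall architecture matches the paper's: form the ratio $u = \iota_1(z)/\iota_2(z)$, show it is a unit in $W(\gotho_R)_E$ congruent to $1$ modulo $\ker(W(\gotho_R)_E \to W(\kappa_R))$, then use Lemma~\ref{L:tensor product}(b) to descend. Your treatment of the ``main obstacle'' (upgrading the pointwise inequality $|\bar u_0 - 1|_\beta < 1$ to a uniform bound via compactness of $\calM(R)$ and power-multiplicativity) is correct and is exactly what the paper does when it invokes $\kappa_R \cong \kappa_{L'} \otimes_{\kappa_L} \kappa_{L'}$. Note also a slip: the unit $v$ you seek must lie in $W(\gotho_{L'})_E^\times$, not $W(\gotho_L)_E[\varpi^{-1}]$; if $v$ were already over $L$ then $\iota_1(v)=\iota_2(v)$ and $u=1$.

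The genuine gap is the passage from the \emph{additive} almost-optimal exact sequence to the \emph{multiplicative} statement $u = \iota_1(v)/\iota_2(v)$. You write that you would ``exhibit $u = v_1/v_2$ \dots\ by appealing to the almost optimal exact sequence of Lemma~\ref{L:tensor product}(b),'' but that sequence only lets you solve $\iota_1(w) - \iota_2(w) = (\text{additive cocycle})$ with a norm bound. The element $u-1$ is \emph{not} an additive cocycle: if $q$ is a multiplicative $1$-cocycle then $d^1(q-1) = (p_{12}^*q - 1)(p_{23}^*q - 1)$, which is nonzero (though small). So a single application of the exact sequence cannot produce $v$. In characteristic $0$ one could repair this with $\log/\exp$, but the paper explicitly allows $E$ of characteristic $p$, where no logarithm is available. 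The paper's proof supplies precisely the missing mechanism: an iterative approximation. One solves $\iota_1(v_l) - \iota_2(v_l) = q_l - 1$ approximately (with $\lambda_1(v_l)$ controlled by almost-optimality), replaces $z$ by $z/(1+v_l)$, and checks that the new ratio $q_{l+1}$ satisfies $\lambda_1(q_{l+1}-1) \leq \lambda_1(q_l-1)\cdot\lambda_1(v_l)$, hence decays geometrically; the infinite product $\prod_l(1+v_l)$ converges to the desired unit $v$. This successive-approximation step is the actual technical heart of the argument, and your proposal omits it.
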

\begin{proof}
Let $\kappa_L,\kappa_{L'}$ be the residue fields of $L, L'$.
Let $\gotho_R$ be the subring of $R$ consisting of elements of norm at most 1.
Let $\gothm_R$ be the ideal of $\gotho_R$ consisting of elements of norm strictly less than 1. Put $\kappa_R = \gotho_R/\gothm_R$.
By Lemma~\ref{L:tensor product}(a) and \cite[Theorem~2.3.10]{part1},
the tensor product seminorm on $R$ can be computed as the supremum over $\calM(R)$.
Consequently, we have a canonical isomorphism $\kappa_R \cong \kappa_{L'} \otimes_{\kappa_L} \kappa_{L'}$.

Let $\iota_1, \iota_2$ denote the two maps $L' \to R$ and also the induced maps $W(L')_E \to W(R)_E$. 
Put $q_0 = \iota_1(z)/\iota_2(z) \in W(R)_E$. By considering Newton polygons in $W(\gotho_{\calH(\beta)})_E$ for each $\beta \in \calM(R)$, we see that
in fact $q_0 \in W(\gotho_R)_E^\times$; using the condition on ratios, we see that moreover
$q_0 - 1 \in \ker(W(\gotho_R)_E \to W(\kappa_R))$.

Define the submultiplicative norm $\lambda_1$ on $W(\gotho_R)_E$ using the formula
\eqref{eq:Gauss norm formula}. We can then choose $\epsilon \in (0,1)$ such that
$\lambda_1(q_1-1) \leq \epsilon^2$.
We construct sequences $u_1=1,u_2,\ldots \in W(\gotho_{L'})_E^\times$ and $q_1, q_2, \ldots \in W(\gotho_R)_E^\times$ as follows: given $q_l$, apply Lemma~\ref{L:tensor product}(b) to construct $v_l \in W(\gotho_{L'})_E$ with $\lambda_1(v_l) \leq \epsilon^{-1} \lambda_1(q_l)$ and $\iota_1(v_l) - \iota_2(v_l) = q_l$,
then put $u_{l+1} = u_l (1 + v_l)$ and $q_{l+1} = \iota_1(z/u_{l+1})/\iota_2(z/u_{l+1})$.
We then have $\lambda_1(q_l-1) \leq \epsilon^{l+1}$ and hence $\lambda_1(v_l) \leq \epsilon^l$, so the $q_l$ converge to 1 and the $u_l$ converge to a limit $u \in W(\gotho_{L'})_E^\times$ for which $z/u \in W(\gotho_L)_E$.
\end{proof}

\begin{remark}
It was pointed out by a referee that Lemma~\ref{L:primitive descent} fails
without the ratio condition. For instance, let $L$ be the completed perfect closure of $\FF_p((t))$ and take $L' = L(t^{1/2})$, $z = [t^{1/2}]$.
\end{remark}

\section{Primitive elements of degree 1}
\label{sec:primitive}

We next focus attention on those elements of $W(\gotho_L)_E$ which behave like monic linear polynomials in the variable $p$. These elements control much of the algebra and geometry of the rings we are considering. In particular, they give rise to a deformation retraction on 
$\calM(B^I_{L,E})$ as described in \cite{kedlaya-witt}.

\begin{defn} \label{D:primitive of degree 1}
We say that $z = \sum_{n=0}^\infty \varpi^n [\overline{z}_n] \in W(\gotho_L)_E$ is \emph{primitive of degree $1$} if $\overline{z}_0 \in \gothm_L \setminus \{0\}$
and $\overline{z}_1 \in \gotho_L^\times$.
For example, $\varpi - [\overline{u}]$ is primitive of degree $1$ for any $\overline{u} \in \gothm_L \setminus \{0\}$.
For $z$ primitive of degree 1, the \emph{slope} of $z$ is the unique slope $r$ in the Newton polygon of $z$; the ring $W(\gotho_L)_E[\varpi^{-1}]/(z)$ is a perfectoid field under the quotient norm induced by $\lambda_r$ in case $E$ is of characteristic 0
\cite[Theorem~3.5.3]{part1}, and is equal to $L$ in case $E$ is of characteristic $p$.
\end{defn}

\begin{defn}
For $z \in W(\gotho_{L})_E$ primitive of degree 1 with slope $r \in I$,
let $H(z,\rho)$ be the quotient norm on $B^I_{L,E}\{T/(p^{-1} \rho)\}/(T-z)$
(interpreted as $B^I_{L,E}/(z)$ for $\rho=0$)
for the Gauss extension of $\lambda_r$.
As in \cite[Theorem~5.11]{kedlaya-witt}, this norm is multiplicative.
\end{defn}

\begin{lemma} \label{L:find primitive point}
Let $I$ be a closed interval in $(0, +\infty)$.
For any $\beta \in \calM(B^I_{L,E})$, 
there exist a perfect overfield $L'$ of $L$ complete with respect to a multiplicative nonarchimedean norm extending the one on $L$
and some $\overline{u} \in \gothm_{L'} \setminus \{0\}$ such that the restriction of $H(\varpi - [\overline{u}], 0)$
to $B^I_{L,E}$ equals $\beta$.
\end{lemma}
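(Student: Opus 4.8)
The plan is to realize $\beta$ as a point of a one-dimensional adic space obtained by localizing and cutting down, then identify that point with one coming from a primitive element of degree $1$ over a suitable extension field. First I would use the identification of $B^I_{L,E}$ with a quotient of a Tate algebra over $A^r_{L,E}$ (or over $B^r_{L,E}$), via Lemma~\ref{L:Robba localizations}, to pull $\beta$ back to a multiplicative seminorm on $A^r_{L,E}\{T\}$ or on a suitable $B^{I}_{L,E}\{T/\rho\}$. The effect of this is to reduce to understanding $\calM$ of a Tate algebra in one variable over the principal ideal domain $A^r_{L,E}$ (Corollary~\ref{C:pid1}), where every nonzero prime ideal is generated by a single element whose Newton polygon has a single slope after factoring. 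The key structural input is that elements of $W(\gotho_L)_E$, and more generally of $A^r_{L,E}$, factor (after passing to a larger field) into primitive elements of degree $1$: this is where one reduces a general point to a point attached to some $\varpi - [\overline{u}]$.

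The main technical step is the field extension. Given $\beta$, form the residue field $\calH(\beta)$, the completion of $\Frac(B^I_{L,E}/\ker\beta)$; this is a perfectoid-type field, and I expect it contains a natural copy of $L$ (since $L$ embeds in $B^I_{L,E}$ via $\overline{x} \mapsto [\overline{x}]$, and $\beta$ restricted to this copy is just the given multiplicative norm on $L$, which is already complete, hence the restriction is injective and isometric). Let $L'$ be the completed perfect closure of $\calH(\beta)$, a perfect overfield of $L$ complete for a multiplicative norm extending that of $L$ — this is the $L'$ demanded by the statement. Over $L'$, the seminorm $\beta$ should be visibly induced by evaluation: the composite $B^I_{L,E} \to B^I_{L',E} \to B^I_{L',E}/(z)$ for an appropriate primitive $z$ of degree $1$ recovers $\beta$. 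To get $z$ in the form $\varpi - [\overline{u}]$ one exploits that any primitive element of degree $1$ with slope $r$ over a field is, up to a unit and up to replacing the field by a finite extension (which is absorbed into $L'$), of the form $\varpi - [\overline{u}]$ with $|\overline{u}| = p^{-r}$; concretely, $\overline{u}$ is the image of $\varpi$ in $L' = W(\gotho_{L'})_E[\varpi^{-1}]/(z)$ read back through the Teichmüller section, using $E$-characteristic $0$ perfectoidness or the $E$-characteristic $p$ equality with $L'$ noted in Definition~\ref{D:primitive of degree 1}. By construction $H(\varpi - [\overline{u}], 0)$ is multiplicative (as recalled just before the lemma), and its restriction to $B^I_{L,E}$ agrees with $\beta$ because both are multiplicative seminorms with the same kernel and the same induced norm on the fraction field of the quotient.

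The hard part will be the bookkeeping in showing that $\ker\beta$ is generated (in the relevant localization, where $B^I_{L,E}$ becomes a principal ideal domain or a finite product of such by the forthcoming Theorem~\ref{T:strong PID}, though here one can work more directly) by a single primitive element of degree $1$, and that the slope of that element lies in $I$ rather than at an endpoint or outside — this is exactly the content of the interval truncation in Definition~\ref{D:Newton polygon extended} and needs the Hadamard three-circles estimate (Lemma~\ref{L:Hadamard}, Corollary~\ref{C:Banach to Frechet}) to control where $\beta$ can "live." One subtlety: if $\beta$ factors through $\lambda_t$ for a single $t \in I$, the argument must still produce a degree-$1$ primitive element of that slope $t$, which is fine since $t \in I$; the genuinely degenerate case is when the kernel is trivial, i.e.\ $\beta$ is a norm, but then $\beta \leq \lambda_I$ forces (via Berkovich-theoretic maximum-modulus, \cite[Theorem~2.3.10]{part1}) that $\beta$ is dominated by some $\lambda_t$, and a limiting argument over $\overline{u}$ with $|\overline{u}| \to p^{-t}$ handles it; I would phrase this using that $\calM(B^I_{L,E})$ is the image of $\calM(B^I_{L',E})$ under restriction, which is the descent statement implicit in \S\ref{sec:descent}. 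So the skeleton is: (1) embed $L \hookrightarrow \calH(\beta)$ isometrically; (2) set $L' = $ completed perfect closure of $\calH(\beta)$; (3) locate the slope $r \in I$ and a generator $z$ of the relevant prime, primitive of degree $1$; (4) normalize $z$ to $\varpi - [\overline{u}]$ over $L'$; (5) check $H(\varpi-[\overline{u}],0)|_{B^I_{L,E}} = \beta$ by comparing kernels and residue norms.
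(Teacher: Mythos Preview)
Your construction of $L'$ has a genuine gap in the mixed-characteristic case. You set $L'$ to be the completed perfect closure of $\calH(\beta)$ and claim this is a perfect overfield of $L$. But when $E$ has characteristic $0$, so does $B^I_{L,E}$ and hence $\calH(\beta)$; a characteristic-$0$ field is already perfect, so your $L'$ is just $\calH(\beta)$ itself, which is not a field of characteristic $p$ and does not contain $L$ as a subfield. The earlier step where you embed $L$ into $\calH(\beta)$ via $\overline{x}\mapsto[\overline{x}]$ is also flawed: the Teichm\"uller map is multiplicative but not additive in mixed characteristic, so it is not a ring homomorphism and does not exhibit $L$ as a subfield.

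The paper's proof avoids all of this by invoking the perfectoid correspondence (tilting). One takes $F'$ to be a completed algebraic closure of $\calH(\beta)$, which is a perfectoid field, and sets $L'=F'^\flat$, the tilt (inverse limit of $F'$ under $x\mapsto x^p$). This $L'$ is genuinely a perfect field of characteristic $p$ containing $L$, and $\overline{u}$ is obtained directly as a compatible system of $p$-power roots of $\varpi$ in $F'$, viewed as an element of $L'$. With this choice, $H(\varpi-[\overline{u}],0)$ is the quotient norm on $B^I_{L',E}/(\varpi-[\overline{u}])\cong F'$, and restricting to $B^I_{L,E}$ recovers $\beta$ tautologically. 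No analysis of kernels, PID structure, slopes, or the degenerate ``norm'' case is needed; the whole argument is three lines. Your program of finding a primitive generator of $\ker\beta$ and normalizing it is much heavier machinery for the same conclusion, and as written it does not go through without first repairing the construction of $L'$ via tilting---at which point the generator $\varpi-[\overline{u}]$ is already in hand.
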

\begin{proof}
Let $F'$ be a completed algebraic closure of $\calH(\beta)$.
Let $L'$ be the perfect field corresponding to $F'$ under the perfectoid correspondence
\cite[Theorem~3.5.3]{part1} (taking $L' = F'$ if $E$ is of characteristic $p$); recall that $L'$ may be identified set-theoretically with the the inverse limit of $F'$ under the $p$-power map. We may then take $\overline{u}$ to be a coherent sequence of $p$-power roots of $p$ in $F'$.
\end{proof}

\begin{defn} \label{D:homotopy}
Let $I$ be a closed interval in $(0, +\infty)$.
For any $\beta \in \calM(B^I_{L,E})$ and $\rho \in [0,1]$, we may define a point
$H(\beta,\rho) \in \calM(B^I_{L,E})$ by choosing $L', \overline{u}$ as in
Lemma~\ref{L:find primitive point} and taking $H(\beta,\rho)$ to be the restriction of
$H(\varpi-[\overline{u}],\rho)$.
Note that $H(\beta,0) = \beta$ while $H(\beta,1) = \lambda_r$ for $r$ equal to the slope of $\varpi - [\overline{u}]$.
 As in \cite[Theorem~7.8]{kedlaya-witt},
this construction does not depend on $L', \overline{u}$ and defines a continuous map
$H: \calM(B^I_{L,E}) \times [0,1] \to \calM(B^I_{L,E})$ satisfying
\begin{equation} \label{eq:composite homotopy}
H(H(\beta,\rho),\sigma) = H(\beta, \max\{\rho,\sigma\})
\qquad
(\beta \in \calM(B^I_{L,E});  \rho, \sigma \in [0,1]).
\end{equation}
We define the \emph{radius} of $\beta$ to be the largest $\rho \in [0,1]$ for which 
$H(\beta,\rho) = \beta$.
\end{defn}

\begin{defn} \label{D:join}
For $\beta, \gamma \in \calM(B^I_{L,E})$, let $\beta \wedge \gamma$ be the element
$H(\beta, \rho) \in \calM(B^I_{L,E})$ for the smallest value of $\rho \in [0,1]$ such that
$H(\beta,\rho) = H(\gamma,\sigma)$ for some $\sigma \in [0,1]$. Using \eqref{eq:composite homotopy}, we may see that the operation $\wedge$ is idempotent, symmetric, and associative; we call $\beta \wedge \gamma$ the \emph{join} of $\beta$ and $\gamma$ in $\calM(B^I_{L,E})$.
By reducing to the setting of Lemma~\ref{L:find primitive point}, 
one verifies that the  map $\beta \wedge \bullet: \calM(B^I_{L,E}) \to \calM(B^I_{L,E})$ is again continuous; its image may be identified with $[\rho, 1]$ for $\rho$ the radius of $\beta$ via the map $H(\beta, \bullet)$.
\end{defn}

\begin{lemma} \label{L:complementary components}
Fix $\beta \in \calM(B^I_{L,E})$. For $\gamma, \delta \in \calM(B^I_{L,E}) \setminus \{\beta\}$, write $\gamma \sim \delta$ if one of the following conditions holds:
\begin{enumerate}
\item[(a)]
$\beta \wedge \gamma \neq \beta$, $\beta \wedge \delta \neq \beta$; or
\item[(b)]
$\beta \wedge \gamma = \beta$, $\beta \wedge \delta = \beta$, and $\gamma \wedge \delta \neq \beta$.
\end{enumerate}
Then $\sim$ is an equivalence relation, and the equivalence classes under $\sim$ are
open subsets of $\calM(B^I_{L,E})$.
In particular, these equivalence classes constitute the connected components of $\calM(B^I_{L,E}) \setminus \{\beta\}$, and each of these components is also path-connected.
\end{lemma}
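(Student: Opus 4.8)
The plan is to verify the three assertions in order: that $\sim$ is an equivalence relation, that each class is open, and that the classes are precisely the connected components, with each component path-connected.

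First I would check that $\sim$ is an equivalence relation. Reflexivity and symmetry are immediate from the symmetry of $\wedge$; the content is transitivity. Here the key tool is the structure revealed by Definition~\ref{D:join}: the image of $H(\beta,\bullet)$ is a segment parametrized by $[\rho_\beta,1]$ (where $\rho_\beta$ is the radius of $\beta$), and for any $\gamma$, either $\beta\wedge\gamma\neq\beta$, meaning the ``geodesic'' from $\gamma$ to $\lambda_r$ passes through a point strictly between $\beta$ and the endpoint $\lambda_r$ (so $\gamma$ lies ``beyond $\beta$'' along a common branch), or $\beta\wedge\gamma=\beta$, meaning $\gamma$ hangs off $\beta$ on one of the branches emanating at $\beta$. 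In the first case the common branch is uniquely determined; call it the branch \emph{through which $\gamma$ approaches $\beta$}. Transitivity in case (a) amounts to: if $\gamma,\delta$ both approach $\beta$ through branches, and each is $\sim$ to a common $\gamma'$, then they approach through the same branch — this follows by applying associativity of $\wedge$ to the triples involved and using that two distinct points on the segment $H(\beta,[\rho_\beta,1])$ determine it. Case (b) is handled similarly using that $\gamma\wedge\delta\neq\beta$ cuts out a ``sub-tree'' attached at $\beta$. The bookkeeping here, organizing the $\wedge$-relations among $\beta,\gamma,\delta$ and a mediating point, is the main obstacle, but it is purely formal once one thinks of $\calM(B^I_{L,E})$ as a tree-like space with $H$ the retraction onto geodesics.

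Second, openness of the classes. Fix $\gamma\not\sim\beta$-nothing, i.e.\ fix $\gamma$ in some class $C$; I would produce an open neighborhood of $\gamma$ inside $C$. The relation $\gamma\sim\delta$ is phrased entirely in terms of whether certain joins equal $\beta$; by the continuity of $\beta\wedge\bullet$ and of $\gamma\wedge\bullet$ (Definition~\ref{D:join}) together with the continuity of $H$ (Definition~\ref{D:homotopy}), the conditions ``$\beta\wedge\delta\neq\beta$'' and ``$\gamma\wedge\delta\neq\beta$'' are open conditions on $\delta$, while for the class containing $\gamma$ with $\beta\wedge\gamma=\beta$ one uses that $\{\delta:\beta\wedge\delta=\beta\}$ is exactly the complement of the open set $\{\delta:\beta\wedge\delta\neq\beta\}$ — but restricted to a small enough neighborhood of $\gamma$ one can pin down which branch $\delta$ lies on via the open condition $\gamma\wedge\delta\neq\beta$. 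In either case $C$ contains an open neighborhood of each of its points, so $C$ is open. Since the classes partition $\calM(B^I_{L,E})\setminus\{\beta\}$ into open sets, they are also closed in the subspace, hence are unions of connected components.

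Third, to finish I must show each class $C$ is connected, in fact path-connected; this upgrades ``union of components'' to ``single component.'' Given $\gamma,\delta\in C$, I would exhibit an explicit path. If $\gamma\wedge\delta\neq\beta$, concatenate the path $H(\gamma,\bullet)$ run from $\gamma$ up to $\gamma\wedge\delta$ with the reverse of $H(\delta,\bullet)$ run from $\delta$ up to $\gamma\wedge\delta$; by the defining property of $\wedge$ and \eqref{eq:composite homotopy} this path never hits $\beta$, since $\gamma\wedge\delta\neq\beta$ and moving toward $\lambda_r$ from $\gamma\wedge\delta$ stays on the branch away from $\beta$ (in case (a)) or at worst re-enters the sub-tree attached at $\beta$ without passing through $\beta$ itself. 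The case $\gamma\wedge\delta=\beta$ occurs only when one of $\gamma,\delta$ approaches $\beta$ through a branch; using the definition of $\sim$ one checks this forces $\gamma,\delta$ to lie in the \emph{same} branch-class, and then one routes the path through a point strictly between $\beta$ and $\lambda_r$ on that branch rather than through $\beta$ — concretely, via $H(\gamma,\rho_0)$ and $H(\delta,\rho_0)$ for a suitable $\rho_0$ and the segment joining them, which by Definition~\ref{D:join} lies on the shared geodesic. Path-connectedness of each class then gives that $C$ is connected, completing the identification with connected components. The only genuinely delicate point throughout is the transitivity verification in step one; everything else is continuity plus the geodesic/tree formalism already set up in Definitions~\ref{D:homotopy} and~\ref{D:join}.
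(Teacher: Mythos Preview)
Your approach is correct and rests on the same ingredients as the paper (continuity of the join maps and the tree structure encoded by $H$), but the paper packages the argument much more economically. Rather than treating the equivalence relation, openness, and path-connectedness as three separate verifications, the paper identifies each class outright as the preimage of an open half-interval under a single continuous map: the class described in (a) is $(\beta \wedge \bullet)^{-1}\bigl((\rho,1]\bigr)$, where $\rho$ is the radius of $\beta$, and a class described in (b) containing $\gamma$ is $(\gamma \wedge \bullet)^{-1}\bigl([\rho',\rho)\bigr)$, where $\rho'$ is the radius of $\gamma$ (the images of $\beta \wedge \bullet$ and $\gamma \wedge \bullet$ being identified with $[\rho,1]$ and $[\rho',1]$ via $H(\beta,\bullet)$ and $H(\gamma,\bullet)$). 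Openness is then immediate from continuity, and the transitivity you flag as ``the only genuinely delicate point'' dissolves: once the classes are exhibited as these concrete subsets, the fact that they partition $\calM(B^I_{L,E}) \setminus \{\beta\}$ is a direct check rather than a bookkeeping exercise with triples. Path-connectedness likewise follows from this description via the paths $H(\gamma,\bullet)$ and $H(\delta,\bullet)$ as you indicate. One small redundancy in your sketch: whenever $\gamma \sim \delta$ one always has $\gamma \wedge \delta \neq \beta$ (by definition in case (b), and by a short argument in case (a)), so the second sub-case of your path-connectedness step does not actually occur.
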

\begin{proof}
Let $\rho$ be the radius of $\beta$. Then the unique equivalence class described in (a)
is the inverse image of $(\rho,1] \subset [\rho, 1]$ under the map $\beta \wedge \bullet$, and therefore is open. Now choose an equivalence class described in (b), choose an element $\gamma$ thereof, and let $\rho'$ be the radius of $\gamma$; then the equivalence class
is the inverse image of $[\rho', \rho) \subset [\rho', 1]$ under the map $\gamma \wedge \bullet$, and therefore is open.
\end{proof}

\begin{remark} \label{R:geometric analogy}
The previous constructions are part of a broad analogy between the structure of $\calM(B^I_{L,E})$ and that of the closed unit disc $\calM(K\{T\})$.
The latter has the structure of an inverse limit of trees rooted at the Gauss point (the point corresponding to the Gauss norm on $K\{T\}$); see \cite[\S 1.4, Figure~1]{baker-rumely} for a picture. 
The analogue of Definition~\ref{D:homotopy} is the continuous map $H: \calM(K\{T\}) \times [0,1] \to \calM(K\{T\})$ constructed in \cite[Theorem~2.5]{kedlaya-witt},
which maps $(\beta,\rho)$ to the generic point of the closed disc of radius $\rho$ containing $\beta$; this gives a deformation retraction of $\calM(K\{T\})$ onto the Gauss point. The analogue of Definition~\ref{D:join} is the map $\beta \wedge \bullet$ taking
$\gamma \in \calM(K\{T\})$ to the generic point of the smallest closed disc containing both $\beta$ and $\gamma$.

This analogy can be extended somewhat further. For example, by \cite[Theorem~2.11]{kedlaya-witt}, for $\beta, \gamma \in \calM(K\{T\})$, $\beta(f) \geq \gamma(f)$ for all $f \in K\{T\}$ if and only if $\beta = H(\gamma, \rho)$ for some $\rho \in [0,1]$. The analogous statement for $\calM(B^I_{L,E})$ also holds; for the case $E = \QQ_p$, see \cite[Theorem~7.12]{kedlaya-witt}.
\end{remark}

\section{Structure of rational localizations}
\label{sec:pid}

We next convert our previous observations into some structural properties of the rings obtained from $B^I_{L,E}$ by the formation of rational localizations.

\begin{hypothesis}
Throughout \S\ref{sec:pid}, let $I = [s,r]$ be a closed interval in $(0, +\infty)$.
Let $(B^I_{L,E}, B^{I,+}_{L,E}) \to (C,C^+)$ be a rational localization, i.e., the homomorphism representing a rational subspace of $\Spa(B^I_{L,E}, B^{I,+}_{L,E})$
as in \cite[Lemma 2.4.13]{part1}.
\end{hypothesis}

\begin{convention} \label{conv:extension}
Throughout \S\ref{sec:pid}, we will write $L'$ for an unspecified perfect overfield of $L$ complete with respect to a multiplicative nonarchimedean norm extending the one on $L$.
For such $L'$,
let $(C', C^{\prime +})$ denote the base extension of $(C,C^+)$ along
$(B^I_{L,E}, B^{I,+}_{L,E}) \to (B^I_{L',E}, B^{I,+}_{L',E})$.
\end{convention}

\begin{lemma} \label{L:same localization}
Let $(R,R^+) \to (S,S^+)$ be a rational localization of adic Banach rings
such that $R$ and $S$ are both noetherian. (For instance,
by Theorem~\ref{T:strongly noetherian Robba2} we may take
$R = B^I_{L,E}, S = C$.)
For any $\gothm \in \Maxspec(R)$ 
such that $R/\gothm \cong \calH(\beta)$ for some $\beta \in \calM(S)$
and any positive integer $n$,
the map $R/\gothm^n \to S/\gothm^n S$ is an isomorphism.
\end{lemma}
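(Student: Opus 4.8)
The assertion is local in nature: we are given a rational localization $(R, R^+) \to (S, S^+)$ with $R, S$ noetherian, a maximal ideal $\gothm$ of $R$ with residue field $\calH(\beta)$ for some $\beta \in \calM(S)$, and we want $R/\gothm^n \xrightarrow{\sim} S/\gothm^n S$. The plan is to reduce to $n = 1$ by a standard dévissage, then to handle $n = 1$ using the universal property of the rational localization together with the completeness of $\calH(\beta)$.

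\emph{Step 1: reduction to $n = 1$.} I would argue by induction on $n$, using the short exact sequences
\begin{equation*}
0 \to \gothm^{n}/\gothm^{n+1} \to R/\gothm^{n+1} \to R/\gothm^{n} \to 0
\end{equation*}
and the analogous sequence for $S$ obtained by applying $- \otimes_R S$ (which is exact on the left here because $S$ is a flat $R$-algebra, rational localizations being flat). Comparing the two sequences via the five lemma, it suffices to show $R/\gothm \to S/\gothm S$ is an isomorphism and that $\gothm^n/\gothm^{n+1} \otimes_R S \to \gothm^n S/\gothm^{n+1} S$ is an isomorphism; but $\gothm^n/\gothm^{n+1}$ is a finite-dimensional $R/\gothm$-vector space, so the latter reduces again to the $n = 1$ case after tensoring. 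Thus everything comes down to proving $R/\gothm \to S/\gothm S$ is an isomorphism.

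\emph{Step 2: the case $n = 1$.} Here I would use the hypothesis $R/\gothm \cong \calH(\beta)$ for $\beta \in \calM(S)$ directly. The seminorm $\beta$ on $S$ has kernel a prime ideal $\gothq$, and the composite $R \to S \to S/\gothq \hookrightarrow \calH(\beta)$ is, up to the given identification, the quotient map $R \to R/\gothm$; in particular $\gothm = R \cap \gothq$ and the map $R/\gothm \to S/\gothq$ is injective. The point is to upgrade this: because $\beta$ is bounded and multiplicative, it extends by the universal property of the rational localization $(R,R^+)\to(S,S^+)$ — concretely, the defining inequalities for the rational subspace are satisfied at $\beta$ since $\beta$ already comes from a point of $\calM(S)$ — so $\beta$ factors through $S$ in a way compatible with the $R$-algebra structure, and the induced map $S \to \calH(\beta)$ restricts to the given isomorphism $R/\gothm \cong \calH(\beta)$. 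Surjectivity of $R/\gothm \to S/\gothm S$ then follows from a Nakayama-type argument: $S/\gothm S$ is a finite $R/\gothm$-module (by noetherianity of $S$ together with the fact that $\Spec(S/\gothm S) \to \Spec(R/\gothm) = \{\text{pt}\}$ has a section through $\calH(\beta)$ and the rational-localization map is an open immersion on adic spectra, so $\gothm S$ is $\gothm$-primary or the unit ideal locally), and the section forces $S/\gothm S \cong \calH(\beta) \cong R/\gothm$.

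\emph{Main obstacle.} The delicate point is Step 2: showing that $\gothm S$ is actually the \emph{full} preimage behavior one wants, i.e., that $S/\gothm S$ is not merely a quotient mapping onto $\calH(\beta)$ but equals it — equivalently that $\gothm S$ is a maximal ideal of $S$ with the same residue field. This is where one must genuinely use that $(R,R^+)\to(S,S^+)$ is a rational localization and not just any flat noetherian extension: on adic spectra the map is an open immersion, so the fibre over the point of $\Spec R$ cut out by $\gothm$ (which, via $\calH(\beta)$, lies in the image of $\Spa(S,S^+)$) consists of a single point with residue field $\calH(\beta)$, and since $S$ is noetherian this forces $S/\gothm S$ to be Artinian local with residue field $\calH(\beta)$; the completeness of $\calH(\beta)$ and the fact that $R/\gothm \to \calH(\beta)$ is already an isomorphism then kill the nilpotents. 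I would expect to spend most of the write-up making this open-immersion-to-fibre argument precise, citing the adic-space formalism from \cite{part1} and \cite{huber}, rather than on the dévissage, which is routine.
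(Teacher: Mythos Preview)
Your d\'evissage in Step~1 and the obstacle you flag in Step~2 both stem from missing the key observation: the universal property of the rational localization applies not only to $R \to R/\gothm = \calH(\beta)$ but directly to $R \to R/\gothm^n$ for every $n$. Since $R/\gothm^n$ is Artinian local with residue field $\calH(\beta)$, every bounded multiplicative seminorm on it restricts to $\beta$ on $R$, which lies in $\calM(S)$; hence $R \to R/\gothm^n$ factors (uniquely) through a continuous $R$-algebra map $\phi\colon S \to R/\gothm^n$. As $\phi$ kills $\gothm^n S$, it induces $\psi\colon S/\gothm^n S \to R/\gothm^n$, and the composite $R/\gothm^n \to S/\gothm^n S \xrightarrow{\psi} R/\gothm^n$ is the identity, giving injectivity of $R/\gothm^n \to S/\gothm^n S$. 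For surjectivity, the quotient map $S \to S/\gothm^n S$ and the composite $S \xrightarrow{\phi} R/\gothm^n \to S/\gothm^n S$ are two continuous $R$-algebra maps from $S$ to the same target, hence equal by the uniqueness clause of the universal property; since the first is surjective, so is $R/\gothm^n \to S/\gothm^n S$. This is the paper's argument (modeled on \cite[Proposition~7.2.2/1]{bgr}): it handles all $n$ at once, with no induction and no flatness hypothesis.

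Your Step~2 as written has a real gap. The assertion that $S/\gothm S$ is a finite $R/\gothm$-module, or Artinian, is not justified: noetherianity of $S$ alone does not give it, and the ``open immersion on adic spectra'' heuristic does not translate into an algebraic finiteness statement for the fibre without further work essentially amounting to the lemma itself. The closing remark that ``completeness kills the nilpotents'' is also unsubstantiated---even if $S/\gothm S$ were Artinian local with residue field $\calH(\beta)$, nothing you have said rules out a nontrivial nilradical. Finally, Step~1 invokes flatness of $S$ over $R$, which the lemma does not assume; in the paper, flatness of rational localizations in this setting is in fact deduced downstream of this lemma rather than used as an input.
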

\begin{proof}
We follow \cite[Proposition~7.2.2/1]{bgr}.
In the commutative diagram
\[
\xymatrix{
R \ar[r] \ar[d] & S \ar[d] \ar@{-->}[ld] \\
R/\gothm^n \ar[r] & S/\gothm^n S
}
\]
the dashed arrow exists and is unique for $n=1$ by hypothesis,
and hence for all $n$ by the universal property of rational localizations.
Since the vertical arrows are surjective, so are $S \to R/\gothm^n$
and $R/\gothm^n \to S/\gothm^n S$. On the other hand, the kernel of $S \to R/\gothm^n$ contains $\gothm^n$ and hence also $\gothm^n S$, so $R/\gothm^n \to S /\gothm^n S$ is also injective.
\end{proof}

\begin{lemma} \label{L:uniform}
The Banach ring $C$ is uniform. In particular, by \cite[Theorem~2.3.10]{part1}, 
the supremum over $\calM(C)$ computes the spectral norm on $C$.
\end{lemma}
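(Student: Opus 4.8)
The plan is to show that the spectral seminorm $\lambda_C$ on $C$ coincides with its own Banach norm $|\bullet|_C$, i.e., that $|f|_C = \sup_{\beta \in \calM(C)} \beta(f)$ for all $f \in C$. Since $C$ is a rational localization of $B^I_{L,E}$, it is presented as a quotient of a Tate algebra $B^I_{L,E}\{T_1/\rho_1,\dots,T_m/\rho_m\}$ by the ideal generated by the standard rational-localization relations (see \cite[Lemma 2.4.13]{part1}); by Corollary~\ref{C:Banach to Frechet} the norm $\lambda_I$ on $B^I_{L,E}$ is itself the supremum of the multiplicative norms $\lambda_t$, so $B^I_{L,E}$ is uniform, and by Gauss's lemma the Gauss norm $|\bullet|_\rho$ on the Tate algebra is again a supremum of multiplicative norms, hence that ring is uniform as well. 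The issue is thus entirely about the passage to the quotient: one must rule out that the residue norm on $C$ strictly exceeds the spectral seminorm.

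First I would reduce, using the homotopy and join machinery of \S\ref{sec:primitive}, to a statement about the fibers over points of $\calM(B^I_{L,E})$. Concretely, for each $\beta \in \calM(B^I_{L,E})$ the map $\beta \wedge \bullet$ exhibits $\calM(B^I_{L,E})$ as fibered over the interval $[\rho,1]$ with $\rho$ the radius of $\beta$, and Lemma~\ref{L:complementary components} describes the connected components of $\calM(B^I_{L,E}) \setminus \{\beta\}$; combined with Lemma~\ref{L:find primitive point}, which realizes every $\beta$ via a primitive element $\varpi - [\overline{u}]$ over some complete perfect overfield $L'$, this lets me compute residue norms after passing to the extension $L'$ where $B^I_{L',E}$ (and hence $C'$) becomes an algebra over the perfectoid field $W(\gotho_{L'})_E[\varpi^{-1}]/(\varpi - [\overline{u}])$. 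Over such a field the relevant Tate algebras are classical affinoid algebras, which are uniform by \cite[Theorem 5.2.6/1]{bgr} together with the reducedness of the spectral seminorm, or directly by the maximum modulus principle for affinoid algebras. The descent from $L'$ back to $L$ uses Lemma~\ref{L:tensor product} (the tensor product seminorms are power-multiplicative and the relevant exact sequences are almost optimal) exactly as in the primitive descent argument of \S\ref{sec:descent}.

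An alternative, and probably cleaner, route is to invoke \cite[Theorem 2.3.10]{part1} in the form already cited in the statement: a Banach ring is uniform if and only if its norm is power-multiplicative (equivalently, equals its own spectral seminorm), and being a rational localization of a uniform ring preserves this property because the rational-localization norm is built from the norms $\lambda_t$ by the universal-property construction of \cite[\S 2.4]{part1}, all of which respect power-multiplicativity; one checks on generators $\frac{g_i}{f}$ that the residue norm is bounded by the supremum norm, and the reverse bound is automatic. I would organize the proof this way: (i) note $B^I_{L,E}$ is uniform by Corollary~\ref{C:Banach to Frechet}; (ii) note the Tate algebra over it is uniform by Gauss's lemma; (iii) invoke the stability of uniformity under rational localization established in \cite[\S 2.4, \S 2.6]{part1}, or redo that argument here, to conclude $C$ is uniform; (iv) apply \cite[Theorem 2.3.10]{part1} to identify the spectral norm with the supremum over $\calM(C)$.

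The main obstacle is step (iii): rational localizations are quotients as well as completions, and quotient norms need not be power-multiplicative even when the source is, so one genuinely needs either the geometric fiberwise analysis of \S\ref{sec:primitive}–\S\ref{sec:descent} sketched above, or a cited black box from \cite{part1} that already packages "rational localization of a uniform ring is uniform." I expect the cleanest exposition cites such a result; failing that, the descent-to-a-perfectoid-field argument via Lemma~\ref{L:find primitive point} and Lemma~\ref{L:tensor product} is the substantive work, and verifying that the join decomposition of $\calM(B^I_{L,E}) \setminus \{\beta\}$ interacts correctly with the rational-subspace conditions defining $C$ is the delicate point.
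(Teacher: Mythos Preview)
You have correctly identified the crux: uniformity of $B^I_{L,E}$ itself is easy (Corollary~\ref{C:Banach to Frechet} exhibits $\lambda_I$ as a supremum of multiplicative norms), but passage to a rational localization is not automatic, since quotient norms of power-multiplicative norms need not be power-multiplicative. However, both of your proposed resolutions have gaps.

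Your route (iii) hopes for a general theorem of the form ``rational localization of a uniform ring is uniform'' somewhere in \cite[\S 2.4, \S 2.6]{part1}. No such theorem exists, because the statement is false in general: there are uniform Banach rings admitting non-uniform rational localizations. What \cite{part1} provides instead is the notion of a \emph{stably uniform} ring (one all of whose rational localizations are uniform) together with sufficient criteria for it; one must verify such a criterion for $B^I_{L,E}$ specifically.

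Your geometric route contains a more concrete error. After enlarging $L$ to $L'$ so that $\beta$ arises from $\varpi - [\overline{u}]$, the ring $B^I_{L',E}$ does \emph{not} become an algebra over the perfectoid field $W(\gotho_{L'})_E[\varpi^{-1}]/(\varpi - [\overline{u}])$; rather, that field is a \emph{quotient} of $B^I_{L',E}$, the map going the other way. Hence $C'$ is not a classical affinoid algebra over a field, and the appeal to \cite[Theorem~5.2.6/1]{bgr} does not apply. The homotopy and join machinery of \S\ref{sec:primitive} is not needed here at all.

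The paper's proof is a short citation exploiting perfectoid structure, which is the direction your instincts were pointing but with the correct mechanism. When $E$ has characteristic $0$, $B^I_{L,E}$ is preperfectoid by \cite[Theorem~5.3.9]{part1}; when $E$ has characteristic $p$, its completed base change to a completed perfect closure $E'$ of $E$ is perfect. In either case the proof of \cite[Theorem~3.7.4]{part1} then shows that $B^I_{L,E}$ is stably uniform, which is precisely the assertion that every rational localization $C$ is uniform.
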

\begin{proof}
If $E$ is of characteristic $0$, then
the ring $B^I_{L,E}$ is preperfectoid \cite[Theorem~5.3.9]{part1};
otherwise,  for $E'$ a completed perfect closure of $E$, $B^I_{L,E} \widehat{\otimes}_E E'$ is perfect. In either case, the proof of \cite[Theorem~3.7.4]{part1} implies that $B^I_{L,E}$ is stably uniform.
\end{proof}

\begin{lemma} \label{L:find primitive factor}
Let $x \in C$ be an element which is not a unit. Then for some $L'$, we can find $\overline{u} \in \gothm_{L'} \setminus \{0\}$ such that $\varpi - [\overline{u}]$ divides $x$ in $C'$.
\end{lemma}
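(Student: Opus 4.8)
To find a primitive element of degree 1 dividing a non-unit $x \in C$, I would produce it pointwise on the Gel'fand spectrum and then glue using the descent result of Section~\ref{sec:descent}. First, since $x$ is not a unit in $C$, by Lemma~\ref{L:uniform} and the Nullstellensatz-type analysis of $\calM(C)$ there exists $\beta \in \calM(C)$ with $\beta(x) = 0$; equivalently, $x$ lies in the maximal ideal $\gothm = \ker(\beta)$ with $C/\gothm \cong \calH(\beta)$. Restricting $\beta$ to $B^I_{L,E}$ and applying Lemma~\ref{L:find primitive point}, I obtain a perfect overfield $L'$ of $L$ and some $\overline{u} \in \gothm_{L'} \setminus \{0\}$ such that $H(\varpi - [\overline{u}], 0)$ restricts to $\beta$ on $B^I_{L,E}$. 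The element $z = \varpi - [\overline{u}] \in W(\gotho_{L'})_E$ is primitive of degree 1, with some slope $r' \in I$.

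Next I would check that $z$ divides $x$ in $C'$, at least after the harmless enlargement of $L$ to $L'$. The key point is that $z$ generates the kernel of the map $B^I_{L',E} \to \calH(\beta)$ determined by $H(z,0)$: indeed, by the definition of $H(z,0)$ as the quotient norm on $B^I_{L',E}/(z)$ (which is multiplicative, hence has trivial kernel beyond $(z)$), the ideal $(z)$ is exactly the set of elements killed by this seminorm. Since $x$ maps to $0$ under $\beta$, hence under its extension $\beta'$ to $C'$, and since the map $B^I_{L',E} \to C' \to \calH(\beta')$ factors through $B^I_{L',E}/(z)$, I need $x$ to be divisible by $z$ in $C'$ itself, not merely in $C'/(\text{null elements})$. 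For this I would use that $(z)$ is a closed ideal (multiplication by $z$ is strict, as in Lemma~\ref{L:Robba localizations}) and that $C'$ is flat, or directly that $B^I_{L',E}\{T/(p^{-1}\rho)\}/(T-z)$-type quotients behave well — more precisely, I would argue that $z$ divides $x$ in $C'$ by observing that $x/z$ is bounded in $C'[\,z^{-1}\,]$ near every point of $\calM(C')$ (using multiplicativity of $H(z,\rho)$ for $\rho$ small and the three-circles inequality of Lemma~\ref{L:Hadamard}) and invoking uniformity of $C'$ (Lemma~\ref{L:uniform}) to conclude $x/z \in C'$.

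Alternatively — and this is probably the cleaner route — I would appeal to the analogue of the "reduced fibre" structure: the quotient $C'/(z)$ is again a rational localization of $B^{I',E}_{L'}$ for the appropriate subinterval (via Lemma~\ref{L:Robba localizations}), and being a Dedekind-type or at least reduced ring, the vanishing of $x$ at the point $\beta'$ cutting out a height-one prime forces $z \mid x$ locally; then strong noetherianity (Theorem~\ref{T:strongly noetherian Robba2}) upgrades "locally at $\beta'$" to a global factorization. To finish, since the statement only asks for divisibility in $C'$ for \emph{some} $L'$, no descent back to $C$ is needed, and the proof terminates here.

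\textbf{Main obstacle.} The delicate step is passing from "$x$ vanishes at the point $H(z,0)$" to the honest ring-theoretic statement "$z$ divides $x$ in $C'$." Vanishing at a seminorm point only says $x$ lies in the \emph{kernel} of that seminorm, and a priori this kernel could be strictly larger than the principal ideal $(z)$ — one must use that $H(z,0)$ is genuinely multiplicative (so its kernel is prime and, combined with the degree-1 primitivity and the perfectoid/field structure of $W(\gotho_{L'})_E[\varpi^{-1}]/(z)$, equals exactly $(z)$ after localizing) together with the uniformity of $C'$ to rule out the "$x/z$ has a pole" scenario. Getting this boundedness argument right, rather than the combinatorics, is where the real work lies.
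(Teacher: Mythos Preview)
Your setup matches the paper's: find $\beta \in \calM(C)$ with $\beta(x)=0$, restrict to $\calM(B^I_{L,E})$, invoke Lemma~\ref{L:find primitive point} to produce $L'$ and $\overline{u}$, and set $z=\varpi-[\overline{u}]$. Two minor points first: the assertion that $\ker(\beta)$ is maximal with $C/\ker(\beta)\cong\calH(\beta)$ is the content of Corollary~\ref{C:maximal ideal one point}, which is proved \emph{after} this lemma and in fact relies on it, so you should not invoke it here (fortunately you do not need it). Also, the existence of $\beta$ with $\beta(x)=0$ is justified in the paper not via uniformity but via noetherianity: $(x)$ is closed (Theorem~\ref{T:strongly noetherian Robba2}), so $C/(x)$ is a nonzero Banach ring and $\calM(C/(x))\neq\emptyset$.

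The real gap is in the step you yourself flag as the main obstacle. Neither of your two routes works as written. Your ``alternative'' route misreads Lemma~\ref{L:Robba localizations}: that lemma computes quotients of the form $B^I_{L,E}\{T/\rho\}/(T-[\overline{z}])$ with an \emph{extra} variable $T$, and identifies them with $B^{I'}_{L,E}$; it says nothing about $C'/(z)$ for $z=\varpi-[\overline{u}]$, which is a quotient by an element of the base ring, not by $T-[\overline{z}]$. Your boundedness route is too vague to be a proof: at $\beta'$ both $x$ and $z$ vanish, so ``$x/z$ bounded near $\beta'$'' is exactly the question at hand, and Hadamard-type convexity alone does not settle it.

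The paper's resolution is much cleaner and uses a lemma you did not mention. By Definition~\ref{D:primitive of degree 1}, $B^I_{L',E}/(z)$ is a field, so $\gothm=(z)$ is a maximal ideal of $B^I_{L',E}$ with $B^I_{L',E}/\gothm\cong\calH(\beta')$ for $\beta'=H(z,0)$. Since $\beta'$ restricts to $\beta\in\calM(C)$ (and $\calM(C)\hookrightarrow\calM(B^I_{L,E})$ for a rational localization), $\beta'\in\calM(C')$. Now Lemma~\ref{L:same localization} applied to the rational localization $B^I_{L',E}\to C'$ gives an isomorphism
\[
B^I_{L',E}/(z)\;\xrightarrow{\ \sim\ }\;C'/(z)C'.
\]
Thus $C'/(z)C'$ is itself a field, namely $\calH(\beta')$, so the kernel of $C'\to\calH(\beta')$ is exactly $(z)C'$. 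Since $\beta'|_C=\beta$ (injectivity of $\calM(C)\to\calM(B^I_{L,E})$) and $\beta(x)=0$, we get $x\in(z)C'$, i.e.\ $z\mid x$ in $C'$. This replaces both of your proposed arguments with a two-line computation.
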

\begin{proof}
By Theorem~\ref{T:strongly noetherian Robba2}, the ring $C$ is noetherian, so all of its ideals are closed \cite[Remark~2.2.11]{part1}. Consequently, $C/(x)$ is a nonzero Banach ring, so $\calM(C/(x))$ is nonempty \cite[Theorem~1.2.1]{berkovich1}. 
Choose a point $\gamma \in \calM(C/(x))$ and restrict it to $\beta \in \calM(B^I_{L,E})$,
then take $L', \overline{u}$ as in Lemma~\ref{L:find primitive point}
and put $\beta' = \calH(\varpi - [\overline{u}], 0)$.
By Definition~\ref{D:primitive of degree 1} and Lemma~\ref{L:same localization},
\[
\calH(\beta') \cong B^I_{L',E}/(\varpi - [\overline{u}])B^I_{L',E} \cong C'/(\varpi - [\overline{u}])C',
\]
so $\varpi - [\overline{u}]$ divides $x$ in $C'$.
\end{proof}

\begin{cor} \label{C:primitive factors}
For every $x \in B_{L,E}$ nonzero, we can choose $L'$ such that $x$ factors in $B_{L',E}$
as a unit times a finite product of primitive elements of degree $1$.
(With a more complicated argument, one can force $L'$ to be a completed algebraic closure of $L$; see  \cite[Lemma~6.6(b)]{kedlaya-witt}.)
\end{cor}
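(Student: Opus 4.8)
The plan is to run an induction on $\deg(x)$, using Lemma~\ref{L:find primitive factor} to peel off one primitive factor of degree $1$ at a time. First I would reduce to working inside $B^I_{L,E}$ for a suitable closed interval $I = [s,r]$: since $x \in B_{L,E}$ is nonzero, its Newton polygon has only finitely many slopes, all lying in some compact interval $I \subset (0,+\infty)$, so it suffices to factor the image of $x$ in $B^I_{L,E}$ (the factorization then lifts back, since $B_{L,E} \to B^I_{L,E}$ is injective and the primitive elements $\varpi - [\overline{u}]$ and units we produce already live in $W(\gotho_{L'})_E \subset B_{L',E}$). If $\deg(x) = 0$, then by Lemma~\ref{L:unit} the element $x$ is already a unit and we are done. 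Otherwise $\deg(x) > 0$, so $x$ is not a unit, and applying Lemma~\ref{L:find primitive factor} with $C = B^I_{L,E}$ (legitimate by Theorem~\ref{T:strongly noetherian Robba2}, which gives that $B^I_{L,E}$ is noetherian) yields an extension $L'$ and some $\overline{u} \in \gothm_{L'} \setminus \{0\}$ such that $\varpi - [\overline{u}]$ divides $x$ in $B^I_{L',E}$; write $x = (\varpi - [\overline{u}]) x'$.

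Next I would verify that $\deg(x') = \deg(x) - 1$, so that the induction actually descends. This follows from the multiplicativity of Newton polygons: by Lemma~\ref{L:additivity of slopes} (extended to $B^I_{L,E}$ via Definition~\ref{D:Newton polygon extended}), the degree is additive under multiplication, and $\varpi - [\overline{u}]$, being primitive of degree $1$ with its unique slope in $I$ (one must check $\overline{u}$ can be taken with slope in $I$ — see below), has degree $1$. Hence $\deg(x') = \deg(x) - 1 < \deg(x)$. Now one subtlety: Lemma~\ref{L:find primitive point} and hence Lemma~\ref{L:find primitive factor} produce a coherent system of $p$-power roots of $p$, whose associated primitive element $\varpi - [\overline{u}]$ has slope equal to the slope of that point $\beta \in \calM(B^I_{L,E})$; since $\beta$ was obtained from a point of $\calM(B^I_{L,E}/(x))$, its slope automatically lies in $I$, and more specifically the slope realized is one of the slopes appearing in the Newton polygon of $x$. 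So after finitely many steps we arrive at an element of degree $0$, i.e.\ a unit by Lemma~\ref{L:unit}, and collecting the extracted factors gives $x = u \prod_{k=1}^{\deg(x)}(\varpi - [\overline{u}_k])$ in $B^{L'',E}$ for a suitable finite tower $L''$ of extensions of $L$ (each application of Lemma~\ref{L:find primitive factor} enlarges the field, and a finite chain of such enlargements is again a perfect overfield complete for an extension of the norm).

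The main obstacle I expect is bookkeeping the field extensions and confirming that the final factorization, built up in $B^I_{L'',E}$, descends to the statement as phrased over $B_{L'',E}$. The descent direction is easy (injectivity of $B_{L'',E} \to B^I_{L'',E}$ together with the observation that $W(\gotho_{L''})_E \cap B^I_{L'',E}$-arithmetic is compatible), but one should be slightly careful that the "unit" $u$ appearing at the end, a priori a unit in $B^I_{L'',E}$ of degree $0$, is actually a unit in $B_{L'',E}$: since $x$ and all the $\varpi - [\overline{u}_k]$ lie in $W(\gotho_{L''})_E[\varpi^{-1}] = B_{L'',E}$ and the latter is a domain in which $B_{L'',E} \to B^I_{L'',E}$ is injective, $u = x \prod_k (\varpi - [\overline{u}_k])^{-1}$ lies in $\Frac(B_{L'',E})$, and one checks it actually lies in $B_{L'',E}$ and is a unit there by comparing Newton polygons (it has empty Newton polygon in every $B^{[s',r']}_{L'',E}$, hence is a unit by the analogue of Lemma~\ref{L:unit}; alternatively one invokes the parenthetical remark and cites \cite[Lemma~6.6(b)]{kedlaya-witt}). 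I would keep this last verification brief, as it is routine once the inductive extraction is in place.
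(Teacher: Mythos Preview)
Your approach is essentially the paper's: iterate Lemma~\ref{L:find primitive factor} and use additivity of degree (multiplicativity of Newton polygons) to guarantee termination after $\deg(x)$ steps. The paper compresses all of this into the single phrase ``consideration of slopes,'' but the content is the same.

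One comment on the bookkeeping you flag at the end. You are right that the induction naturally produces a unit $u$ in $B^I_{L'',E}$, and that one must still check $u \in B_{L'',E}^\times$. Your sketched resolution is on the right track but can be made cleaner: since $x \in B_{L'',E} \subset W(L'')_E[\varpi^{-1}]$ and each $\varpi - [\overline{u}_k]$ is a unit in $W(L'')_E$ (its reduction mod $\varpi$ is $-\overline{u}_k \neq 0$), the quotient $u$ already lies in $W(L'')_E[\varpi^{-1}]$. Having chosen $I$ to contain all slopes of $x$, the Newton polygon of $u$ over any interval is empty, so there is a single index $m$ with $p^{-n}|\overline{u}'_n|^t \le p^{-m}|\overline{u}'_m|^t$ for all $n$ and all $t>0$; letting $t\to\infty$ bounds $|\overline{u}'_n|$ for $n>m$, and there are only finitely many $n<m$, so the Teichm\"uller coordinates of $u$ are bounded and $u \in B_{L'',E}$. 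This is routine, as you say, and the paper does not spell it out.
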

\begin{proof}
This follows by Lemma~\ref{L:find primitive factor} and consideration of slopes.
\end{proof}

\begin{lemma} \label{L:convex}
For any $\beta \in \calM(C)$,
there exist finitely many values $0 < \rho_1 < \cdots < \rho_m < 1$
with the following properties.
\begin{enumerate}
\item[(a)]
For $J$ equal to any of $(0, \rho_1], [\rho_1, \rho_2], \dots, [\rho_{m-1}, \rho_m], [\rho_m, 1]$,
either $H(\beta, \rho) \in \calM(C)$ for all $\rho \in J$, or $H(\beta, \rho) \notin \calM(C)$ for all $\rho$ in the interior of $J$.
\item[(b)]
For $J$ as in (a) for which the first alternative holds, for any $x \in C$ the function
\[
t \mapsto \log H(\beta, e^{-t})(x)
\]
is convex and continuous on $-\log J$. In particular, $H(\beta,e^{-t})(x)$ is either zero for all $t \in -\log J$ or nonzero for all $t \in -\log J$.
\end{enumerate}
\end{lemma}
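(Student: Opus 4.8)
The plan is to reduce Lemma~\ref{L:convex} to a statement about the structure of the "homotopy path" $\{H(\beta,\rho):\rho\in[0,1]\}$ inside $\calM(B^I_{L,E})$, which by Definition~\ref{D:homotopy} is a homeomorphic image of an interval $[\rho_\beta,1]$ (where $\rho_\beta$ is the radius of $\beta$), and then to understand how this path meets $\calM(C)$ and how spectral seminorms behave along it. The key point is that rational localizations are represented by finitely many inequalities: there exist $f_0,\dots,f_k \in B^I_{L,E}$ generating the unit ideal such that $\calM(C)$ is the subset of $\calM(B^I_{L,E})$ where $|f_i| \leq |f_0|$ for $i=1,\dots,k$. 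So the question of whether $H(\beta,\rho) \in \calM(C)$ is governed by the sign of the finitely many functions $\rho \mapsto \log H(\beta,\rho)(f_i) - \log H(\beta,\rho)(f_0)$.

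For part (b) I would first treat the case $x \in B_{L,E}$ (or even $x = \varpi^n[\overline{x}_n]$ a monomial), where $\log H(\beta,e^{-t})(x)$ can be computed explicitly from the Newton polygon description: along the homotopy path the relevant seminorm is $H(\varpi-[\overline{u}],\rho)$ restricted appropriately, and by the formula for $H(z,\rho)$ combined with \eqref{eq:Gauss norm formula} this is a maximum of finitely many affine functions of $t$, hence convex and piecewise-linear, hence continuous, and in particular it is either identically zero or nowhere zero on any subinterval on which it is defined. By density of $B_{L,E}$ in $B^I_{L,E}$ and the strict-ness of the quotient norms (Lemma~\ref{L:uniform} gives uniformity of $C$, so $H(\beta,\rho)(x)$ can be computed as a limit of $H(\beta,\rho)$ applied to approximants), I can pass to general $x \in B^I_{L,E}$: a uniform limit of convex continuous functions is convex and continuous. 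To handle general $x \in C$ rather than $x$ in the image of $B^I_{L,E}$, I would use that $C$ is itself, locally along the path, identified with a rational localization of a ring of the form $B^{I'}_{L,E}$ or $B^{I'}_{L',E}$ via Lemma~\ref{L:Robba localizations} and the descent of Convention~\ref{conv:extension}; more directly, each $H(\beta,\rho)\in\calM(C)$ extends to a multiplicative seminorm and the spectral computation of Lemma~\ref{L:uniform} lets me write $H(\beta,\rho)(x)$ as a sup of $|x|$ over a fibre, which again varies convexly. The "either zero for all $t$ or nonzero for all $t$" dichotomy then follows because a convex function on an interval which is $-\infty$ at an interior point is $-\infty$ throughout (here using $\log 0 = -\infty$).

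For part (a), the finiteness of the breakpoints $\rho_1<\cdots<\rho_m$ comes from applying part (b)'s convexity/piecewise-linearity to the defining functions $f_0,\dots,f_k$ of the rational localization: each $\rho \mapsto \log H(\beta,\rho)(f_i/f_0)$ is convex and piecewise-linear on each subinterval where $f_0$ does not vanish, so the set where all $k$ inequalities hold is a finite union of closed subintervals of $[\rho_\beta,1]$; its endpoints, together with $\rho_\beta$ and the (finitely many) points where some $f_i$ changes its Newton-polygon behaviour, give the desired partition. On the interior of each resulting piece $J$, either the membership inequalities hold strictly (so $H(\beta,\rho)\in\calM(C)$ throughout, by continuity of $H(\beta,\bullet)$ and closedness of $\calM(C)$ in $\calM(B^I_{L,E})$), or one of them fails (so $H(\beta,\rho)\notin\calM(C)$ throughout the interior).

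The main obstacle I anticipate is part (b) for a \emph{general} element $x \in C$ as opposed to one pulled back from $B^I_{L,E}$: one must make sense of $H(\beta,\rho)(x)$ for $\rho$ at which $H(\beta,\rho)\notin\calM(C)$ (where "$H(\beta,\rho)(x)$" should presumably be read as $0$ by convention, matching the $-\infty$ endpoint of a convex function) and, on the subintervals where $H(\beta,\rho)\in\calM(C)$, one must know that the value varies \emph{convexly} and not merely continuously. The cleanest route is to localize: along a subinterval of the path contained in $\calM(C)$, identify $C$ (after possibly enlarging $L$ to some $L'$ and using Lemma~\ref{L:same localization} to control completed stalks) with a rational localization of a single $B^{I'}_{L',E}$ via Lemma~\ref{L:Robba localizations}, so that $x$ becomes an honest quotient of elements of $B^{I'}_{L',E}$ and the convexity reduces, through Lemma~\ref{L:Hadamard} (the Hadamard three-circles inequality) applied at each point of the path, to the already-understood case. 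Getting the bookkeeping of these identifications uniform along the path — so that the finitely many breakpoints suffice — is the delicate part, but it is forced by the noetherian finiteness already in hand from Theorem~\ref{T:strongly noetherian Robba2}.
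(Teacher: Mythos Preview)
Your overall strategy---analyze the defining inequalities of the rational localization along the homotopy path, then pass to general $x\in C$ by density---is the same as the paper's. But you are missing the one computational ingredient that makes the argument go through, and your workaround for general $x\in C$ is more elaborate than necessary.

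The paper's proof begins by enlarging $L$ (via Lemma~\ref{L:find primitive point}) so that $\beta=H(\varpi-[\overline{u}],0)$ for some $\overline{u}\in\gothm_L$, and takes the parameters $f_1,\dots,f_n,g$ defining the rational subspace to lie in $B_{L,E}$ (not merely $B^I_{L,E}$). It then applies Corollary~\ref{C:primitive factors} to factor each of $f_1,\dots,f_n,g$ (after further enlarging $L$) as a unit times a product of primitive elements $\varpi-[\overline{u}_i]$, and uses the explicit formula
\[
H(\beta,\rho)(\varpi-[\overline{u}_i])=\max\{p^{-1}\rho,\ \lambda_r([\overline{u}]-[\overline{u}_i])\}.
\]
This immediately gives that $t\mapsto\log H(\beta,e^{-t})(f_j)$ and $t\mapsto\log H(\beta,e^{-t})(g)$ are piecewise-linear with finitely many breakpoints, from which (a) follows. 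Your sketch asserts piecewise-linearity of $\rho\mapsto\log H(\beta,\rho)(f_i/f_0)$ without this factorization, and your justification (``compute on monomials, take a max'') does not yield \emph{finitely many} affine pieces for an arbitrary element of $B^I_{L,E}$; moreover the function $\log H(\beta,\rho)(f_i/f_0)$ is a \emph{difference} of convex functions, so your appeal to convexity there is not correct. Piecewise-linearity, not convexity, is what gives the finitely many breakpoints in (a), and the route to it is the primitive factorization.

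For (b), once the parameters and $\beta$ are in this split form, the paper simply observes that $B_{L,E}[g^{-1}]$ is dense in $C$, and on that subring every element is a ratio of products of primitive factors times a unit, so the explicit formula again gives convexity (indeed piecewise-linearity); passing to the limit yields convexity for all $x\in C$, and continuity follows. Your proposal to identify $C$ locally along the path with a rational localization of some $B^{I'}_{L',E}$ via Lemma~\ref{L:Robba localizations} is not needed and is not obviously available: that lemma only produces localizations that change the interval $I$, not arbitrary rational localizations.
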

\begin{proof}
There is no harm in enlarging $L$, so we may apply Lemma~\ref{L:find primitive factor} to reduce to the case where $\beta = H(\varpi - [\overline{u}], 0)$
for some $\overline{u} \in \gothm_L \setminus \{0\}$.
As in \cite[Remark~2.4.7]{part1},
we can find $f_1,\dots,f_n \in B_{L,E}$ generating the unit ideal in $B^I_{L,E}$
for which
\[
\Spa(C,C^+) = \{v \in \Spa(B^I_{L,E}, B^{I,+}_{L,E}): v(f_i) \leq v(g) \qquad (i=1,\dots,n)\}.
\]
Apply Corollary~\ref{C:primitive factors} to each of $f_1,\dots,f_n,g$
and let $\varpi - [\overline{u}_1], \dots, \varpi - [\overline{u}_l]$ be the list of all factors obtained.
For each $i \in \{1,\dots,l\}$, note that 
\begin{equation} \label{eq:homotopy on linear factors}
H(\beta,\rho)(\varpi - [\overline{u}_i]) = \max\{p^{-1} \rho, \lambda_r([\overline{u}] - [\overline{u}_i])\};
\end{equation}
from this formula
(and the fact that it suffices to check (b) for $x$ in the dense subring $B_{L,E}[g^{-1}]$ of $C$),
we may easily deduce (a) and (b).
\end{proof}

\begin{cor} \label{C:nonzero by convexity}
Suppose that $C$ is connected.
\begin{enumerate}
\item[(a)]
There exists $\delta \in \calM(C)$ such that $\beta \wedge \delta = \delta$ for all $\beta \in \calM(C)$.
\item[(b)]
For each $\beta \in \calM(C)$, there exists $\rho > 0$ such that
the inverse image of $\calM(C)$ under $H(\beta, \cdot)$ equals $[0,\rho]$ and
$H(\beta,\rho) = \delta$.
\item[(c)]
For any $\beta \in \calM(C)$ of positive radius and any nonzero $x \in C$, we have $\beta(x) \neq 0$.
\end{enumerate}
\end{cor}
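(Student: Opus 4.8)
The plan is to read everything off the tree-like structure of $\calM(B^I_{L,E})$ — the homotopy $H$ (Definition~\ref{D:homotopy}), the join $\wedge$ (Definition~\ref{D:join}), and the description of complementary components (Lemma~\ref{L:complementary components}) — together with the convexity input of Lemma~\ref{L:convex}. For points $\gamma,\gamma'$ of $\calM(B^I_{L,E})$ write $\gamma\preceq\gamma'$ to mean $\gamma' = H(\gamma,\rho)$ for some $\rho$; by \eqref{eq:composite homotopy} this is transitive, and by the domination criterion recalled in Remark~\ref{R:geometric analogy} it coincides with ``$\gamma(f)\le\gamma'(f)$ for all $f$'', hence is a partial order, with $\gamma\wedge\gamma' = \gamma'$ exactly when $\gamma\preceq\gamma'$. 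Recall that $\calM(C)$, which is connected because $C$ is, is a closed subspace of $\calM(B^I_{L,E})$. The crux is to prove, using Lemma~\ref{L:complementary components}: (i) $\beta\wedge\gamma\in\calM(C)$ whenever $\beta,\gamma\in\calM(C)$; and (ii) for each $\beta\in\calM(C)$ the set $S_\beta = \{\rho\in[0,1]:H(\beta,\rho)\in\calM(C)\}$ is a single closed interval $[0,\rho_\beta]$ — it is a closed set containing $0$ by Lemma~\ref{L:convex}(a), so only connectedness is at stake. Both reduce to the observation that a point $\eta\in\calM(B^I_{L,E})\setminus\calM(C)$ cannot \emph{separate} two points of $\calM(C)$, i.e.\ cannot place them in distinct connected components of $\calM(B^I_{L,E})\setminus\{\eta\}$: those components are open (Lemma~\ref{L:complementary components}) and $\calM(C)\subseteq\calM(B^I_{L,E})\setminus\{\eta\}$ is connected. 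For (i): if $\eta := \beta\wedge\gamma$ lay outside $\calM(C)$, then $\eta\wedge\beta = \eta = \eta\wedge\gamma$ but $\beta\wedge\gamma = \eta$, so by Lemma~\ref{L:complementary components} (neither clause (a) nor clause (b) of its equivalence relation applies to the pair $\beta,\gamma$ with center $\eta$) the points $\beta,\gamma$ lie in distinct components — contradiction. For (ii): if $S_\beta$ were disconnected it would, being closed with supremum $\rho_\beta\in S_\beta$ and containing $0$, omit some $\rho'\in(0,\rho_\beta)$; then $\eta := H(\beta,\rho')\notin\calM(C)$ satisfies $\beta\preceq\eta\preceq H(\beta,\rho_\beta)$, so $\eta\wedge\beta = \eta$ while $\eta\wedge H(\beta,\rho_\beta) = H(\beta,\rho_\beta)\ne\eta$, and again $\beta$ and $H(\beta,\rho_\beta)$ lie in distinct components of $\calM(B^I_{L,E})\setminus\{\eta\}$ — contradiction.

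Granting (i)–(ii): for (a) put $\delta_\beta := H(\beta,\rho_\beta)\in\calM(C)$, which by (ii) is the $\preceq$-largest element of $\{x\in\calM(C):\beta\preceq x\}$. One checks, using \eqref{eq:composite homotopy}, that $\beta\preceq\gamma$ (with $\beta,\gamma\in\calM(C)$) implies $\delta_\beta = \delta_\gamma$: the inclusion $\{x:\gamma\preceq x\}\subseteq\{x:\beta\preceq x\}$ gives $\delta_\gamma\preceq\delta_\beta$, while the parameter realizing $\beta\preceq\gamma$ lies in $S_\beta = [0,\rho_\beta]$, whence $\gamma\preceq\delta_\beta$ and so $\delta_\beta\preceq\delta_\gamma$ by maximality of $\delta_\gamma$. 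For arbitrary $\beta,\gamma\in\calM(C)$ apply this to $\beta\preceq\beta\wedge\gamma$ and to $\gamma\preceq\beta\wedge\gamma$, valid by (i), to get $\delta_\beta = \delta_{\beta\wedge\gamma} = \delta_\gamma$; write $\delta$ for the common value. Then $\delta\in\calM(C)$ and $\beta\preceq\delta$, i.e.\ $\beta\wedge\delta = \delta$, for all $\beta\in\calM(C)$: this is (a). Part~(b) follows at once: by (ii) the preimage of $\calM(C)$ under $H(\beta,\cdot)$ is $[0,\rho_\beta]$, and $H(\beta,\rho_\beta) = \delta_\beta = \delta$ by (a); and $\rho_\beta>0$ because $H(\beta,\rho_\beta) = \delta\ne\beta = H(\beta,0)$ when $\beta\ne\delta$, whereas if $\beta = \delta$ then — choosing any other point $\gamma\in\calM(C)$ (if none exists, $C$ is a field by uniformity and the statement is vacuous) — the relation $\delta = H(\gamma,\rho_\gamma)$ with $\rho_\gamma>0$ forces $H(\delta,\rho) = \delta$ for all $\rho\le\rho_\gamma$ by \eqref{eq:composite homotopy}, so $\delta$ has positive radius.

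For (c), suppose $\beta$ has positive radius $\rho_0>0$ and $\beta(x) = 0$ for some $x\in C$; we show $x = 0$. Since $H(\beta,\rho) = \beta$ for $\rho\in[0,\rho_0]$, the function $\rho\mapsto H(\beta,\rho)(x)$ vanishes there. By (b) we have $H(\beta,\rho)\in\calM(C)$ for all $\rho\in[0,\rho_\beta]$, where $\rho_\beta\ge\rho_0$, so Lemma~\ref{L:convex}(b) applies on each piece of the partition of $[0,\rho_\beta]$ provided by Lemma~\ref{L:convex}(a); on each piece this function is identically zero or nowhere zero, so, vanishing near $0$ and being continuous across the breakpoints, it vanishes on all of $[0,\rho_\beta]$. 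Hence $\delta(x) = H(\beta,\rho_\beta)(x) = 0$. Now for every $\gamma\in\calM(C)$, part~(a) gives $\gamma\preceq\delta$, so $\gamma(x)\le\delta(x) = 0$ by the domination criterion; thus the spectral seminorm of $x$ vanishes, and since $C$ is uniform (Lemma~\ref{L:uniform}) this equals the norm of $x$, so $x = 0$. Therefore $\beta(x)\ne 0$ for all nonzero $x$.

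The step I expect to be the main obstacle is (i)–(ii) — extracting from Lemma~\ref{L:complementary components} the principle that no exterior point separates two points of the connected set $\calM(C)$, and keeping straight the bookkeeping of $\preceq$ and $\wedge$ (in particular antisymmetry of $\preceq$, for which the domination criterion of Remark~\ref{R:geometric analogy} is the convenient tool). Once (a), (b), and uniformity are available, the convexity propagation and the final descent from $\delta(x) = 0$ to $x = 0$ in (c) are routine.
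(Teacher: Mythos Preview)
Your overall strategy matches the paper's: use Lemma~\ref{L:complementary components} together with the connectedness of $\calM(C)$ to show that no exterior point can separate $\calM(C)$, deduce the existence of a unique top point $\delta$, and then use Lemma~\ref{L:convex}(b) and uniformity for part~(c). Your claims (i) and (ii) are correct and are a clean repackaging of what the paper does; in fact (i), that $\calM(C)$ is closed under $\wedge$, is a pleasant statement the paper does not isolate explicitly.

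There is, however, a genuine gap in your argument for (c). The domination criterion you invoke from Remark~\ref{R:geometric analogy} says that $\gamma\preceq\delta$ is equivalent to $\gamma(f)\le\delta(f)$ for all $f\in B^I_{L,E}$, not for all $f\in C$. Once you pass to the rational localization $C$, domination can fail: if $g\in B^I_{L,E}$ is the denominator defining the localization and $\gamma(g)<\delta(g)$, then $\gamma(g^{-1})>\delta(g^{-1})$. So from $\gamma\preceq\delta$ and $\delta(x)=0$ you cannot conclude $\gamma(x)=0$ for arbitrary $x\in C$ by domination alone.

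The fix is to reuse the very propagation argument you already gave going \emph{up} from $\beta$ to $\delta$, but now going \emph{down} from $\delta$ to $\gamma$: by (b) you have $\delta = H(\gamma,\rho_\gamma)$ with $H(\gamma,\rho)\in\calM(C)$ for all $\rho\in[0,\rho_\gamma]$, so Lemma~\ref{L:convex}(b) applies piecewise to $\rho\mapsto H(\gamma,\rho)(x)$ on $[0,\rho_\gamma]$; since this function vanishes at $\rho_\gamma$ and is identically zero or nowhere zero on each piece, it vanishes everywhere, whence $\gamma(x)=0$. This is exactly how the paper finishes. (A minor separate point: your handling of the degenerate case $\calM(C)=\{\delta\}$ in (b) is a bit loose---uniformity plus a one-point spectrum does not immediately make $C$ a field, and (b) still asserts $\rho>0$---but the paper also passes over this without comment.)
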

\begin{proof}
For $\beta \in \calM(C)$, by Lemma~\ref{L:convex} the inverse image of 
$\calM(C)$ under $H(\beta, \cdot): [0,1] \to \calM(B^I_{L,E})$ 
consists of a finite disjoint union of closed intervals. One of these intervals contains $0$; let $\rho_C(\beta)$ be its right endpoint and put $\delta_C(\beta) = H(\beta,\rho_C(\beta))$.
In this way we define a map $\delta_C: \calM(C) \to \calM(C)$.

We claim that for $\beta, \gamma \in \calM(C)$, $\delta_C(\beta) \wedge \gamma = \delta_C(\beta)$. To check this, put $\rho = \rho_C(\beta)$; we may assume that $\rho < 1$, as otherwise the claim is obvious. For $\epsilon > 0$ sufficiently small, we have $\rho + \epsilon \leq 1$ and $H(\beta, \rho+\epsilon) \notin \calM(C)$.
For each such $\epsilon$, we may form an open partition of $\calM(C)$ by 
intersecting with the connected components of $\calM(B^I_{L,E}) \setminus H(\beta, \rho+\epsilon)$ described in Lemma~\ref{L:complementary components}. Since $\calM(C)$ is connected, it must be contained in the component of $\calM(B^I_{L,E}) \setminus H(\beta, \rho+\epsilon)$ containing $\beta$; in particular,
$H(\beta, \rho+\epsilon) \wedge \gamma = H(\beta, \rho+\epsilon)$ for all $\gamma \in \calM(C)$. Since this holds for $\epsilon > 0$ sufficiently small, it holds also for $\epsilon = 0$, proving the claim.

In particular, for $\beta, \gamma \in \calM(C)$, we have $\delta_C(\beta) \wedge \delta_C(\gamma) = \delta_C(\beta)$. By symmetry, we deduce that the image of the map $\delta_C$ consists of a single point $\delta$. This immediately implies (a) and (b).
(In the context of Remark~\ref{R:geometric analogy}, these statements correspond to the usual description of a connected affinoid subspace of $\calM(K\{T\})$, at least when $K$ is algebraically closed, as a closed disc minus a finite union of open subdiscs.)

Suppose now that $x \in C$ and there exists $\beta \in \calM(C)$ of positive radius with $\beta(x) = 0$.
By Lemma~\ref{L:convex}(b), we deduce that $H(\beta, \rho)(x) = 0$ for all
$\rho \in (0, \rho_C(\beta)]$, and in particular $\delta(x) = 0$.
For any $\gamma \in \calM(C)$, we may apply Lemma~\ref{L:convex}(b) again to deduce that
$H(\gamma,\rho)(x) = 0$ for all $\rho \in (0,\rho_C(\gamma)]$, and hence $\gamma(x) = 0$ by continuity. By Lemma~\ref{L:uniform}, this implies $x=0$.
\end{proof}
\begin{cor} \label{C:domain}
The ring $C$ is a finite direct sum of integral domains.
\end{cor}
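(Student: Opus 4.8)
The plan is to use the noetherian property of $C$ (Theorem~\ref{T:strongly noetherian Robba2}) to reduce to the case where $\operatorname{Spec} C$ is connected, and then to exploit the homotopy structure on $\calM(C)$ from Section~\ref{sec:pid} to show that such a $C$ is an integral domain. Since $C$ is noetherian, $\operatorname{Spec} C$ is a noetherian topological space, hence has only finitely many irreducible components and a fortiori only finitely many connected components; being finitely many pairwise disjoint closed sets, these components are also open, and they induce a ring decomposition $C \cong \bigoplus_{i=1}^{k} C_i$ with each $\operatorname{Spec} C_i$ connected. I would then check that each $(C_i, C_i^+)$ is again a rational localization of $(B^I_{L,E}, B^{I,+}_{L,E})$: if $e_i \in C$ is the corresponding idempotent, then $v(e_i) = v(e_i)^2 \in \{0,1\}$ for every valuation $v$ on $C$, so $\{v \in \Spa(C, C^+) : v(1) \leq v(e_i)\}$ is a clopen rational subset whose associated localization is the projection $C \to C_i$; since a composition of rational localizations is a rational localization, the claim follows. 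Granting this, it suffices to prove that a connected such $C$ is a domain.

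So assume $\operatorname{Spec} C$ is connected, and let $\delta \in \calM(C)$ be the point furnished by Corollary~\ref{C:nonzero by convexity}(a). The key step is to show that $\delta$ detects nonvanishing: if $x \in C$ and $\delta(x) = 0$, then $x = 0$. Let $\beta \in \calM(C)$ be arbitrary. By Corollary~\ref{C:nonzero by convexity}(b) there is $\rho > 0$ with $H(\beta, \sigma) \in \calM(C)$ for all $\sigma \in [0, \rho]$ and $H(\beta, \rho) = \delta$. Subdividing $[0, \rho]$ according to Lemma~\ref{L:convex}(a) (so that it is exhausted by the subintervals on which the homotopy stays in $\calM(C)$) and invoking the convexity/continuity dichotomy of Lemma~\ref{L:convex}(b), on each such subinterval the function $\sigma \mapsto H(\beta, \sigma)(x)$ is either identically zero or nowhere zero; since it vanishes at $\sigma = \rho$, propagating through the shared endpoints of the subintervals forces it to vanish at $\sigma = 0$, i.e.\ $\beta(x) = 0$. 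As $\beta$ was arbitrary and $C$ is uniform (Lemma~\ref{L:uniform}), the spectral norm of $x$ is $0$, so $x = 0$.

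To finish, suppose $x, y \in C$ satisfy $xy = 0$. Then $\delta(x)\,\delta(y) = \delta(xy) = 0$ in $\RR_{\geq 0}$, so $\delta(x) = 0$ or $\delta(y) = 0$, and by the previous paragraph $x = 0$ or $y = 0$. Hence the connected $C$ is an integral domain, and in general $C$ is a finite direct sum of integral domains, as claimed. (One recovers in particular that $C$ is reduced; the statement is the expected analogue of the description of a connected affinoid subdomain of the disc over an algebraically closed field, compare Remark~\ref{R:geometric analogy}, and of Corollary~\ref{C:pid1}.)

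I expect the main obstacle to be the reduction to the connected case: one must verify cleanly that the idempotent factors $C_i$ inherit the structure of rational localizations of $(B^I_{L,E}, B^{I,+}_{L,E})$, so that Corollary~\ref{C:nonzero by convexity} applies to each of them. By contrast, once $\delta$ is in hand, the passage from vanishing at $\delta$ to vanishing everywhere on $\calM(C)$ is a direct consequence of the convexity and monotonicity results already established, and introduces no new ideas.
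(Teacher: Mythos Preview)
Your argument is correct and follows the same route as the paper's proof: decompose $C$ into connected pieces using the noetherian property, then use the homotopy structure and uniformity to show each connected piece is a domain. Your treatment is in fact more careful than the paper's in one respect---you explicitly verify that each idempotent summand $C_i$ is again a rational localization of $(B^I_{L,E}, B^{I,+}_{L,E})$, which the paper tacitly assumes when invoking Corollary~\ref{C:nonzero by convexity} on the pieces.

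One simplification: your ``key step'' (that $\delta(x)=0$ forces $x=0$) is precisely Corollary~\ref{C:nonzero by convexity}(c) applied with $\beta=\delta$, once you observe that $\delta$ has positive radius. Indeed, by part~(b) there is some $\rho>0$ with $H(\beta,\rho)=\delta$, and then \eqref{eq:composite homotopy} gives $H(\delta,\sigma)=H(\beta,\max\{\rho,\sigma\})=\delta$ for all $\sigma\le\rho$, so the radius of $\delta$ is at least $\rho>0$. Thus you need not rerun the convexity-propagation argument; it is already packaged in part~(c). The paper's one-line proof does exactly this: it cites Corollary~\ref{C:nonzero by convexity} together with the existence of a point of positive radius in $\calM(C)$, and concludes.
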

\begin{proof}
Since $C$ is noetherian by Theorem~\ref{T:strongly noetherian Robba2}, it is a finite direct sum of connected subrings, each of which is a domain by Corollary~\ref{C:nonzero by convexity} and the fact that $\calM(C)$ contains a point of nonzero radius.
\end{proof}

\begin{lemma} \label{L:finitely many zeroes}
Suppose that $C$ is connected.
For $x \in C$ nonzero, there are only finitely $\beta \in \calM(C)$ for which $\beta(x) = 0$.
\end{lemma}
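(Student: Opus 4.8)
The plan is to reduce the statement to a statement about a single linear factor by using the homotopy structure established in the previous sections. First I would invoke Corollary~\ref{C:nonzero by convexity}: since $C$ is connected, there is a distinguished point $\delta \in \calM(C)$ with $\beta \wedge \delta = \delta$ for all $\beta$, and for each $\beta$ the inverse image of $\calM(C)$ under $H(\beta,\cdot)$ is an interval $[0,\rho_C(\beta)]$ ending at $\delta$. A point $\beta$ with $\beta(x)=0$ must have radius $0$: indeed by Corollary~\ref{C:nonzero by convexity}(c) any point of positive radius satisfies $\beta(x)\ne 0$ for every nonzero $x$. So the zero locus $Z(x)=\{\beta \in \calM(C): \beta(x)=0\}$ consists entirely of points of radius zero, i.e. leaves of the tree structure.

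Next, as in the proof of Lemma~\ref{L:convex}, I would enlarge $L$ (harmlessly, since enlarging $L$ only adds points to $\calM$ lying over the old ones and does not decrease the number of zeros) and apply Corollary~\ref{C:primitive factors} to write $x$, or rather a suitable approximation/denominator-cleared version of $x$ in the dense subring $B_{L,E}[g^{-1}]$ of $C$, as a unit times a finite product of primitive elements of degree $1$, say $\varpi - [\overline{u}_1], \dots, \varpi - [\overline{u}_k]$. The point is that a seminorm $\beta$ kills $x$ if and only if it kills one of these linear factors (units are never killed, and $\beta$ multiplicative), so $Z(x) \subseteq \bigcup_i Z(\varpi - [\overline{u}_i])$ and it suffices to bound $\#Z(\varpi-[\overline{u}_i]) \cap \calM(C)$ for each $i$. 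For a point $\beta = H(\varpi - [\overline{v}],0)$ of radius $0$, formula~\eqref{eq:homotopy on linear factors} gives $\beta(\varpi - [\overline{u}_i]) = \max\{0, \lambda_r([\overline{v}] - [\overline{u}_i])\}$, so $\beta(\varpi - [\overline{u}_i]) = 0$ forces $\overline{v} = \overline{u}_i$, i.e.\ $\beta$ is (the restriction to $C$ of) the point associated to the linear factor $\varpi - [\overline{u}_i]$ itself. In other words $Z(\varpi-[\overline{u}_i])\cap\calM(C)$ is a single point, and $\#Z(x) \le k < \infty$.

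The main obstacle is the passage from "$x \in C$" to "$x$ is a unit times a product of degree-$1$ primitives": $x$ lives in the rational localization $C$, not in $B_{L,E}$, so one cannot directly apply Corollary~\ref{C:primitive factors}. The clean way around this is to clear the denominator $g$: multiplying $x$ by a suitable power of the unit $g$ (respectively choosing a good approximation, since $B_{L,E}[g^{-1}]$ is dense in $C$ and ideals are closed so one may argue with a generator up to small error as in Remark~\ref{R:height}-style manipulations) puts us in $B_{L,E}$, where Corollary~\ref{C:primitive factors} applies after enlarging $L$. One must check that this manipulation does not change the zero locus in $\calM(C)$ (multiplying by units and by $g$, which is a unit in $C$, changes nothing), and that enlarging $L$ does not lose zeros; both are routine given Lemma~\ref{L:find primitive point} and the fact that every $\beta\in\calM(C)$ extends to $\calM(C')$. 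A second, smaller point to handle carefully is that a priori $x$ might not lie in the dense subring at all — but since $\beta(x)=0$ is a closed condition and the subring is dense, one reduces to the subring by continuity, exactly as in the deduction of part (b) of Lemma~\ref{L:convex}.
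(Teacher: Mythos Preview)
Your proof has a genuine gap at precisely the point you flagged as the ``main obstacle''. Neither of your two proposed workarounds succeeds. For option (a), multiplying $x$ by a power of the unit $g$ does not land you in $B_{L,E}$: the ring $C$ is the \emph{completion} of (a quotient of) $B^I_{L,E}\{T_1,\dots,T_n\}$, so a general $x \in C$ is an infinite convergent series and no finite power of $g$ clears all denominators. For option (b), density does not help because ``having finitely many zeros'' is not a condition preserved under limits. In Lemma~\ref{L:convex}(b) the reduction to the dense subring works because convexity of $t \mapsto \log H(\beta,e^{-t})(x)$ is stable under pointwise limits; by contrast, if $x' \in B_{L,E}[g^{-1}]$ approximates $x$, the inclusion you would need is $Z(x) \subseteq Z(x')$, but at a point $\beta$ with $\beta(x)=0$ one only gets $\beta(x')$ small, not zero. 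Worse, as the approximation improves the factorization of $x'$ (and hence the location of its zeros) changes, so no compactness-type argument is available either.

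The paper avoids global factorization entirely. It argues locally: for each $\beta$ with $\beta(x)=0$ (necessarily of radius $0$), it enlarges $L$ so that $\beta$ lifts to $\beta' = H(\varpi-[\overline{u}],0)$, and then uses Krull's intersection theorem in the noetherian ring $C'$ at the maximal ideal $\gothm = (\varpi-[\overline{u}])$ to extract the exact power $(\varpi-[\overline{u}])^n$ dividing $x$. The cofactor $y$ is then a unit near $\beta'$, giving a punctured neighborhood of $\beta$ in $\calM(C)$ on which $x$ has no zeros. Since $Z(x)$ is closed in the compact space $\calM(C)$ and discrete, it is finite. The key difference is that the paper factors $x$ only \emph{locally} (one primitive factor at a time, at each zero), which is exactly what the noetherian machinery provides, whereas your approach requires a global factorization that is simply unavailable for general $x \in C$.
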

\begin{proof}
By Corollary~\ref{C:domain}, $C$ is an integral domain.
It suffices to check that for each $\beta \in \calM(C)$ for which $\beta(x) = 0$, there exists a neighborhood $U$ of $\beta$ in $\calM(C)$ such that $\gamma(x) \neq 0$ for $\gamma \in U \setminus \{\beta\}$. Note that $\beta$ must have radius 0 by
Corollary~\ref{C:nonzero by convexity}.

By Lemma~\ref{L:find primitive point}, we can choose some $L'$ and some $\overline{u} \in \gothm_{L'} \setminus \{0\}$ such that $\beta' = H(\varpi-[\overline{u}], 0)$ restricts to $\beta$.
By Corollary~\ref{C:domain}, $C'$ is a finite direct sum of integral domains.
Let $\gothm$ be the ideal of $B^I_{L',E}$ generated by $\varpi - [\overline{u}]$;
then $B^I_{L',E}/\gothm \cong \calH(\beta')$, so in particular $\gothm$ is maximal.

Suppose by way of contradiction that $x \in \gothm^n C'$ for all $n$.
By Krull's intersection theorem \cite[Corollary~5.4]{eisenbud},
$x$ then vanishes in the local ring $C'_{\gothm}$, and hence in the connected
component of $C'$ whose spectrum contains $\gothm$.
In particular, there exists an open neighborhood $U'$ of $\beta'$ in $\calM(C')$ such that $\gamma(x) = 0$ for all $\gamma \in U'$; since $\beta$ has radius 0, $U'$ restricts to a neighborhood $U$ of $\beta$ in $\calM(C)$. However, any such $U$ contains points of positive radius, contradicting Corollary~\ref{C:nonzero by convexity}.

By Lemma~\ref{L:same localization}, we can find some $n$ such that $(\varpi - [\overline{u}])^n$ divides $x$ in $C'$
and the quotient $y$ has nonzero image in $\calH(\beta')$.
We may thus choose a neighborhood $U'$ of $\beta'$ in $\calM(C')$ such that $\gamma(y) \neq 0$ for all $\gamma \in U'$, and hence $\gamma(x) \neq 0$ for all $\gamma \in U' \setminus \{\beta'\}$. Since $\beta$ has radius 0, $U'$ restricts to a neighborhood of $U$ in $\calM(C)$ of the desired form.
\end{proof}

\begin{theorem} \label{T:strong PID}
The ring $C$ has the following properties.
\begin{enumerate}
\item[(a)]
The ring $C$ is a direct sum of finitely many noetherian integral domains $C_1,\dots, C_n$.
\item[(b)]
For $i=1,\dots,n$, every element of $C_i$ can be written as an element of $W(\gotho_L)$ times a unit.
\item[(c)]
For $i=1,\dots,n$, $C_i$ is a principal ideal domain.
\end{enumerate}
\end{theorem}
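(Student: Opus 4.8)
Part (a) is immediate: by Theorem~\ref{T:strongly noetherian Robba2} the ring $C$ is noetherian, and by Corollary~\ref{C:domain} it is a finite direct sum $C_1 \oplus \cdots \oplus C_n$ of connected subrings, each of which is an integral domain. It therefore suffices to argue each summand separately, so for the rest of the proof I assume $C$ is connected; write $\delta$ for the distinguished point of $\calM(C)$ supplied by Corollary~\ref{C:nonzero by convexity}(a), which has positive radius, and recall from Corollary~\ref{C:nonzero by convexity}(c) that every zero of a nonzero element of $C$ in $\calM(C)$ has radius $0$.

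For (b), let $x \in C$ be nonzero; if $x$ is a unit there is nothing to do, so assume otherwise. Then $(x)$ is closed (noetherianness), so $C/(x)$ is a nonzero Banach ring and $Z(x) := \{\beta \in \calM(C) : \beta(x) = 0\}$ (the image of $\calM(C/(x))$) is nonempty by \cite[Theorem~1.2.1]{berkovich1}, and finite by Lemma~\ref{L:finitely many zeroes}. The plan is: (i) pass to a suitable perfect overfield $L'$ so that, by repeated application of Lemma~\ref{L:find primitive factor} together with Lemma~\ref{L:same localization} (to extract the \emph{exact} order of vanishing at each point, the total order being finite by Krull's intersection theorem), we obtain in $C'$ a factorization $x = z' u'$ in which $z' = \prod_\beta (\varpi - [\overline{u}_\beta])^{n_\beta} \in W(\gotho_{L'})_E$ is a product of powers of primitive elements of degree $1$ accounting for all the zeros of $x$, with multiplicity, and $u' \in C'$ has no zeros in $\calM(C')$, hence is a unit (a proper closed ideal would have a point in its Gel'fand spectrum, as above); (ii) descend $z'$ via Lemma~\ref{L:primitive descent} to write $z' = w z$ with $w \in W(\gotho_{L'})_E^\times$ and $z \in W(\gotho_L)_E$; (iii) observe $x/z = u' w \in (C')^\times$, and that since $x, z \in C$ the exact sequence of Lemma~\ref{L:tensor product}(b), base-changed along the flat map $B^I_{L,E} \to C$, forces both $x/z$ and its inverse to lie in $C$, so that $x = z \cdot (x/z)$ is the desired factorization. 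To apply Lemma~\ref{L:primitive descent} in (ii) one must verify, for each $\beta \in \calM(L' \widehat{\otimes}_L L')$, that the two pullbacks of $z'$ to $W(\gotho_{\calH(\beta)})_E$ generate the same ideal and have ratio mapping to $1$ in $W(\kappa_{\calH(\beta)})$; this is where one uses that $x$ lies in $C$, so its two pullbacks agree, whence $z'$ — pinned down by $x$ as the product of its primitive factors — has matching pullbacks.

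For (c), let $\mathfrak{a}$ be a nonzero ideal of $C$; since $C$ is noetherian, $\mathfrak{a} = (x_1, \dots, x_k)$, and by (b) we may replace each $x_j$ by some $z_j \in W(\gotho_L)_E$. Put $V = \bigcap_j Z(z_j)$, a finite set. After passing to a suitable $L'$, attach to each $\beta \in V$ a primitive element $\varpi - [\overline{u}_\beta]$ as in (b), let $m_\beta$ be the minimum over $j$ of its multiplicity in $z_j$, and set $z = \prod_{\beta \in V}(\varpi - [\overline{u}_\beta])^{m_\beta} \in W(\gotho_{L'})_E$. Then in $C'$ one checks that $z \mid z_j$ for every $j$ while $(z_1/z, \dots, z_k/z)$ has empty common zero locus and hence equals $C'$, so $\mathfrak{a}C' = (z)C'$. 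Descend $z$ to $z_0 \in W(\gotho_L)_E$ via Lemma~\ref{L:primitive descent} exactly as in (b), and then descend the equality of ideals along $C \to C'$ by a faithfully flat descent argument of the type underlying Lemma~\ref{L:tensor product}: $\mathfrak{a} = \mathfrak{a}C' \cap C = (z_0)C' \cap C = (z_0)$. Thus every ideal of $C$ is principal, proving (c).

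The main obstacle is the descent step, i.e.\ verifying the hypotheses of Lemma~\ref{L:primitive descent}: one must show that the element $z'$ (resp.\ $z$) assembled over $L'$ from primitive factors is genuinely intrinsic to $x$ (resp.\ to $\mathfrak{a}$), so that its two pullbacks agree up to a unit congruent to $1$ modulo the kernel of $W(\gotho_{\calH(\beta)})_E \to W(\kappa_{\calH(\beta)})$, rather than merely up to an arbitrary unit — the distinction being exactly what the referee's counterexample after Lemma~\ref{L:primitive descent} shows to be essential. A secondary technical point is the bookkeeping over $L'$, where a single point of $Z(x)$ may correspond to several primitive elements and where ``divides to exact order'' must be extracted by iterating Lemma~\ref{L:find primitive factor} and Lemma~\ref{L:same localization}; one also needs that rational localizations such as $B^I_{L,E} \to C$ are flat, and that the base change $B^I_{L,E} \to B^I_{L',E}$ is faithfully flat, in order to run the descent of ideals at the end of (c).
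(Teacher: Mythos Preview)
Your argument for (a) matches the paper's. For (b) and (c), however, the paper takes a substantially simpler route that avoids nearly all of the difficulties you flag.

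For (b), the paper does \emph{not} attempt to extract the full factorization of $x$ over $L'$ and then descend it in one shot. Instead it argues iteratively: given $x$ a nonunit, lift each of its finitely many zeros in $\calM(C)$ to a single primitive element $\varpi - [\overline{u}_i]$ over some $L'$, apply Lemma~\ref{L:primitive descent} to the product $\prod_i(\varpi - [\overline{u}_i])$ (one factor per zero, no multiplicities) to produce a nonunit divisor $y_0 \in W(\gotho_L)_E$ of $x$, set $x_1 = x/y_0$, and repeat. The ascending chain $(x_0) \subset (x_1) \subset \cdots$ terminates by noetherianness, and then $x$ is a unit times the product of the $y_i$. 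This sidesteps the need to track exact multiplicities, to worry about a single zero of $x$ in $\calM(C)$ splitting into several zeros in $\calM(C')$, or to appeal to Krull's intersection theorem. The verification of the hypotheses of Lemma~\ref{L:primitive descent} is also lighter in this setup: each factor $\varpi - [\overline{u}_i]$ reduces to $\varpi$ modulo $\gothm_{L'}$, so the product reduces to $\varpi^l$ and the ratio condition is automatic.

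For (c), the paper's deduction is a single line: once (b) is known, any finitely generated ideal $(x_1,\dots,x_k)$ of $C$ can be rewritten as $(z_1,\dots,z_k)$ with $z_j \in W(\gotho_L)_E \subset B^I_{L,E}$, and since $B^I_{L,E}$ is already a principal ideal domain (the paper invokes Corollary~\ref{C:pid1}; equivalently the known fact that $B^I_{L,E}$ is a PID), the ideal $(z_1,\dots,z_k)_{B^I_{L,E}} = (d)$ is principal there and hence also in $C$. Your argument for (c) is unnecessarily elaborate and, more seriously, rests on the faithful flatness of $B^I_{L,E} \to B^I_{L',E}$ (and of $C \to C'$) to descend the equality of ideals. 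This is not established anywhere in the paper and is not obviously true; the paper's route avoids it entirely by never leaving $L$ once (b) is in hand.

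In short: the missing idea is the noetherian-induction trick in (b), which lets you peel off one $W(\gotho_L)_E$-divisor at a time rather than descending the full factorization; and (c) then follows immediately from (b) without any further descent.
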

The fact that $B^I_{L,E}$ itself is a principal ideal domain was known previously;
see \cite[Proposition~2.6.8]{part1}.
\begin{proof}
We have (a) thanks to Theorem~\ref{T:strongly noetherian Robba2} and 
Corollary~\ref{C:domain}. We may thus assume hereafter that $C$ itself is a noetherian integral domain.

Choose any nonzero $x \in C$.
By Lemma~\ref{L:finitely many zeroes},
there are only finitely many $\beta \in \calM(C)$ for which $\beta(x) = 0$.
If there are no such $\beta$, then $x$ is a unit by \cite[Corollary~2.3.5]{part1}.
Otherwise, by Lemma~\ref{L:find primitive factor}, each such $\beta$ may be lifted to
$H(\varpi - [\overline{u}], 0)$ for some $L'$ and some $\overline{u} \in \gothm_{L'} \setminus \{0\}$. We may make a single choice of $L'$ and then let $\overline{u}_1,\dots,\overline{u}_l$ be the resulting values of $\overline{u}$.
We may then apply Lemma~\ref{L:primitive descent} to the product
$\prod_{i=1}^l (\varpi - [\overline{u}_i])$ to write it as a unit in $W(\gotho_{L'})$ times some element $y_0 \in W(\gotho_L)$, which then must be a divisor of $x$ in $C$. 
We thus form a sequence $x_0 = x, x_1, \dots$ of elements of $C$ in which for each $i \geq 0$, we have $x_i = y_i x_{i+1}$ for some $y_i \in W(\gotho_L)$ which is not a unit in $C$. Since $C$ is noetherian, we cannot extend this sequence indefinitely; we then have that $x$ is the product of the $y_i$ times a unit. This proves (b), which implies (c) because
$A^r_{L,E}$ is a principal ideal domain by Corollary~\ref{C:pid1}.
\end{proof}

\section{Structure of \'etale morphisms}
\label{sec:etale}

To conclude, we extend the preceding results to \'etale morphisms.
\begin{hypothesis}
Throughout \S\ref{sec:etale},
let $(B^I_{L,E}, B^{I,+}_{L,E}) \to (C,C^+)$ be a morphism of adic Banach rings which is \'etale in the sense of Huber \cite[Definition~1.6.5]{huber}. In particular, $C$ is a quotient of $B^I_{L,E}\{T_1,\dots,T_n\}$ for some $n$, so it is again strongly noetherian by Theorem~\ref{T:strongly noetherian Robba2}.
\end{hypothesis}

\begin{lemma} \label{L:etale factorization}
There exist finitely many rational localizations $\{(C,C^+) \to (D_i, D_i^+)\}_i$
such that $\cup_i \Spa(D_i, D_i^+) = \Spa(C,C^+)$ and for each $i$,
$(B^I_{L,E}, B^{I,+}_{L,E}) \to (D_i,D_i^+)$ factors as a connected rational localization $(B^I_{L,E}, B^{I,+}_{L,E}) \to (C_i,C_i^+)$
followed by a finite \'etale morphism $(C_i, C_i^+) \to (D_i, D_i^+)$ with $D_i$ also connected.
\end{lemma}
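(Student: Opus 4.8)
The plan is to start from Huber's local structure theory for \'etale morphisms and then to ``move'' the rational localization past the finite \'etale map, using a spreading-out argument for idempotents along the fibres.

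First I would invoke \cite[Definition~1.6.5]{huber} and the accompanying description of \'etale morphisms (locally on the source, a composition of an open immersion and a finite \'etale morphism) to obtain a finite cover of $\Spa(C,C^+)$ by rational subspaces $\Spa(C^{(\alpha)},C^{(\alpha)+})$ such that each composite $(B^I_{L,E},B^{I,+}_{L,E})\to(C^{(\alpha)},C^{(\alpha)+})$ factors as a finite \'etale morphism $(B^I_{L,E},B^{I,+}_{L,E})\to(C_0^{(\alpha)},C_0^{(\alpha)+})$ followed by a rational localization $(C_0^{(\alpha)},C_0^{(\alpha)+})\to(C^{(\alpha)},C^{(\alpha)+})$, with $\Spa(C^{(\alpha)})$ a quasi-compact open subset of $\Spa(C_0^{(\alpha)})$. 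It then suffices to treat one such $\alpha$: write $\pi\colon\Spa(C_0,C_0^+)\to\Spa(B^I_{L,E},B^{I,+}_{L,E})$ for the finite \'etale map and regard $\Spa(C,C^+)$ as an open subspace of $\Spa(C_0,C_0^+)$. By Theorem~\ref{T:strongly noetherian Robba2} all rings in sight are noetherian, so $C_0$, $C$, and their rational localizations are finite products of connected rings.

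Next, fix $d\in\Spa(C,C^+)$ and put $b=\pi(d)$. The fibre $\pi^{-1}(b)$ is finite, and its trace $T=\pi^{-1}(b)\cap\Spa(C)$ is the support of a clopen subscheme of the finite \'etale $\calH(b)$-scheme $\pi^{-1}(b)$, hence corresponds to an idempotent $e_b$ of the finite \'etale $\calH(b)$-algebra $C_0\widehat{\otimes}_{B^I_{L,E}}\calH(b)$. The key point is that, since $\calH(b)$ is the completed filtered colimit of the rings $\calO_{\Spa(B^I_{L,E})}(V)$ over rational neighbourhoods $V$ of $b$, and finite \'etale algebras together with their idempotents propagate along such colimits, $e_b$ lifts to an idempotent $e$ of $\calO_{\Spa(C_0)}(\pi^{-1}(V))$ for some rational subspace $V\ni b$ of $\Spa(B^I_{L,E})$. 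Shrinking $V$ further and using that $\Spa(C)$ is open while $\pi^{-1}(V)$ is quasi-compact, one can arrange that the clopen subset $W=\{v\in\pi^{-1}(V):v(e)\neq 0\}$, which contains $d$ by construction, is contained in $\Spa(C)$. As $d$ varies these $W$ form an open cover of $\Spa(C)$, and I extract a finite subcover by quasi-compactness of $\Spa(C,C^+)$.

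Finally, for each such $W$, let $(D,D^+)$ be the rational localization of $(C_0,C_0^+)$ with adic spectrum $W$ (cut out inside $\pi^{-1}(V)$ by $e$, and inside $\Spa(C_0)$ as a rational subspace of the rational subspace $\pi^{-1}(V)$). Since $W\subseteq\Spa(C)$ is a rational subspace of $\Spa(C_0)$ contained in the rational subspace $\Spa(C)$, it is also a rational subspace of $\Spa(C)$, so $(C,C^+)\to(D,D^+)$ is a rational localization. On the other hand $\calO(\pi^{-1}(V))=C_0\otimes_{B^I_{L,E}}\calO(V)$ is finite \'etale over $\calO(V)$, and $D$ is the direct factor of it cut out by $e$, hence is again finite \'etale over $\calO(V)$; thus $(B^I_{L,E},B^{I,+}_{L,E})\to(D,D^+)$ factors as the rational localization $(B^I_{L,E},B^{I,+}_{L,E})\to(\calO(V),\calO(V)^+)$ followed by a finite \'etale morphism. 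To get the connectedness assertions I would first replace $\calO(V)$ by one of its finitely many connected direct summands, which is still a rational localization of $B^I_{L,E}$ by Theorem~\ref{T:strong PID}, and $D$ by the corresponding summand, then replace $D$ by one of its finitely many connected components, which is a direct factor of a finite \'etale algebra over a connected ring, hence finite \'etale over it, and still a rational localization of $C$. Running over all $W$ produces the desired finite collection $\{(C,C^+)\to(D_i,D_i^+)\}$. The main obstacle is the idempotent spreading-out of the third paragraph: making precise that finite \'etale algebras over $\calH(b)$, with their idempotents, descend to finite \'etale algebras over $\calO_{\Spa(B^I_{L,E})}(V)$ for $V$ a sufficiently small rational neighbourhood of $b$, and controlling the resulting clopen loci so they land inside $\Spa(C)$; this is a continuity property of the finite \'etale site of $\Spa(B^I_{L,E})$ at a point, which should be available from Huber's theory of \'etale morphisms of adic spaces or provable directly via Elkik-type approximation over the noetherian Banach rings $\calO(V)$.
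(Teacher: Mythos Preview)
The paper's proof is a one-line citation to \cite[Lemma~2.2.8]{huber}, which establishes precisely this local factorization (rational localization followed by finite \'etale) for \'etale morphisms between strongly noetherian adic spaces in general. Your proposal is essentially a reconstruction of Huber's argument in this particular setting: you begin from the ``finite \'etale then open immersion'' local description, then swap the order by lifting fibrewise idempotents to a rational neighbourhood of the image point. The outline is correct, and the step you single out as the main obstacle---spreading an idempotent of $C_0 \widehat{\otimes}_{B^I_{L,E}} \calH(b)$ to $C_0 \widehat{\otimes}_{B^I_{L,E}} \calO(V)$ for $V$ small---is exactly the content Huber's lemma encapsulates (it follows, for instance, from the henselian property of the stalks or from approximation \`a la Elkik over the noetherian Banach rings $\calO(V)$). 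So your route is sound but circuitous: you are reproving the cited lemma rather than invoking it.

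One minor point: your appeal to Theorem~\ref{T:strong PID} for the connectedness step is stronger than needed and slightly misdirected. That theorem tells you the connected summands of $\calO(V)$ are principal ideal domains, but what you actually use is only that $\calO(V)$ is noetherian (hence has finitely many idempotents) and that a direct summand of a rational localization, being cut out by an idempotent, is again a rational localization. Both facts are elementary and do not require the PID structure.
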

\begin{proof}
See \cite[Lemma~2.2.8]{huber}.
\end{proof}

\begin{remark} \label{R:etale factorization}
We record some immediate consequences of Lemma~\ref{L:etale factorization}.
\begin{enumerate}
\item[(a)]
By \cite[Theorem~2.2]{huber2}, the structure presheaf on $\Spa(C,C^+)$ is a sheaf,
so the map $C \to \bigoplus_i D_i$ is injective.
\item[(b)]
By Theorem~\ref{T:strong PID}(c),
the ring $D_i$ is a connected finite \'etale algebra over the principal ideal domain $C_i$.
It is therefore a Dedekind domain.
\item[(c)]
By Lemma~\ref{L:uniform} and \cite[Proposition~2.8.16]{part1}, $D_i$ is uniform.
By (a), this implies that $C$ is uniform.
\end{enumerate}
\end{remark}

\begin{lemma} \label{L:domain2}
Suppose that $C$ is connected. Then $C$ is an integral domain; moreover, for any rational localization $(C,C^+) \to (D,D^+)$, the map $C \to D$ is injective.
\end{lemma}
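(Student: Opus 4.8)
The plan is to reduce this statement to results already in hand, using the étale factorization of Lemma~\ref{L:etale factorization}. First I would establish that $C$ is an integral domain. Since $C$ is strongly noetherian, hence noetherian, it decomposes canonically as a finite direct sum of connected rings; as $C$ is assumed connected, it suffices to rule out nilpotents, i.e., to show $C$ is reduced. This follows from uniformity: by Remark~\ref{R:etale factorization}(c), $C$ is uniform, and a uniform Banach ring is reduced (a nonzero nilpotent would have spectral seminorm zero, contradicting that the spectral seminorm is a norm). So $C$ is a connected reduced noetherian ring. To upgrade ``reduced and connected'' to ``integral domain,'' I would argue that $C$ has no nontrivial idempotents (connectedness) and that its minimal primes are in bijection with the connected components of $\Spec C$; but a reduced noetherian ring with a single minimal prime is a domain. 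The one subtlety is that connectedness of the Banach ring $C$ a priori refers to $\calM(C)$ being connected rather than $\Spec C$; however, for a noetherian Banach ring whose ideals are all closed, idempotents of $C$ correspond to clopen subsets of $\calM(C)$ (via \cite[Theorem~1.2.1]{berkovich1} applied to the factor rings), so connectedness of $\calM(C)$ does force $C$ to have no nontrivial idempotents, hence $\Spec C$ connected, hence (being reduced noetherian) $C$ a domain.

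Next I would handle injectivity of $C \to D$ for a rational localization $(C,C^+)\to(D,D^+)$. Apply Lemma~\ref{L:etale factorization} to the étale morphism $(B^I_{L,E},B^{I,+}_{L,E})\to (C,C^+)$ to get the cover $\{(C,C^+)\to(D_i,D_i^+)\}$ with each $D_i$ a connected Dedekind domain (Remark~\ref{R:etale factorization}(b)) and with $C\hookrightarrow \bigoplus_i D_i$ injective (Remark~\ref{R:etale factorization}(a)). Now further compose with the rational localization $(C,C^+)\to(D,D^+)$: the $\{(D,D^+)\to(D\widehat{\otimes}_C D_i, \cdot)\}$ form a rational cover of $\Spa(D,D^+)$, and by sheafiness again $D$ injects into $\bigoplus_i (D\widehat{\otimes}_C D_i)$. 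Since each $C\to D_i$ is flat (finite étale, so flat) and each $D_i$ is a localization-type base change, the composite $C\to D\widehat{\otimes}_C D_i$ factors through $D_i\to D\widehat{\otimes}_{C}D_i$, which is a rational localization of the noetherian ring $D_i$. The key input I would invoke here is that a rational localization of an integral domain in this setting is injective: this is exactly the one-point case of the argument used in \S\ref{sec:pid}, or more directly, a rational localization $D_i \to D_i'$ has dense image and $D_i'$ is again a Banach ring without nilpotents, and the kernel of $D_i\to D_i'$ is a prime-avoiding ideal contained in the kernel of $D_i\to \calH(\beta)$ for every $\beta\in\calM(D_i')$; since $\calM(D_i')$ is nonempty and $D_i$ is a domain, an element of this kernel vanishes at some point of $\calM(D_i')$ whose restriction to $\calM(D_i)$ has positive-dimensional ``reach,'' and one concludes it is zero by the uniformity of $D_i$ exactly as in Corollary~\ref{C:nonzero by convexity}(c) and Lemma~\ref{L:finitely many zeroes}. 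Thus each $C\to D\widehat{\otimes}_C D_i$ is injective on the domain $C$, and therefore so is $C\to \bigoplus_i (D\widehat{\otimes}_C D_i)$, which factors through $C\to D$; hence $C\to D$ is injective.

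The main obstacle I anticipate is the bookkeeping in the second paragraph: making precise that base-changing the finite étale cover $\{D_i\}$ of $C$ along the rational localization $C\to D$ again yields a rational cover of $\Spa(D,D^+)$, and that the relevant completed tensor products are themselves rational localizations of the $D_i$ (rather than of $D$), so that the one-point injectivity statement for rational localizations of domains can be applied. The cleanest way is probably to note that rational localizations and finite étale morphisms commute with arbitrary base change of adic Banach rings (this is built into Huber's formalism and used implicitly already in Lemma~\ref{L:etale factorization}), so $D\widehat{\otimes}_C D_i$ is simultaneously a finite étale $D$-algebra and a rational localization of $D_i$; injectivity of $D_i\to D\widehat\otimes_C D_i$ is then the one-point case, and injectivity of $C\to D$ follows by chasing the commutative square against the injection $C\hookrightarrow\bigoplus_i D_i$. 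I would also double-check that ``connected'' is preserved appropriately, but for the injectivity conclusion one does not actually need the $D\widehat\otimes_C D_i$ to be connected—only that they are nonzero (which holds since they surject onto the residue fields $\calH(\beta')$ for suitable $\beta'$, as in Lemma~\ref{L:find primitive factor}) and that $D_i$ is a domain.
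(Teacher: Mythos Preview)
Your argument for the first assertion has a genuine gap: a reduced, connected, noetherian ring need \emph{not} be an integral domain. The claim that ``its minimal primes are in bijection with the connected components of $\Spec C$'' is false; minimal primes correspond to \emph{irreducible} components, not connected components. A standard counterexample is $k[x,y]/(xy)$, which is reduced, noetherian, and connected (the two axes meet at the origin), but has two minimal primes. So uniformity of $C$ (giving reducedness) together with connectedness is not enough; some further analytic input specific to this situation is required.

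The paper's proof supplies exactly that input. It shows directly that for each $i$ the map $C \to D_i$ is injective, using the geometric structure of $\calM(C_i)$: if $x \in C$ maps to zero in some $D_i$, then (via the norm map $D_i \to C_i$ and Corollary~\ref{C:nonzero by convexity}) $\beta(x) = 0$ for every $\beta \in \calM(D_i)$ restricting to a point of positive radius in $\calM(C_i)$. One then propagates this vanishing from $D_i$ to any $D_j$ whose spectrum overlaps $\calM(D_i)$, because such an overlap always contains a point of positive radius downstairs. Connectedness of $C$ then forces $x$ to vanish in \emph{every} $D_i$, hence $x = 0$ by Remark~\ref{R:etale factorization}(a). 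Since $C$ embeds in the domain $D_i$, it is a domain.

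Your second paragraph also has a circularity issue. You want to use injectivity of $D_i \to D \widehat{\otimes}_C D_i$, which is a rational localization of the connected \'etale $B^I_{L,E}$-algebra $D_i$; but that is precisely an instance of the lemma you are proving. You gesture at Corollary~\ref{C:nonzero by convexity}(c), but that corollary is proved only for rational localizations of $B^I_{L,E}$, not for finite \'etale covers of such. The paper handles the general rational localization $(C,C^+)\to(D,D^+)$ differently: it enlarges the original cover $\{D_i\}$ from Lemma~\ref{L:etale factorization} by adjoining a similar cover pulled back through $D$, reruns the positive-radius propagation argument on this larger cover, and then observes that some member of the enlarged cover factors through $D$, so $C \to D$ inherits injectivity.
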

\begin{proof}
Set notation as in Lemma~\ref{L:etale factorization}.
By Remark~\ref{R:etale factorization}(b), each $D_i$ is a domain; in particular, the norm map from $D_i$ to $C_i$ takes nonzero elements to nonzero elements. By Corollary~\ref{C:nonzero by convexity}, if $x \in C$ has nonzero image in $D_i$, then $\beta(x) \neq 0$ for any $\beta \in \calM(D_i)$ restricting to a point of $\calM(C_i)$ of positive radius.

Suppose that $x \in C$ maps to zero in $D_i$ for some $i$. For any $j$, if
$\calM(D_i) \cap \calM(D_j)$ is nonempty, then it contains a point restricting to a point of $\calM(C_i) \cap \calM(C_j)$ of positive radius. By the previous paragraph, this implies that $x$ also maps to zero in $D_j$. Since $C$ is connected, it follows that $x$ maps to zero in $D_i$ for all $i$; by Remark~\ref{R:etale factorization}(a), this means $x=0$. Consequently, the maps $C \to D_i$ are injective; since each $D_i$ is a domain, so then is $C$.

Let $(C,C^+) \to (D,D^+)$ be a general rational localization. Augment the collection of rational localizations described in Lemma~\ref{L:etale factorization} by the compositions of $(C,C^+) \to (D,D^+)$ with a similar collection of rational localizations of $D$.
Then the previous conclusions still apply, but now there exists an index $i$ such that $C \to D_i$ factors through $D$. It follows that $C \to D$ is itself injective.
\end{proof}

\begin{lemma} \label{L:finitely many zeroes2}
Suppose that $C$ is connected. For $x \in C$ nonzero, there are only finitely many $\beta \in \calM(C)$ for which $\beta(x) \neq 0$.
\end{lemma}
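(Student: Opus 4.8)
The plan is to reduce, using the local factorization of Lemma~\ref{L:etale factorization}, to the already-settled case of \S\ref{sec:pid}, and then to transfer finiteness across the finite \'etale step by means of a field norm. (Parallel to Lemma~\ref{L:finitely many zeroes}, the content to be proved is that the \emph{zero} locus $Z = \{\beta \in \calM(C) : \beta(x) = 0\}$ is finite; this is the form consumed by the \'etale analogue of Theorem~\ref{T:strong PID}.) First I would invoke Lemma~\ref{L:etale factorization} to obtain finitely many rational localizations $(C,C^+) \to (D_i,D_i^+)$ covering $\Spa(C,C^+)$, with each composite $(B^I_{L,E}, B^{I,+}_{L,E}) \to (D_i,D_i^+)$ factoring through a connected rational localization $(B^I_{L,E}, B^{I,+}_{L,E}) \to (C_i,C_i^+)$ followed by a finite \'etale morphism $(C_i,C_i^+) \to (D_i,D_i^+)$ with $D_i$ connected. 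By Remark~\ref{R:etale factorization}(b), $D_i$ is a Dedekind domain, hence a domain containing the principal ideal domain $C_i$ (Theorem~\ref{T:strong PID}(c)); being torsion-free and module-finite over $C_i$, it is finite free over $C_i$. By Lemma~\ref{L:domain2}, $C$ is a domain and each $C \to D_i$ is injective, so the image $x_i$ of $x$ in $D_i$ is nonzero.

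Next I would pass to norms. Set $N_i = \Norm_{D_i/C_i}(x_i) \in C_i$, the determinant of multiplication by $x_i$ on the finite free module $D_i$. Since $D_i \otimes_{C_i} \Frac(C_i)$ is a localization of the domain $D_i$, it is a domain, and being finite-dimensional over $\Frac(C_i)$ it is a field in which $x_i \neq 0$; hence $N_i \neq 0$. Applying Cayley--Hamilton to multiplication by $x_i$ shows that $x_i$ divides $N_i$ in $D_i$ (the characteristic polynomial has constant term $\pm N_i$ and is annihilated by $x_i$). Consequently, for $\beta \in \calM(D_i)$ with $\beta(x_i) = 0$ we get $\beta(N_i) = 0$, hence $\beta'(N_i) = 0$ for the restriction $\beta'$ of $\beta$ to $C_i$. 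By Lemma~\ref{L:finitely many zeroes}, applied to the connected rational localization $C_i$ and the nonzero element $N_i$, there are only finitely many such $\beta'$. Since $D_i$ is module-finite over $C_i$, the restriction map $\calM(D_i) \to \calM(C_i)$ has finite fibres --- the fibre over $\beta'$ is the Gel'fand spectrum of the finite-dimensional $\calH(\beta')$-algebra $D_i \otimes_{C_i} \calH(\beta')$, which is a finite set --- so $\{\beta \in \calM(D_i): \beta(x_i) = 0\}$ is finite. Finally, since the $(C,C^+) \to (D_i,D_i^+)$ cover $\Spa(C,C^+)$ and $D_i$ is uniform (Remark~\ref{R:etale factorization}(c)), every $\beta \in \calM(C)$ lifts to some $\calM(D_i)$, and such a lift kills $x_i$ if and only if $\beta$ kills $x$; hence $Z$ is contained in a finite union of finite sets, and is therefore finite.

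I expect the main obstacle to be the final step: checking that $\calM(C)$ is covered by the images of the $\calM(D_i)$, i.e.\ that every bounded multiplicative seminorm on $C$ extends to one of the $D_i$. This rests on the covering property $\Spa(C,C^+) = \bigcup_i \Spa(D_i,D_i^+)$ together with the identification of $\calM$ with the rank-$\leq 1$ points of the adic spectrum; one must be slightly careful about the choices of $D_i^+$, but the uniformity of $C$ and of the $D_i$ keeps this under control. The only other point requiring attention is the finiteness of the fibres of $\calM(D_i) \to \calM(C_i)$, which is routine once one recalls that a finite-dimensional algebra over a complete nonarchimedean field has finite Berkovich spectrum.
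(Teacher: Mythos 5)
Your proof is correct and follows essentially the same route as the paper's: set notation as in Lemma~\ref{L:etale factorization}, push $x$ down via the norm from $D_i$ to $C_i$, apply Lemma~\ref{L:finitely many zeroes}, and conclude using finiteness of the fibres of $\calM(D_i) \to \calM(C_i)$. You also correctly read the statement as asserting finiteness of the \emph{zero} locus (the ``$\neq 0$'' in the lemma and in the paper's proof is a typo for ``$=0$''), and your extra details (Cayley--Hamilton divisibility of the norm, freeness over the PID $C_i$, the covering of $\calM(C)$ by the images of the $\calM(D_i)$) merely make explicit steps the paper leaves implicit.
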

\begin{proof}
Set notation as in Lemma~\ref{L:etale factorization}.
By Lemma~\ref{L:domain2}, $x$ has nonzero image in each $D_i$.
Let $y_i$ be the norm of $x$ from $D_i$ to $C_i$.
By Lemma~\ref{L:finitely many zeroes}, there are only finitely many $\beta \in \calM(C_i)$ such that $\beta(y_i) \neq 0$. Since the map $\calM(D_i) \to \calM(C_i)$ has finite fibers,
there are only finitely many $\beta \in \calM(D_i)$ such that $\beta(x) \neq 0$. This proves the claim.
\end{proof}

\begin{cor} \label{C:maximal ideal one point}
For $\gothm \in \Maxspec(C)$, $C/\gothm \cong \calH(\beta)$ for some $\beta \in \calM(C)$.
(We may thus view $\Maxspec(C)$ as a subset of $\calM(C)$.)
\end{cor}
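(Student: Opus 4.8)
The plan is to reduce to the case where $C$ is a connected domain, realize $\gothm$ as the kernel of a Berkovich point, and then identify $C/\gothm$ with the associated completed residue field by bootstrapping from the primitive-of-degree-$1$ situation through the \'etale factorization of Lemma~\ref{L:etale factorization}.

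Since $C$ is noetherian (Theorem~\ref{T:strongly noetherian Robba2}), it is a finite product of connected rings, each a domain by Lemma~\ref{L:domain2}; a given $\gothm \in \Maxspec(C)$ lies in one factor and $\calM(C)$ is the disjoint union of the Gel'fand spectra of the factors, so I may assume $C$ is a connected domain. Then $\gothm$ is closed, so $\bar C := C/\gothm$ is a nonzero Banach field; choosing $\gamma \in \calM(\bar C)$ (nonempty by Berkovich) and precomposing with $C \to \bar C$ gives $\beta \in \calM(C)$ with $\ker\beta \supseteq \gothm$, hence $\ker\beta = \gothm$ by maximality. By Lemma~\ref{L:finitely many zeroes2} only finitely many points of $\calM(C)$ have a prescribed nonzero element in their kernel, so $\beta$ is one of finitely many points over $\gothm$. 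Everything now reduces to showing that $\bar C$ is complete for the norm $\gamma$ induced by $\beta$; granting this, $\calH(\beta)$ — the $\gamma$-completion of $\bar C$ — equals $\bar C$, and since a complete valued field has a one-point Gel'fand spectrum (boundedness of a multiplicative seminorm forces the comparison exponent to be $1$), such a $\beta$ is unique, so $\gothm \mapsto \beta$ embeds $\Maxspec(C)$ into $\calM(C)$. (The degenerate case $\gothm = (0)$, in which $C$ is a field, is also covered by the argument below applied to the trivial rational localization.)

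To prove completeness I would first treat the base level. Apply Lemma~\ref{L:etale factorization}: there are finitely many rational localizations $(C,C^+) \to (D_i, D_i^+)$ covering $\Spa(C,C^+)$, each factoring as a connected rational localization $(B^I_{L,E}, B^{I,+}_{L,E}) \to (C_i, C_i^+)$ followed by a finite \'etale map $(C_i, C_i^+) \to (D_i, D_i^+)$ with $D_i$ a connected Dedekind domain (Remark~\ref{R:etale factorization}(b)). These localizations cover $\Spa(C,C^+)$, hence $\calM(C)$, so $\beta$ extends to some $\beta_i \in \calM(D_i)$, whose kernel $\mathfrak{M}$ is a nonzero prime of the Dedekind domain $D_i$ (it contains the image of any nonzero element of $\gothm$, nonzero since $C \hookrightarrow D_i$ by Lemma~\ref{L:domain2}), hence maximal, with $\mathfrak{M} \cap C = \gothm$ and $\mathfrak{n} := \mathfrak{M} \cap C_i$ maximal in $C_i$. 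Now $C_i$ is a principal ideal domain in which every element is an element of $W(\gotho_L)_E$ times a unit (Theorem~\ref{T:strong PID}), so $\mathfrak{n} = (z)$ with $z \in W(\gotho_L)_E$; applying Corollary~\ref{C:primitive factors} with $L'$ a completed algebraic closure of $L$, combined with the Galois descent of Section~\ref{sec:descent} and the identification of each factor $B^I_{L',E}/(\varpi - [\overline{u}_k])$ with a perfectoid field via Definition~\ref{D:primitive of degree 1} and Lemma~\ref{L:same localization}, I would exhibit $C_i/\mathfrak{n}$ as the fixed field of a closed subgroup of the Galois group acting on one such perfectoid field; such a fixed field is closed in a complete valued field, hence complete, so $C_i/\mathfrak{n} \cong \calH(\beta')$ for some $\beta' \in \calM(C_i)$. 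Since $C_i \to D_i$ is finite, $D_i/\mathfrak{M}$ is a finite extension of $\calH(\beta')$, so it too is a complete valued field $\cong \calH(\beta_i)$, and $\bar C$ embeds as a subfield of $D_i/\mathfrak{M} = \calH(\beta_i)$ with restricted norm $\gamma$.

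It remains to deduce that $\bar C$ is closed in $\calH(\beta_i)$, which follows once $\calH(\beta_i)$ is finite over $\bar C$: a subfield $F$ of a complete valued field $E$ with $[E:F] < \infty$ is automatically complete, since its completion lies between $F$ and $E$ and is finite over $F$, forcing equality. The finiteness of $[\calH(\beta_i):\bar C] = [D_i/\mathfrak{M} : C/\gothm]$ should in turn follow from the fact that a rational localization, being topologically of finite type, induces finite residue-field extensions at closed points; concretely one verifies that $C$ (hence $D_i$) is a Jacobson ring and invokes the Nullstellensatz. I expect this finiteness of the residue extension — together with supplying the omitted details of the Galois-descent identification of $C_i/\mathfrak{n}$ — to be the main technical obstacle; the remainder is routine bookkeeping with the lemmas already established.
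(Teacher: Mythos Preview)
Your route is far more elaborate than the paper's, and the gaps you yourself flag are genuine and not easily closed with the tools at hand.

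The paper's argument is essentially two lines. You already observed, via Lemma~\ref{L:finitely many zeroes2}, that $\calM(C/\gothm)$ is \emph{finite}. The missing observation is that $\calM(C/\gothm)$ is also \emph{connected}: this is a general property of Gel'fand spectra (cited as \cite[Proposition~2.6.4]{part1}), and does not require $C/\gothm$ to be known in advance to be a complete valued field. A finite connected space is a singleton $\{\beta\}$. Now $C/\gothm$ is already a Banach ring under the quotient norm (you noted this), and since its spectral seminorm is the supremum over $\calM(C/\gothm) = \{\beta\}$, the quotient norm is equivalent to $\beta$ (cited as \cite[Theorem~2.3.10]{part1}). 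Completeness for $\beta$ is therefore automatic, and $C/\gothm \cong \calH(\beta)$ drops out with no further work.

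By contrast, your plan to prove completeness by climbing through the \'etale factorization and descending from a perfectoid field runs into two real obstacles. First, the ``Galois descent'' identification of $C_i/\mathfrak{n}$ as a fixed field is not set up in the paper: Lemma~\ref{L:primitive descent} descends \emph{ideals}, not the full residue-field structure, and Corollary~\ref{C:primitive factors} does not in general produce $L'$ Galois over $L$. Second, and more seriously, the finiteness of $[D_i/\mathfrak{M} : C/\gothm]$ that you need is exactly the relative Nullstellensatz for rational localizations, which the paper explicitly lists in Remark~\ref{R:additional properties} as an \emph{expected} property, not an established one. So you would be assuming something strictly stronger than what you are trying to prove.
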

\begin{proof}
By Lemma~\ref{L:finitely many zeroes2}, $\calM(C/\gothm)$ is a finite compact topological space; however, by
\cite[Proposition~2.6.4]{part1}, $\calM(C/\gothm)$ is also connected. It is thus a singleton set $\{\beta\}$ for some $\beta \in \calM(C)$.
By \cite[Theorem~2.3.10]{part1}, the norm on $C/\gothm$ is equivalent to $\beta$; since $C/\gothm$ is also complete, we have $C/\gothm \cong  \calH(\beta)$.
\end{proof}

\begin{remark}
If $A$ is an arbitrary commutative Banach ring whose underlying ring is a field, 
the space $\calM(A)$ need not be reduced to a point; for instance, consider $A = \QQ$ equipped with the supremum of the trivial norm and the $p$-adic norm for some prime $p$.
However, we do not know of such an example containing a topologically nilpotent unit.
\end{remark}

\begin{theorem} \label{T:Dedekind}
The ring $C$ is a direct sum of finitely many Dedekind domains.
\end{theorem}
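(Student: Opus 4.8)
The plan is to reduce the theorem to the statement that $C_{\gothm}$ is a discrete valuation ring for every $\gothm\in\Maxspec(C)$ (allowing also the degenerate case in which $C$ is a field), and then to establish this by comparing $C$ with the rings $D_i$ from Lemma~\ref{L:etale factorization} in the $\gothm$-adic topology. First, since $C$ is noetherian by Theorem~\ref{T:strongly noetherian Robba2}, the space $\operatorname{Spec} C$ has only finitely many connected components, so $C$ decomposes as a finite product of connected rings; for each factor $C_j$, the composite $B^I_{L,E}\to C\to C_j$ is étale (being a rational localization followed by an étale map), so $C_j$ again satisfies the hypotheses of \S\ref{sec:etale}. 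We may thus assume $C$ is connected; then $C$ is an integral domain by Lemma~\ref{L:domain2}, and by the usual local criterion it suffices to show that $C_{\gothm}$ is a discrete valuation ring for each $\gothm\in\Maxspec(C)$, unless $C$ is a field.

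Fix $\gothm\in\Maxspec(C)$. By Corollary~\ref{C:maximal ideal one point} there is a point $\beta\in\calM(C)$ with $C/\gothm\cong\calH(\beta)$. Every element of $C^{+}$ is integral, hence power-bounded, and $\beta$ is bounded by the spectral norm, so $\beta(f)\le 1$ for all $f\in C^{+}$; thus the rank-one valuation attached to $\beta$ is a point of $\Spa(C,C^{+})$. By Lemma~\ref{L:etale factorization} the subsets $\Spa(D_i,D_i^{+})$ cover $\Spa(C,C^{+})$, so this point lies in $\Spa(D_i,D_i^{+})$ for some $i$; since a rational localization is an open immersion of adic spaces, this produces a point of $\calM(D_i)$ restricting to $\beta$ and with the same completed residue field $\calH(\beta)$. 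Now $(C,C^{+})\to(D_i,D_i^{+})$ is a rational localization and both rings are noetherian, so Lemma~\ref{L:same localization} applies and yields $C/\gothm^{n}\cong D_i/\gothm^{n}D_i$ for all $n\ge 1$. Passing to the inverse limit, the $\gothm$-adic completion $\widehat{C_{\gothm}}$ is isomorphic to the $\gothm D_i$-adic completion of $D_i$.

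It remains to analyze the latter ring. By Remark~\ref{R:etale factorization}(b), $D_i$ is a Dedekind domain, and it is nonzero since $\Spa(D_i,D_i^{+})$ is nonempty. Because $\gothm$ is maximal, $D_i/\gothm D_i\cong C/\gothm$ is a field, so there are two possibilities. If $\gothm D_i=0$, then $D_i\cong C/\gothm^{n}$ for all $n$, forcing $\gothm^{n}=\gothm$; by Nakayama's lemma applied in $C_{\gothm}$ we get $\gothm C_{\gothm}=0$, hence $\gothm=0$ and $C$ is a field. Otherwise $\gothm D_i$ is a nonzero ideal of $D_i$ whose radical is a single maximal ideal $\mathfrak{n}$ (because $D_i/\gothm D_i$ is local), so $\gothm D_i=\mathfrak{n}^{k}$ for some $k\ge 1$ and the $\gothm D_i$-adic completion of $D_i$ equals the complete discrete valuation ring $\widehat{(D_i)_{\mathfrak{n}}}$. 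In this case $\widehat{C_{\gothm}}$ is a complete discrete valuation ring, so $C_{\gothm}$ is a noetherian local domain of dimension one whose completion is regular, hence is regular, i.e.\ a discrete valuation ring. Applying this to each factor of the original decomposition of $C$, we conclude that $C$ is a finite direct sum of Dedekind domains.

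The step requiring the most care is the transition from the geometric covering $\bigcup_i\Spa(D_i,D_i^{+})=\Spa(C,C^{+})$ to the algebraic comparison $\widehat{C_{\gothm}}\cong\widehat{(D_i)_{\mathfrak{n}}}$: one must check that a given maximal ideal of $C$ genuinely extends to one of the rational localizations $D_i$ (via the rank-one point associated to $\beta$ together with Lemma~\ref{L:same localization}), and correctly dispose of the degenerate possibilities in which $\gothm D_i=0$ or $D_i$ is a field. Everything else is a standard descent of regularity along the faithfully flat map to the completion together with the local characterization of Dedekind domains.
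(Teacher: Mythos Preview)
Your proof is correct and follows essentially the same route as the paper: reduce to the connected case via Lemma~\ref{L:domain2}, use Corollary~\ref{C:maximal ideal one point} to attach a point $\beta\in\calM(C)$ to each maximal ideal, invoke Lemma~\ref{L:same localization} to identify the $\gothm$-adic completion of $C$ with that of one of the $D_i$ from Lemma~\ref{L:etale factorization}, and then appeal to Remark~\ref{R:etale factorization}(b). Your write-up simply spells out more of the details (the passage from $\beta$ to a point of $\Spa(C,C^{+})$, the degenerate case $\gothm D_i=0$, and the descent of regularity from the completion) that the paper leaves implicit; one small slip is the parenthetical ``a rational localization followed by an \'etale map'' for $B^I_{L,E}\to C\to C_j$, where the order is reversed, but since open immersions are \'etale the conclusion is unaffected.
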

\begin{proof}
Since $C$ is noetherian by Theorem~\ref{T:strongly noetherian Robba2},
we may reduce to the case where $C$ is connected, and hence an integral domain by Lemma~\ref{L:domain2}.
It thus remains to prove that for any maximal ideal $\gothm$ of $C$, the local ring
$C_\gothm$ is principal. Since $C$ is noetherian, we may check this after completion;
by Corollary~\ref{C:maximal ideal one point} and Lemma~\ref{L:same localization}, this completion remains unchanged after replacing $C$ with $D$ for any rational localization $(C,C^+) \to (D,D^+)$ such that $\gothm D \neq D$.
We may thus deduce the claim from Lemma~\ref{L:etale factorization}
and Remark~\ref{R:etale factorization}(b).
\end{proof}

\begin{remark}
From Lemma~\ref{L:domain2}, Theorem~\ref{T:Dedekind}, and the fact that any torsion-free module over a Dedekind domain is flat, it follows that for any rational localization $(C,C^+) \to (D,D^+)$, the map $C \to D$ is flat.
On the other hand, since $C$ is strongly noetherian, we may apply \cite[Lemma~1.7.6]{huber} to deduce the same conclusion even when $(C,C^+) \to (D,D^+)$ is \'etale.
\end{remark}

\begin{remark} \label{R:additional properties}
In general, the properties of $C$ are analogous to the properties of one-dimensional affinoid algebras over a field (compare Remark~\ref{R:geometric analogy}). By this analogy, we expect the following additional properties to hold.
\begin{itemize}
\item
The points of $\Spa(C,C^+)$ can be classified into types 1--5 by analogy with the points of an analytic curve over a field, with the points of types 1--4 appearing in $\calM(C)$,
and points of $\Maxspec(C)$ giving rise to points of $\calM(C)$ of type 1.
(Compare \cite[Theorem~8.17]{kedlaya-witt}.)
\item
If $C^+$ equals $C^\circ$ (the ring of power-bounded elements of $C$), then for any rational localization $(C,C^+) \to (D,D^+)$, one also has $D^+ = D^\circ$. (Compare \cite[Lemma~2.5.9(d)]{part1}.)
\item
A finite collection of rational subspaces of $\Spa(C,C^+)$ forms a covering 
if and only if the intersections with $\Maxspec(C)$ do so.
Consequently, for $(C,C^+) \to (D,D^+)$ a rational localization corresponding to the subspace $U$ of $\Spa(C,C^+)$, $D$ depends only on $U \cap \calM(C)$.
(Compare \cite[Lemma~2.5.12, Corollary~2.5.13]{part1}.)
\item
Relative Nullstellensatz: for any $n \geq 0$ and any
$\gothm \in \Maxspec(C\{T_1,\dots,T_n\})$, $\gothm \cap C \in \Maxspec(C)$.
(This fails if $C\{T_1,\dots,T_n\}$ is replaced by $C\{T_1/\rho_1,\dots,T_n/\rho_n\}$.)
\item
For any nonnegative integer $n$ and any $\rho_1,\dots,\rho_n>0$, the ring
$C\{T_1/\rho_1,\dots,T_n/\rho_n\}$ is regular and excellent. (Compare Remark~\ref{R:regular excellent}.)
\end{itemize}
\end{remark}

\end{document}